\documentclass[11pt]{article}
\usepackage{amsmath}
\usepackage{amsfonts}
\usepackage{amsthm}
\usepackage{amssymb, setspace}
\usepackage{mathtools}
\usepackage{lipsum}
\usepackage{color,graphicx,epsfig,geometry,fancyhdr,hyperref}
\usepackage[T1]{fontenc}
\usepackage{float}
\usepackage{subfig}
\usepackage{commath}
\usepackage{bbm}
\usepackage{mathrsfs,fleqn}
\usepackage{pgfplots}
\pgfplotsset{compat=newest}
\newtheorem{theorem}{Theorem}[section]

\newtheorem{lemma}{Lemma}[section]

\newcommand{\N}{\mathbb{N}}

\newcommand{\R}{\mathbb{R}}
\newcommand{\C}{\mathbb{C}}

\usepackage{ulem}

\graphicspath{{paper-pics/}}

%%%%%%%%%%%%%%%%%%%%%%%%%%%%%%%%%%%%%%%%%
\begin{document}
%\lhead{}
%\rhead{}

\begin{flushleft}
\Large 
\noindent{\bf \Large The anisotropic interior transmission eigenvalue problem with a conductive boundary}
\end{flushleft}

\vspace{0.2in}
{\bf  \large Victor Hughes and Isaac Harris}\\
\indent {\small Department of Mathematics, Purdue University, West Lafayette, IN 47907 }\\
\indent {\small Email: \texttt{vhughes@purdue.edu} and \texttt{harri814@purdue.edu} }\\

{\bf  \large Jiguang Sun}\\
\indent {\small Department of Mathematics, Michigan Technological University, Houghton, MI 49931 }\\
\indent {\small Email: \texttt{jiguangs@mtu.edu} }\\
%%%%%%%%%%%%%%%%%%%%%%%%%%%%%%%%%%%%%%%%%%%%%%%%%%%%%%%%%%%%%%%%%

\begin{abstract}
In this paper, we study the transmission eigenvalue problem for an anisotropic material with a conductive boundary. We prove that the transmission eigenvalues for this problem exist and are at most a discrete set. We also study the dependence of the transmission eigenvalues on the physical parameters and prove that the first transmission eigenvalue is monotonic. We then consider the limiting behavior of the transmission eigenvalues as the conductive boundary parameter $\eta$ vanishes or goes to infinity in magnitude. Finally, we provide some numerical examples on three different domains to demonstrate our theoretical results. 
\end{abstract}

\noindent{\bf Keywords:} Transmission Eigenvalues; Inverse Scattering; Anisotropic Media\\

\noindent{\bf MSC:} 35P25, 35J30, 65N30, 65N15
%%%%%%%%%%%%%%%%%%%%%%%%%%%%%%%%%%%%%%%%%%%

\section{Introduction}\label{intro}
In this paper, we study the transmission eigenvalue problem for an anisotropic material with a conductive boundary. This can be seen as covering the boundary of the scatterer with a thin highly conductive layer. This is modeled by assuming that there is a jump across the boundary of the scatterer with respect to the `normal' derivatives of the total field. Transmission eigenvalues are a relevant area of research due to the fact that they can be used to retrieve information about the material properties of the scattering object (see e.g. \cite{cavities,cakoni2014homogenization,GP,te-cbc3}). It is well known that the transmission eigenvalues can be recovered by the scattering data \cite{far field data,te-cbc2,armin}. Therefore, these eigenvalues can be used as a target signature, i.e., one can determine defects in a material from the measured scattering data. The relevant inverse problem is, given the eigenvalues, determine/estimate parameters in the differential operator. We analyze the dependence of the transmission eigenvalues on the conductivity parameter. 

As we shall see, the eigenvalue problem is non-self-adjoint and non-linear. The standard theory for eigenvalues for an elliptic operator does not apply. This makes the study of the transmission eigenvalues interesting and challenging mathematically. We refer to \cite{corner-nonscatt,aniso-nonscatt,salo-nonscatt} on some recent work connecting the transmission eigenvalues to non-scattering frequencies as well as \cite{fem-te,JiSun2013,mfs-te,mfs-anisote,eig-FEM-book} for some numerical algorithms to compute the eigenvalues. Here we study the discreteness, existence, and dependence on the parameters for real transmission eigenvalues.

Now we introduce the scattering problem associated with the transmission eigenvalues of interests. To this end, let $D \subset \mathbb{R}^d$ for $d=$ 2 or 3 be a simply connected open set with Lipschitz boundary $\partial D$ and $\nu$ be the unit outward normal vector. The region $D$ denotes the scatterer that we illuminate with an incident plane wave $u^i:=\text{e} ^{\text{i} kx\cdot\hat{y}}$ such that $k>0$ is the wave number and $\hat{y} \in \mathbb{S}^{d-1}$ denotes the incident direction. Let $A(x) \in L^\infty (D, \mathbb{R}^{d\times d})$ be a symmetric matrix valued function that is uniformly positive definite in $D$ satisfying 
\begin{equation*}
0< A_{\text{min}}=\inf\limits_{x\in D} \inf\limits_{|\xi|=1} \overline{\xi} \cdot A(x)\xi  \quad \text{and}\quad A_{\text{max}}=\sup\limits_{x\in D} \sup\limits_{|\xi|=1} \overline{\xi} \cdot A(x)\xi .
\end{equation*}
We also assume that the refractive index $n(x) \in L^\infty (D)$ such that
$$0< n_{\text{min}} \leq n(x) \leq n_{\text{max}} \quad \text{ a.e. in } \,\, D.$$
Lastly, let the conductivity parameter $\eta \in L^\infty(\partial D)$ satisfy
$$ \eta_{\text{min}} \leq \eta(x) \leq \eta_{\text{max}} \quad \text{ a.e. on } \,\, \partial D.$$

The direct scattering problem for an anisotropic material with a conductive boundary condition is formulated as follows: find $u^s \in H^1_{loc}(\mathbb{R}^d)$ such that
\begin{align}
 \Delta u^s + k^2u^s = 0 \quad \text{in $\mathbb{R}^d \setminus \overline{D}$} \quad \text{ and } \quad  \nabla \cdot A\nabla u+k^2nu=0 \quad &\text{in $D$}, \label{direct:1} \\ 
(u^s+u^i)^+ = u^-  \quad \text{ and } \quad  \partial_\nu (u^s+u^i)^+ = \nu \cdot A\nabla u^- - \eta u \quad &\text{on $\partial D$}, \label{direct:2} 
\end{align}
where $u = u^s+u^i$ denotes the total field. Here the superscripts $+$ and $-$ demonstrate approaching the boundary from the outside and inside of $D$, respectively. The scattered field satisfies the radiation condition 
$$\partial_r u^s -\text{i} ku^s = \mathcal{O}\Big(\frac{1}{r^{(d+1)/2}}\Big)  \quad \text{ as } \quad r \to \infty$$
uniformly with respect to $\hat{x}=\frac{x}{r}$ where $r=|x|$. Similar arguments as in \cite{fmconductbc} lead to the well-posedness of the direct problem.

If there exists a general incident field satisfying the Helmholtz equation in  $\R^d$ that does not produce a scattered field on the exterior of $D$, then, by \eqref{direct:1}--\eqref{direct:2}, one has that $w=u^s+u^i$ and $v=u^i$ are in $H^1(D)$ satisfying 
\begin{align}
    \Delta v + k^2v = 0 \quad &\text{ and } \quad \nabla \cdot A\nabla w+k^2nw=0 \quad \text{ in } D, \label{TE:1} \\ 
    w = v \quad &\text{ and } \quad \nu \cdot A\nabla w = \partial_\nu v + \eta v \quad \text{on $\partial D$.} \label{TE:2}
\end{align}
The values $k \in \mathbb{C}$ such that there exists a nontrivial solution $(w,v)\in H^1(D) \times H^1(D)$ to \eqref{TE:1}--\eqref{TE:2} are called transmission eigenvalues.

The rest of the paper is organized as follows. First, we prove that the transmission eigenvalues form an at most discrete set in the complex plane. This is done by considering an equivalent variational formulation of the problem and appealing to the analytic Fredholm theorem (see e.g. \cite{CCH-book}). We then prove the existence of infinitely many real transmission eigenvalues. It is shown that the eigenvalues are monotone with respect to the coefficients. This implies that the eigenvalues can be used to determine defects in the scatterer. The limiting cases as $\eta \to 0$ as well as $\eta \to \pm \infty$ are considered. In either case, the limiting value for the transmission eigenpairs can be determined. Lastly, we provide some numerical validation for the theoretical results. 

%%%%%%%%%%%%%%%%%%%%%%%%%%%%%%%%%%%%%%%%%%%%%%%
\section{Discreteness of Transmission Eigenvalues} \label{section-discr}
In this section, we study the discreteness of the transmission eigenvalues (TEVs) for an anisotropic scatterer with a conductive boundary condition. This problem has been studied for the case when $A = I$ in \cite{te-cbc,te-cbc2,te-cbc3}. Also, see \cite{two-eig-cbc,electro-cbc} the study of this problem associated with an electromagnetic scatterer. This result is useful in application since many qualitative reconstruction methods such as the factorization method \cite{fm-waveguide,regfm2,periodictevs,kirschbook} and generalized linear sampling methods \cite{GLSM,glsm-cracks,CCH-book} fail to reconstruct the scatterer $D$ from measured data if $k$ is a TEV. Here we study an equivalent variational formulation of \eqref{TE:1}--\eqref{TE:2} to treat the case of  $A \neq I$ and $\eta \neq 0$, which is not covered in \cite{te-cbc,CCH-book,te-geo-paper1,te-cbc2,te-cbc3}. We refer the readers to \cite{cavities,On-the-interior-TE,ayala2022analysis,te-geo-paper1,te-cbc3} for discreteness results of other TEV problems.   

%%%%%%%%%%%%% First Case
We begin with the case when $A_{\text{min}}-1>0$ and $\eta_{\text{max}}<0$. Multiplying the second equation in \eqref{TE:1} by $\overline{\phi} \in H^1(D)$ and appealing to Green's 1st theorem, we have that 
\begin{equation*}
\int_D \nabla \overline{\phi} \cdot A\nabla w-k^2nw\overline{\phi} \,\text{d}x\ = \int_{\partial D} (\nu \cdot A\nabla w)\overline{\phi} \,\text{d}s.
\end{equation*}
By applying the boundary conditions in \eqref{TE:2} we have that
$$0=\int_D \nabla \overline{\phi} \cdot A\nabla w-k^2nw\overline{\phi} \,\text{d}x\ - \int_{\partial D} \overline{\phi}\big(T_kw+ \eta w \big) \,\text{d}s \quad \text{for all} \,\, \phi \in H^1(D),$$
where the Dirichlet-to-Neumann(DtN) map for the Helmholtz equation is defined as
\begin{equation*}
    T_k : H^{1/2}(\partial D) \longrightarrow  H^{-1/2}(\partial D) \quad \text{given by} \quad T_k f:=\partial_\nu v
\end{equation*}
such that 
\begin{equation*}
 \Delta v +k^2v = 0 \quad \text{in } D \quad \text{and} \quad v  = f \quad \text{on } \partial D.
\end{equation*}
Note that $T_k$ is well-defined for all complex values $k^2$ that are not Dirichlet eigenvalues of the negative Laplacian in $D$. Moreover, $T_k$ as an operator depends analytically on $k$ for all values for which it is well-defined \cite{BEM-FEMtransproblem}. 

We now define the bounded sesquilinear form $a_k(\cdot \, , \cdot ): H^1(D) \times H^1(D) \mapsto \C$ such that 
\begin{equation}
a_k(w,\phi):=\int_D \nabla\overline{\phi} \cdot A\nabla w -k^2nw\overline{\phi} \,\text{d}x\ - \int_{\partial D} \overline{\phi}\big(T_kw+ \eta w \big) \,\text{d}s. \label{varforma:1}
\end{equation}
It is clear that $k$ is a TEV if and only if there is a non-trivial $w$ satisfying $a_k(w,\phi)=0$ for all $\phi \in H^1(D)$, provided that $k^2$ is not a Dirichlet eigenvalue of the negative Laplacian in $D$. We denote the set of Dirichlet eigenvalues as $\{ \lambda_j (D)\}_{j=1}^\infty$. 

To prove the discreteness, we appeal to the analytic Fredholm theorem. We first show that the sesquilinear form associated with the transmission eigenvalue problem is represented by a Fredholm operator with index zero that depends on $k$ analytically in an open subset of the complex plane. To this end, let $a_k(\cdot,\cdot) = b(\cdot,\cdot) + c_k(\cdot,\cdot)$, where
\begin{equation}
b(w,\phi)= \int_D   \nabla\overline{\phi} \cdot A\nabla w  +A_{\text{min}}w\overline{\phi} \,\text{d}x\ - \int_{\partial D} \overline{\phi}T_ {\text{i}} w \,\text{d}s. \label{varformb:1}
\end{equation}
Here $T_{\text{i}} $ is the DtN mapping with $k = \text{i}$ and
\begin{equation}
    c_k(w,\phi)= -\int_D \big( k^2n+A_{\text{min}} \big) w \overline{\phi} \,\text{d}x\ - \int_{\partial D} \overline{\phi}(T_k-T_{\text{i}} )w+ \eta \overline{\phi}w  \,\text{d}s.\label{varformc:1}
\end{equation}
In the next few lemmas, we show that $a_k(\cdot,\cdot)$, $b(\cdot,\cdot)$, and $c_k(\cdot,\cdot)$ have analytical properties that allow us to use the analytic Fredholm theorem.

\begin{lemma}
Let $b(\cdot,\cdot)$  and $c_k(\cdot,\cdot)$ be given as in \eqref{varformb:1} and \eqref{varformc:1}, respectively. If we assume $A_{min}-1>0$ then $b(\cdot,\cdot)$ is coercive and  $c_k(\cdot,\cdot)$ is compact and analytic with respect to $k$ provided that $k^2 \in \C \setminus \{ \lambda_j (D)\}_{j=1}^\infty$.
\end{lemma}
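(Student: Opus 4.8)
The plan is to verify the three claims—coercivity of $b$, and compactness and analyticity of $c_k$—separately, since the sesquilinear form has been split precisely so that each piece is tractable.

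First I would establish coercivity of $b(\cdot,\cdot)$. Writing $b(w,w) = \int_D \nabla\overline{w}\cdot A\nabla w + A_{\text{min}}|w|^2\,\text{d}x - \int_{\partial D}\overline{w}\,T_{\text{i}}w\,\text{d}s$, I would use the uniform positive definiteness of $A$ to bound $\int_D \nabla\overline{w}\cdot A\nabla w\,\text{d}x \geq A_{\text{min}}\|\nabla w\|^2_{L^2(D)}$, so that the two volume terms together dominate $A_{\text{min}}\|w\|^2_{H^1(D)}$. The key structural fact is that $-T_{\text{i}}$ is a nonnegative (in fact coercive on the quotient, but nonnegativity suffices here) operator: for $f = w|_{\partial D}$ with $v$ the $H^1(D)$ solution of $\Delta v - v = 0$, $v=f$ on $\partial D$, Green's identity gives $\int_{\partial D}\overline{f}\,T_{\text{i}}f\,\text{d}s = -\int_D |\nabla v|^2 + |v|^2\,\text{d}x \leq 0$, hence $-\int_{\partial D}\overline{w}\,T_{\text{i}}w\,\text{d}s \geq 0$. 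Therefore $b(w,w) \geq A_{\text{min}}\|w\|^2_{H^1(D)}$, which is coercivity (here is where $A_{\text{min}} > 1$, equivalently $A_{\text{min}} - 1 > 0$, will actually be used downstream to absorb the $k^2 n$ term, but for coercivity of $b$ alone one only needs $A_{\text{min}} > 0$).

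Next I would handle $c_k(\cdot,\cdot)$. The volume term $-\int_D(k^2 n + A_{\text{min}})w\overline{\phi}\,\text{d}x$ is compact because the embedding $H^1(D)\hookrightarrow L^2(D)$ is compact (Rellich–Kondrachov), so the associated operator factors through $L^2(D)$. For the boundary terms, I would use that the trace operator $H^1(D)\to H^{1/2}(\partial D)$ is bounded and that the embedding $H^{1/2}(\partial D)\hookrightarrow L^2(\partial D)$ is compact; the multiplication by $\eta\in L^\infty(\partial D)$ is bounded on $L^2(\partial D)$, giving compactness of the $\eta$-term. For the term $\int_{\partial D}\overline{\phi}(T_k - T_{\text{i}})w\,\text{d}s$, the point is that although $T_k$ and $T_{\text{i}}$ are each only bounded $H^{1/2}\to H^{-1/2}$, their difference is smoothing: $T_k - T_{\text{i}}$ maps $H^{1/2}(\partial D)$ into $H^{1/2}(\partial D)$ (it corresponds to the Neumann trace of $v_k - v_{\text{i}}$ where the difference solves a Helmholtz-type equation with $L^2(D)$ right-hand side $(k^2+1)v_{\text{i}}$, hence has extra regularity), so composed with the compact embedding $H^{1/2}(\partial D)\hookrightarrow H^{-1/2}(\partial D)$ — or more simply, estimated via the compact trace embedding — it yields a compact operator on $H^1(D)$. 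Summing, $c_k$ is compact for each fixed $k$ with $k^2 \notin \{\lambda_j(D)\}$.

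Finally, analyticity: $c_k$ depends on $k$ only through the scalar $k^2$ (affine in the volume term) and through $T_k$, and the excerpt already records that $T_k$ depends analytically on $k$ wherever it is defined, i.e. for $k^2 \in \C\setminus\{\lambda_j(D)\}$; since finite sums and compositions with fixed bounded operators preserve operator-analyticity, $k\mapsto c_k$ is analytic on that set. The main obstacle I anticipate is the rigorous justification that $T_k - T_{\text{i}}$ is the compact (smoothing) part rather than $T_k$ itself — one must be careful to exploit the cancellation of the principal symbols of the two DtN maps and quantify the gain in regularity of the difference, rather than naively bounding each DtN map separately (which would only give boundedness, not compactness). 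Everything else is a routine application of Rellich's compact embedding and the trace theorem.
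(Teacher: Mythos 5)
Your treatment of the compactness and analyticity of $c_k(\cdot,\cdot)$ is correct and matches the paper's argument (compact embeddings $H^1(D)\hookrightarrow L^2(D)$ and $H^{1/2}(\partial D)\hookrightarrow L^2(\partial D)$, compactness of $T_k-T_{\mathrm{i}}$, and analyticity of $k\mapsto T_k$ away from the Dirichlet eigenvalues). However, the coercivity argument for $b(\cdot,\cdot)$ contains a sign error that invalidates it. You claim that $\int_{\partial D}\overline{f}\,T_{\mathrm{i}}f\,\mathrm{d}s=-\int_D|\nabla v|^2+|v|^2\,\mathrm{d}x\le 0$. In fact, with $v$ solving $\Delta v - v=0$ and $v=f$ on $\partial D$, Green's first identity gives
\begin{equation*}
\int_{\partial D}\overline{v}\,\partial_\nu v\,\mathrm{d}s=\int_D|\nabla v|^2+\overline{v}\,\Delta v\,\mathrm{d}x=\int_D|\nabla v|^2+|v|^2\,\mathrm{d}x=\|v\|^2_{H^1(D)}\ \ge\ 0,
\end{equation*}
so $I:=\int_{\partial D}\overline{w}\,T_{\mathrm{i}}w\,\mathrm{d}s\ge 0$ and the boundary contribution $-I$ to $b(w,w)$ is \emph{nonpositive}: it works against coercivity and cannot simply be discarded. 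Consequently your conclusion $b(w,w)\ge A_{\min}\|w\|^2_{H^1(D)}$, and your remark that $A_{\min}>0$ alone would yield coercivity of $b$, are both wrong.

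The missing ingredient is an upper bound on $I$ in terms of $w$. Applying Green's identity once more with $w$ as the test function and using $\Delta v=v$,
\begin{equation*}
I=\int_{\partial D}\overline{w}\,\partial_\nu v\,\mathrm{d}s=\int_D\nabla\overline{w}\cdot\nabla v+\overline{w}v\,\mathrm{d}x\le\|v\|_{H^1(D)}\|w\|_{H^1(D)}=\sqrt{I}\,\|w\|_{H^1(D)},
\end{equation*}
whence $I\le\|w\|^2_{H^1(D)}$. Combining this with $\int_D\nabla\overline{w}\cdot A\nabla w+A_{\min}|w|^2\,\mathrm{d}x\ge A_{\min}\|w\|^2_{H^1(D)}$ gives $b(w,w)\ge(A_{\min}-1)\|w\|^2_{H^1(D)}$, which is exactly where the hypothesis $A_{\min}-1>0$ is consumed. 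So that hypothesis is essential for the coercivity of $b$ itself, not merely for absorbing the $k^2 n$ term downstream as you suggest.
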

\begin{proof}
We first estimate the boundary term in the variational form. By Green's 1st theorem and the fact that $v=w$ on $\partial D$, it holds that
 $$I = \int_{\partial D} \overline{w}T_{\text{i}} w \,\text{d}s\ = \int_{\partial D} \overline{v}\partial_\nu v \,\text{d}s\ = \int_{D} |\nabla v|^2+|v|^2 \,\text{d}x\ = ||v||^2_{H^1(D)} .$$
The Cauchy-Schwartz inequality implies that
$$I = \int_{\partial D} \overline{w}\partial_\nu v \,\text{d}s\ = \int_{D} \nabla\overline{w} \cdot \nabla v+\overline{w}v \,\text{d}x\leq ||v||_{H^1(D)}||w||_{H^1(D)} = \sqrt{I}||w||_{H^1(D)} .$$
Consequently, $I\leq ||w||^2_{H^1(D)}$.
Then $b(\cdot,\cdot)$ is coercive since
 $$b(w,w) = \int_D \nabla \overline{w} \cdot A\nabla w+A_{\text{min}}|w|^2 \,\text{d}x\ - \int_{\partial D} \overline{w}T_{\text{i}} w \,\text{d}s \geq \big(A_{\text{min}}-1 \big)||w||^2_{H^1(D)} .$$
    
 By the compact embeddings of $H^1(D)$ into $L^2(D)$ and $H^{1/2}(\partial D)$ into $L^2(\partial D)$ as well as the fact that $T_k-T_{\text{i}}$ is compact(see for e.g. \cite{BEM-FEMtransproblem}), there exists a compact operator $F_k :H^1(D) \to  H^1(D)$ such that $c_k(w,\phi)=(F_kw,\phi)_{H^1(D)}$ for all $w,\phi \in H^1(D)$ due to the Riesz representation theorem. Furthermore, $T_k$ is analytic provided that $k^2$ is not a  Dirichlet eigenvalue of the Laplacian in $D$. Consequently, $F_k$ is analytic with respect to $k^2 \in \C \setminus \{ \lambda_j (D)\}_{j=1}^\infty$, proving the claim.
\end{proof}

Since $b(\cdot,\cdot)$ is bounded and coercive, there exists an invertible operator $B:H^1(D) \to H^1(D)$ such that $b(w,\phi)=(Bw,\phi)_{H^1(D)}$ for all $w,\phi \in H^1(D)$. Hence the operator $B+F_k$ represents the sesquilinear form $a_k(\cdot \, , \cdot )$ which is Fredholm and analytic with respect to $k^2 \in  \C \setminus \{ \lambda_j (D)\}_{j=1}^\infty$. Next we show that  $a_k(\cdot \, , \cdot )$ is injective. 

\begin{lemma}
 Let $a_k(\cdot,\cdot)$ be given as in \eqref{varforma:1} with $A_{\text{min}}-1>0$ and $\eta_{\text{max}}<0$ then $a_0(\cdot,\cdot)$ is coercive (i.e. injective).
\end{lemma}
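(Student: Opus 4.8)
The plan is to establish a lower bound of the form $a_0(w,w)\ge C\|w\|^2_{H^1(D)}$ with $C>0$ for all $w\in H^1(D)$; coercivity of a bounded sesquilinear form then yields, via Lax--Milgram, an invertible (in particular injective) representing operator. Setting $k=0$ in \eqref{varforma:1} and taking $\phi=w$ leaves
$$a_0(w,w)=\int_D \nabla\overline w\cdot A\nabla w\,\text{d}x-\int_{\partial D}\overline w\,T_0 w\,\text{d}s-\int_{\partial D}\eta|w|^2\,\text{d}s,$$
which is meaningful because $0\notin\{\lambda_j(D)\}_{j=1}^\infty$, so the DtN map $T_0$ (with $k=0$) is well defined. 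The volume term is bounded below by $A_{\text{min}}\|\nabla w\|^2_{L^2(D)}$ by the uniform positive definiteness of $A$.

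For the boundary DtN term I would reuse the argument from the proof of the preceding lemma, now with the harmonic extension in place of the $k=\text{i}$ extension. Let $v_0\in H^1(D)$ solve $\Delta v_0=0$ in $D$ with $v_0=w$ on $\partial D$. Green's first identity gives $\int_{\partial D}\overline w\,T_0 w\,\text{d}s=\int_D|\nabla v_0|^2\,\text{d}x\ge 0$ and also $\int_{\partial D}\overline w\,T_0 w\,\text{d}s=\int_D\nabla\overline w\cdot\nabla v_0\,\text{d}x$, so by Cauchy--Schwarz $\int_{\partial D}\overline w\,T_0 w\,\text{d}s\le\|\nabla v_0\|_{L^2(D)}\|\nabla w\|_{L^2(D)}$ and hence $\int_{\partial D}\overline w\,T_0 w\,\text{d}s\le\|\nabla w\|^2_{L^2(D)}$. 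Since $\eta_{\text{max}}<0$ we have $-\eta(x)\ge|\eta_{\text{max}}|>0$ a.e.\ on $\partial D$, so $-\int_{\partial D}\eta|w|^2\,\text{d}s\ge|\eta_{\text{max}}|\,\|w\|^2_{L^2(\partial D)}$. Combining the three estimates,
$$a_0(w,w)\ge(A_{\text{min}}-1)\|\nabla w\|^2_{L^2(D)}+|\eta_{\text{max}}|\,\|w\|^2_{L^2(\partial D)}.$$

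To conclude I would invoke the standard fact that $w\mapsto\big(\|\nabla w\|^2_{L^2(D)}+\|w\|^2_{L^2(\partial D)}\big)^{1/2}$ defines a norm on $H^1(D)$ equivalent to $\|\cdot\|_{H^1(D)}$: if not, a sequence $w_n$ normalized in $H^1(D)$ with $\|\nabla w_n\|^2_{L^2(D)}+\|w_n\|^2_{L^2(\partial D)}\to 0$ would, using the compact embedding $H^1(D)\hookrightarrow L^2(D)$ together with compactness of the trace map $H^1(D)\to L^2(\partial D)$, have a subsequence converging strongly in $H^1(D)$ to a constant function whose trace vanishes, namely $0$, contradicting $\|w_n\|_{H^1(D)}=1$. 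Since $A_{\text{min}}-1>0$ and $|\eta_{\text{max}}|>0$, the displayed inequality then gives $a_0(w,w)\ge c\,\|w\|^2_{H^1(D)}$ for a suitable $c>0$, so $a_0(\cdot,\cdot)$ is coercive and therefore $w\mapsto a_0(w,\cdot)$ is injective.

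The only non-routine point is the last step. The volume estimate alone controls $\|\nabla w\|_{L^2(D)}$ but not $\|w\|_{L^2(D)}$, and it is precisely the strict sign condition $\eta_{\text{max}}<0$ that supplies the boundary control needed to run the trace/Poincar\'e compactness argument; without it the quadratic form would degenerate on the constants and coercivity would fail.
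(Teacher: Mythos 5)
Your proof is correct and follows essentially the same route as the paper: bound the volume term below by $A_{\text{min}}\|\nabla w\|^2_{L^2(D)}$, use the harmonic extension and Green's identity with Cauchy--Schwarz to get $\int_{\partial D}\overline{w}\,T_0w\,\mathrm{d}s\le\|\nabla w\|^2_{L^2(D)}$, use $\eta_{\text{max}}<0$ for the boundary term, and conclude via the equivalence of $\|\nabla\cdot\|^2_{L^2(D)}+\|\cdot\|^2_{L^2(\partial D)}$ with the $H^1(D)$-norm. The only difference is that you prove the norm equivalence by a compactness argument where the paper simply cites it, which is a welcome addition rather than a change of approach.
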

\begin{proof}
We first notice that for $k=0$
$$a_0(w,w)= \int_D \nabla \overline{w} \cdot A\nabla w \,\text{d}x\ - \int_{\partial D} \overline{w}T_0w+ \eta |w|^2 \,\text{d}s.$$
By the boundary condition $v=w$ on $\partial D$ we obtain that 
$$I = \int_{\partial D} \overline{w}T_0w \,\text{d}s\ = \int_{\partial D} \overline{v}\partial_\nu v \,\text{d}s\ = \int_{D} |\nabla v|^2 \,\text{d}x\ = ||\nabla v||^2_{L^2(D)} .$$
We conclude that $I\leq ||\nabla w||^2_{L^2(D)}$.
Therefore, $a_0(\cdot,\cdot)$ is coercive (injective) due to
$$a_0(w,w) \geq \big(A_{\text{min}}-1 \big)||\nabla w||^2_{L^2(D)} - \eta_{\text{max}}||w||^2_{L^2(\partial D)}.  $$
This proves coercivity by the equivalence of norms $||\cdot||^2_{H^1(D)}$ and  $||\nabla \cdot||^2_{L^2(D)} + ||\cdot||^2_{L^2(\partial D)}$ in $H^1(D)$(see for e.g. \cite{salsa}) as well as the fact that $\eta_{\text{max}}<0$.
\end{proof}

%%%%%%%%%%%%% Second Case
To continue, we now consider the case when $A_{\text{max}}-1<0$ and $\eta_{\text{min}}>0$. Similar to the previous case, we study the equivalent variational form for the TEV problem. Multiplying the first equation in \eqref{TE:1} by the conjugate of $\phi \in H^1(D)$ and appealing to Green's 1st theorem we have that 
 $$\int_D \nabla \overline{\phi} \cdot \nabla v-k^2v\overline{\phi} \,\text{d}x\ = \int_{\partial D} \overline{\phi}\partial_\nu v \,\text{d}s.$$
Define the DtN map
\begin{equation*}
 S_k : H^{1/2}(\partial D) \longrightarrow H^{-1/2}(\partial D) \quad \text{given by} \quad S_k f:=\nu \cdot A\nabla w
\end{equation*}
for the boundary value problem
\begin{equation*}
 \nabla A\nabla w +k^2 n w = 0 \quad \text{in } D \quad \text{and} \quad w  = f \quad \text{on } \partial D.
\end{equation*}
Provided that $k^2$'s  are not Dirichlet eigenvalues of $-n^{-1}\nabla \cdot A\nabla$ for $D$, the DtN mapping is a well-defined bounded linear operator. Denote by $\{ \lambda_j (D,A,n)\}_{j=1}^\infty$ the set of Dirichlet eigenvalues for the operator $-n^{-1}\nabla \cdot A\nabla$ for the region $D$. If $k^2 \in \C \setminus \{ \lambda_j (D,A,n)\}_{j=1}^\infty$ then $S_k$ is analytic with respect to $k$. 

%With this, we have that the variational formulation is $a_k(v,\phi)=0$ for all $\phi \in H^1(D)$. 
We can now write $a_k(\cdot \, , \cdot ): H^1(D) \times H^1(D) \mapsto \C$ as
\begin{equation}
a_k(v,\phi):= \int_D \nabla \overline{\phi} \cdot \nabla v-k^2v\overline{\phi} \,\text{d}x\ - \int_{\partial D} \overline{\phi}\big(S_kv- \eta v \big) \,\text{d}s \label{varforma:2}
\end{equation}
and decomposed it into $a_k(\cdot,\cdot) = b(\cdot,\cdot) + c_k(\cdot,\cdot)$, where
\begin{equation}
    b(v,\phi)= \int_D \nabla \overline{\phi} \cdot \nabla v+v\overline{\phi} \,\text{d}x\ - \int_{\partial D} \overline{\phi}S_0v \,\text{d}s \label{varformb:2}
\end{equation}
and
\begin{equation}
    c_k(v,\phi)= -\int_D \big( k^2+1 \big) v\overline{\phi} \,\text{d}x\ - \int_{\partial D} \overline{\phi}(S_k-S_0)v- \eta v \overline{\phi}\,\text{d}s. \label{varformc:2}
\end{equation}
Note that both $b(\cdot,\cdot)$ and $c_k(\cdot,\cdot)$ are bounded sesquilinear forms on $H^1(D) \times H^1(D)$. In the next few lemmas, we show that $a_k(\cdot,\cdot)$ can be represented by an operator that is analytic with respect to $k^2 \in \C \setminus \{ \lambda_j (D,A,n)\}_{j=1}^\infty$ and Fredholm with index zero. 

\begin{lemma}
Let $b(\cdot,\cdot)$  and $c_k(\cdot,\cdot)$ be given as in \eqref{varformb:2} and \eqref{varformc:2}, respectively. If $A_{\text{max}}-1 < 0$ then $b(\cdot,\cdot)$ is coercive and, provided that $k^2 \in \C \setminus \{ \lambda_j (D,A,n)\}_{j=1}^\infty$, $c_k(\cdot,\cdot)$ is compact and analytic with respect to $k$. 
\end{lemma}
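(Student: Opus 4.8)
The plan is to transcribe the argument used for the first lemma of this section, exchanging the Helmholtz DtN map $T_{\text{i}}$ for the anisotropic DtN map $S_0$ and the hypothesis $A_{\text{min}}-1>0$ for $A_{\text{max}}-1<0$. I would first note that $S_0$ is well defined: the problem $\nabla\cdot A\nabla w=0$ in $D$ with $w=0$ on $\partial D$ has only the trivial solution (test with $\overline{w}$ and use positive definiteness of $A$), so $0\notin\{\lambda_j(D,A,n)\}_{j=1}^\infty$.

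For coercivity of $b(\cdot,\cdot)$, the crux is to bound the boundary term $I:=\int_{\partial D}\overline{v}\,S_0 v\,\text{d}s$ above by $A_{\text{max}}\|\nabla v\|_{L^2(D)}^2$. Given $v\in H^1(D)$, let $w\in H^1(D)$ solve $\nabla\cdot A\nabla w=0$ in $D$ with $w=v$ on $\partial D$, so that $S_0 v=\nu\cdot A\nabla w$ and $v-w\in H^1_0(D)$. Green's first identity and $\nabla\cdot A\nabla w=0$ give $I=\int_D \nabla\overline{w}\cdot A\nabla w\,\text{d}x$, and, since $\overline{v}=\overline{w}$ on $\partial D$, also $I=\int_{\partial D}\overline{v}(\nu\cdot A\nabla w)\,\text{d}s=\int_D\nabla\overline{v}\cdot A\nabla w\,\text{d}x$. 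Cauchy--Schwarz in the positive definite, $A$-weighted inner product $(p,q)\mapsto\int_D\overline{p}\cdot A q\,\text{d}x$ then forces $\int_D\nabla\overline{w}\cdot A\nabla w\,\text{d}x\le\int_D\nabla\overline{v}\cdot A\nabla v\,\text{d}x\le A_{\text{max}}\|\nabla v\|^2_{L^2(D)}$, i.e. $I\le A_{\text{max}}\|\nabla v\|^2_{L^2(D)}$. Hence
\[
b(v,v)=\|\nabla v\|^2_{L^2(D)}+\|v\|^2_{L^2(D)}-I\ \ge\ (1-A_{\text{max}})\|\nabla v\|^2_{L^2(D)}+\|v\|^2_{L^2(D)}\ \ge\ \min\{1-A_{\text{max}},1\}\,\|v\|^2_{H^1(D)},
\]
which is strictly positive for $v\neq 0$ since $A_{\text{max}}-1<0$; this proves coercivity.

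For $c_k(\cdot,\cdot)$ I would verify compactness term by term. The volume term $\int_D(k^2+1)v\overline{\phi}\,\text{d}x$ induces a compact operator on $H^1(D)$ by the compact embedding $H^1(D)\hookrightarrow L^2(D)$. The term $\int_{\partial D}\eta v\overline{\phi}\,\text{d}s$ is compact because the trace $H^1(D)\to H^{1/2}(\partial D)$ is bounded, $H^{1/2}(\partial D)\hookrightarrow L^2(\partial D)$ compactly, and $\eta\in L^\infty(\partial D)$. The term $\int_{\partial D}\overline{\phi}(S_k-S_0)v\,\text{d}s$ is compact since $S_k-S_0:H^{1/2}(\partial D)\to H^{-1/2}(\partial D)$ is compact, exactly as for $T_k-T_{\text{i}}$ in the first lemma (see \cite{BEM-FEMtransproblem}). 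Assembling these contributions and invoking the Riesz representation theorem yields a compact operator $G_k:H^1(D)\to H^1(D)$ with $c_k(v,\phi)=(G_k v,\phi)_{H^1(D)}$. Analyticity of $G_k$ in $k$ then follows from the analyticity of $S_k$ on $\{k: k^2\in\C\setminus\{\lambda_j(D,A,n)\}_{j=1}^\infty\}$, recalled before the statement, together with the fact that $k\mapsto k^2+1$ is entire.

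The only step beyond routine bookkeeping is the compactness of the anisotropic DtN difference $S_k-S_0$; if the citation is deemed insufficient one derives it from elliptic regularity, which shows $S_k-S_0$ gains one Sobolev order and is therefore compact on the relevant trace spaces. Everything else is a verbatim adaptation of the coercivity and compactness estimates already established for the first variational formulation.
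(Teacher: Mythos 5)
Your proposal is correct and follows essentially the same route as the paper: bound the boundary term $I=\int_{\partial D}\overline{v}S_0v\,\text{d}s$ by $A_{\text{max}}\|\nabla v\|^2_{L^2(D)}$ via Green's identity and Cauchy--Schwarz in the $A$-weighted inner product (the paper writes this with $A^{1/2}$ explicitly, which is the same computation), then get coercivity from $A_{\text{max}}-1<0$, and obtain compactness and analyticity of $c_k$ from the compact embeddings, the compactness of $S_k-S_0$, the Riesz representation theorem, and the analyticity of $S_k$. Your added remarks (that $0\notin\{\lambda_j(D,A,n)\}_{j=1}^\infty$ so $S_0$ is well defined, and the slightly sharper coercivity constant $\min\{1-A_{\text{max}},1\}$) are correct refinements but not a different argument.
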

\begin{proof}
Since $A$ is a symmetric real-valued positive definite matrix, there exists $A^{1/2}$ such that $A=A^{1/2}\cdot A^{1/2}$. Using  Green's 1st theorem and the boundary condition $v=w$ on $\partial D$, it holds that
$$I = \int_{\partial D} \overline{v}S_0v \,\text{d}s\ = \int_{\partial D} \overline{w}(\nu \cdot A\nabla w) \,\text{d}s= \big(\nabla w, A\nabla w \big)_{L^2(D)} =||A^{1/2}\nabla w||^2_{L^2(D)} .$$
Therefore, 
$$I = \int_{\partial D} \overline{v}(\nu \cdot A\nabla w) \,\text{d}s\ = \big(\nabla v, A\nabla w \big)_{L^2(D)} = \big(A^{1/2}\nabla v, A^{1/2}\nabla w \big)_{L^2(D)}. $$
It holds that
$$I \leq ||A^{1/2}\nabla v||_{L^2(D)}||A^{1/2}\nabla w||_{L^2(D)} \leq \sqrt{A_{\text{max}}}||\nabla v||_{L^2(D)}||A^{1/2}\nabla w||_{L^2(D)}.$$ 
This implies that $I\leq A_{\text{max}}||\nabla v||^2_{L^2(D)}\leq A_{\text{max}}||v||^2_{H^1(D)}$.
Therefore, $b(\cdot,\cdot)$ is coercive since
$$b(v,v)= \int_D |\nabla v|^2+|v|^2 \,\text{d}x\ - \int_{\partial D} \overline{v}S_0v \,\text{d}s\ \geq \big(1-A_{\text{max}} \big)||v||^2_{H^1(D)} .$$

By the compact embeddings of $H^1(D)$ into $L^2(D)$ and $H^{1/2}(\partial D)$ into $L^2(\partial D)$ and the fact that $S_k-S_0$ is compact, there exists a compact operator $F_k: H^1(D) \to H^1(D)$ such that $c_k(v,\phi)=(F_kv,\phi)_{H^1(D)}$ for all $v,\phi \in H^1(D)$. We also have that $S_k$ is analytic, and thus $c_k(\cdot, \cdot)$ is analytic for all $k$ such that  $k^2 \in \C \setminus \{ \lambda_j (D,A,n)\}_{j=1}^\infty$. The proof is complete.
\end{proof}

Consequently, there exist an invertible operator $B$ and a compact operator $F_k$ which is Fredholm and analytic with respect to $k^2 \in  \C \setminus \{ \lambda_j (D,A,n)\}_{j=1}^\infty$ associating with the sesquilinear form $a_k(\cdot \, , \cdot )$. Now we prove the injectivity of $a_k(\cdot \, , \cdot )$ for $k^2 \in  \C \setminus \{ \lambda_j (D,A,n)\}_{j=1}^\infty$.

\begin{lemma}
Let $a_k(\cdot,\cdot)$ be given as in \eqref{varforma:2} with $A_{\text{max}}-1<0$ and $\eta_{\text{min}}>0$. Then $a_0(\cdot,\cdot)$ is coercive (i.e. injective).  
\end{lemma}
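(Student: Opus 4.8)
The plan is to mimic almost verbatim the argument used for $b(\cdot,\cdot)$ in the preceding lemma, now carrying along the extra boundary term produced by $\eta$. First I would specialize \eqref{varforma:2} to $k=0$: the volume term $k^2 v\overline\phi$ disappears, and $S_0$ is well defined since $0$ is never a Dirichlet eigenvalue of $-n^{-1}\nabla\cdot A\nabla$ — indeed the form $u\mapsto\int_D\nabla\overline u\cdot A\nabla u\,\text{d}x$ is coercive on $H^1_0(D)$, so the extension problem $\nabla\cdot A\nabla w=0$ in $D$, $w=v$ on $\partial D$, is uniquely solvable. Taking $\phi=v$ gives
$$a_0(v,v)=\int_D|\nabla v|^2\,\text{d}x\ -\int_{\partial D}\overline v\,S_0v\,\text{d}s\ +\int_{\partial D}\eta|v|^2\,\text{d}s.$$

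Next I would estimate $I:=\int_{\partial D}\overline v\,S_0v\,\text{d}s$. Letting $w$ denote the solution of $\nabla\cdot A\nabla w=0$ with $w=v$ on $\partial D$ (so $S_0 v=\nu\cdot A\nabla w$), Green's first theorem gives on one hand $I=\big(\nabla w,A\nabla w\big)_{L^2(D)}=\|A^{1/2}\nabla w\|^2_{L^2(D)}$ and on the other hand $I=\big(A^{1/2}\nabla v,A^{1/2}\nabla w\big)_{L^2(D)}$ — precisely the two identities already derived in the proof of the previous lemma. The Cauchy–Schwarz inequality together with $\|A^{1/2}\nabla v\|_{L^2(D)}\le\sqrt{A_{\text{max}}}\,\|\nabla v\|_{L^2(D)}$ then yields $I\le\sqrt{A_{\text{max}}}\,\|\nabla v\|_{L^2(D)}\sqrt I$, hence $I\le A_{\text{max}}\|\nabla v\|^2_{L^2(D)}$ after dividing by $\sqrt I$ (the case $I=0$ being trivial); note $I\ge 0$ is real because $S_0$ is symmetric and $A$ is real.

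Combining the two displays,
$$a_0(v,v)\ge\big(1-A_{\text{max}}\big)\|\nabla v\|^2_{L^2(D)}+\eta_{\text{min}}\|v\|^2_{L^2(\partial D)}.$$
Since $A_{\text{max}}-1<0$ and $\eta_{\text{min}}>0$, both coefficients on the right are strictly positive, and coercivity — hence injectivity — of $a_0(\cdot,\cdot)$ follows from the equivalence of the norms $\|\cdot\|^2_{H^1(D)}$ and $\|\nabla\cdot\|^2_{L^2(D)}+\|\cdot\|^2_{L^2(\partial D)}$ on $H^1(D)$ (see e.g. \cite{salsa}).

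I do not anticipate a genuine obstacle here; the only points needing a little care are the self‑improving Cauchy–Schwarz step (with the observation that $I\ge 0$), and the fact that the relevant coercivity norm on $H^1(D)$ is the gradient‑plus‑boundary‑trace one rather than the full $H^1$ norm — the interior $L^2$ mass of $v$ is not bounded directly by $a_0(v,v)$ once the $n$‑term has dropped out, but is recovered through the $\eta$ boundary term and the norm equivalence.
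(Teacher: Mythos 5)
Your proof is correct and follows essentially the same route as the paper: specialize \eqref{varforma:2} to $k=0$, reuse the estimate $I=\int_{\partial D}\overline v\,S_0v\,\mathrm{d}s\le A_{\text{max}}\|\nabla v\|^2_{L^2(D)}$ from the preceding coercivity lemma, and conclude via the equivalence of $\|\cdot\|^2_{H^1(D)}$ with $\|\nabla\cdot\|^2_{L^2(D)}+\|\cdot\|^2_{L^2(\partial D)}$. The extra remarks you add (well-definedness of $S_0$ at $k=0$ and the nonnegativity of $I$ needed for the self-improving Cauchy--Schwarz step) are correct and only make explicit what the paper leaves implicit.
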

\begin{proof}
Note that
$$a_0(v,v)= \int_D |\nabla v| \,\text{d}x\ - \int_{\partial D} \overline{v}S_0v - \eta |v|^2 \,\text{d}s$$
and $I  \leq A_{\text{max}}||\nabla v||^2_{L^2(D)}$. Thus, $a_0(\cdot,\cdot)$ is coercive since
$$a_0(v,v) \geq \big(1-A_{\text{max}} \big)||\nabla v||^2_{L^2(D)} + \eta_{\text{min}}||v||^2_{L^2(\partial D)}$$
by the equivalence of $||\cdot||^2_{H^1(D)}$ and $||\nabla \cdot||^2_{L^2(D)} + ||\cdot||^2_{L^2(\partial D)}$ in $H^1(D)$.
\end{proof}

With this, we have proven that $a_k(\cdot,\cdot)$ is analytic and Fredholm for all complex-valued $k$ except for a discrete set on the real line. For either case, we have that $a_0(\cdot,\cdot)$ is an injective sesquilinear form. We now state the main theorem for this section. 

\begin{theorem}
If $A_{\text{min}}-1>0$ and $\eta_{\text{max}}<0$ or $A_{\text{max}}-1<0$ and $\eta_{\text{min}}>0$, then the set of transmission eigenvalues is at most  discrete.\label{TEVdiscrete}
\end{theorem}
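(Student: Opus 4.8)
The plan is to reduce the statement to a single application of the analytic Fredholm theorem, using exactly the ingredients assembled in Lemmas 2.1--2.4 and in the discussion preceding Theorem \ref{TEVdiscrete}. I will carry out only the case $A_{\min}-1>0$ and $\eta_{\max}<0$; the case $A_{\max}-1<0$ and $\eta_{\min}>0$ is word-for-word identical once the formulation \eqref{varforma:1}--\eqref{varformc:1} is replaced by \eqref{varforma:2}--\eqref{varformc:2} and the Dirichlet spectrum $\{\lambda_j(D)\}_{j=1}^\infty$ of $-\Delta$ is replaced by the Dirichlet spectrum $\{\lambda_j(D,A,n)\}_{j=1}^\infty$ of $-n^{-1}\nabla\cdot A\nabla$. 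First I would fix the region of analyticity $U:=\{k\in\C : k^2\notin\{\lambda_j(D)\}_{j=1}^\infty\}$. Since the Dirichlet eigenvalues of $-\Delta$ in $D$ are positive and tend to $+\infty$, the excluded set $\{\pm\sqrt{\lambda_j(D)}\}_{j=1}^\infty$ is closed and discrete in $\C$, so $U$ is open and connected, and it contains $k=0$ because $0$ is not a Dirichlet eigenvalue.

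Next I would use the operator representation already recorded above: by the Riesz representation theorem together with Lemma 2.1 there are bounded $B,F_k:H^1(D)\to H^1(D)$ with $b(w,\phi)=(Bw,\phi)_{H^1(D)}$, $c_k(w,\phi)=(F_kw,\phi)_{H^1(D)}$, $B$ invertible by coercivity of $b$, and $k\mapsto F_k$ compact-operator-valued and analytic on $U$. Then $a_k(w,\phi)=((B+F_k)w,\phi)_{H^1(D)}$, and factoring out $B$ gives $B+F_k=B\big(I-A(k)\big)$ with $A(k):=-B^{-1}F_k$ compact and analytic on $U$; since $B$ is invertible, $B+F_k$ is invertible (resp. injective) if and only if $I-A(k)$ is. Lemma 2.2 shows $a_0$ is coercive, hence $B+F_0$, and therefore $I-A(0)$, is invertible.

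Now I would apply the analytic Fredholm theorem (see e.g. \cite{CCH-book}) to the analytic family $k\mapsto I-A(k)$ of compact perturbations of the identity on the connected open set $U$: since this operator is invertible at $k=0\in U$, the first alternative (that $I-A(k)$ is invertible at no point of $U$) is ruled out, so $I-A(k)$ is invertible for every $k\in U$ outside a set $S\subset U$ with no accumulation point in $U$. Finally I would translate back. For $k\in U$ the value $k^2$ is not a Dirichlet eigenvalue of $-\Delta$, so $k$ is a transmission eigenvalue precisely when $a_k(w,\phi)=0$ admits a non-trivial solution, i.e. precisely when $B+F_k$ — equivalently $I-A(k)$ — fails to be injective; as $I-A(k)$ is Fredholm of index zero, failure of injectivity coincides with failure of invertibility, i.e. with $k\in S$. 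Hence the transmission eigenvalues lying in $U$ are exactly the points of $S$, which is discrete in $U$; any remaining eigenvalue satisfies $k^2\in\{\lambda_j(D)\}_{j=1}^\infty$, i.e. lies in the discrete set $\{\pm\sqrt{\lambda_j(D)}\}$, so the full set of transmission eigenvalues is at most discrete.

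The main points requiring care, rather than genuine obstacles, are: (i) verifying the hypotheses of the analytic Fredholm theorem — connectedness of $U$ and the existence of a point of $U$ at which $I-A(k)$ is invertible, the latter being exactly what Lemmas 2.2 and 2.4 were designed to provide; and (ii) the bookkeeping at the excluded Dirichlet-eigenvalue points $\pm\sqrt{\lambda_j(D)}$, where the variational reduction via $T_k$ (resp. $S_k$) degenerates; since these form a discrete set and contribute at most countably many eigenvalues, they do not affect the conclusion. Everything else is a routine passage between the sesquilinear forms and their operator representations already established in the preceding lemmas.
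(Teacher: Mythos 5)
Your proposal is correct and follows essentially the same route as the paper: decompose $a_k=b+c_k$, represent it by $B+F_k$ with $B$ invertible and $F_k$ compact and analytic, use the injectivity of $a_0$ to rule out the degenerate alternative, and conclude via the analytic Fredholm theorem. The paper states this much more tersely; your added bookkeeping (connectedness of the domain of analyticity, the factorization $B+F_k=B(I-A(k))$, and the treatment of the excluded Dirichlet-spectrum points) fills in details the paper leaves implicit but does not change the argument.
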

\begin{proof}
This is a consequence of the fact that TEV problem is given by 
$$0=a_k(w,\phi)=\left((B+F_k)w,\phi \right)_{H^1(D)} \quad \text{ for all} \,\, \phi \in H^1(D)$$ 
where $B$ is invertible and $F_k$ is a compact operator and analytic for $k$ except for a discrete set. The result follows the analytic Fredholm theorem since $a_0(\cdot,\cdot)$ is an injective sesquilinear form. 
\end{proof}

%%%%%%%%%%%%%%%%%%%%%%%%%%%%%%%%%%%%%%%%%%%%%%%%%%%%
\section{Existence of Transmission Eigenvalues} \label{section-exist}
In this section, we show that there exist infinitely many real TEVs. We shall use a technique that was first introduced in \cite{On-the-interior-TE} and used in \cite{ayala2022analysis,periodictevs} to prove the existence of TEVs associated with other scattering problems. Again, we remark that this problem has not yet been studied when $A \neq I$. We shall employ the assumptions that $A_{\text{min}}-1>0$, $n_{\text{max}}-1<0$, and $\eta_{\text{max}}<0$ or $A_{\text{max}}-1<0$, $n_{\text{min}}-1>0$, and $\eta_{\text{min}}>0$. Note that the discreteness needs no assumption on the refractive index $n$. Once the existence is established we show that the first TEV is monotone with respect to the coefficients. The result can be used to study of the inverse spectral problem of estimating the coefficients from the TEVs. 

To show the existence of real transmission eigenvalues, we consider an auxiliary problem for $u=w-v$ in $H^1_0(D)$. Note that, by \eqref{TE:1}--\eqref{TE:2}, $u$ satisfies
\begin{align}
    \nabla \cdot A\nabla u+k^2nu &= \nabla \cdot (I-A)\nabla v-k^2(n-1)v \quad \text{in $D$,} \label{TE:3}\\
    \nu \cdot A\nabla u &= \nu \cdot (I-A)\nabla v + \eta v \quad \text{on $\partial D$,} \label{TE:4}
\end{align}
where $\Delta v +k^2v = 0$ in $D$. Following \cite{On-the-interior-TE}, the above system \eqref{TE:3}--\eqref{TE:4} can be viewed as a boundary value problem for $v \in H^1(D)$ given $u \in H_0^1(D)$. We shall see that the system is well-posed for any $k \in \R_{\geq 0}$. Once we have this we will need to enforce that $v$ obtained by solving \eqref{TE:3}--\eqref{TE:4} is a solution to the Helmholtz equation in $D$.

The variational formulation of \eqref{TE:3}--\eqref{TE:4} is to find $v$ such that
\begin{align}
    \int_{D} \nabla \overline{\phi} \cdot (I-A)\nabla v + k^2(n-1)v\overline{\phi} \,\text{d}x\ &+ \int_{\partial D} \eta v\overline{\phi} \,\text{d}s   \nonumber \\
    &= \int_{D} \nabla \overline{\phi} \cdot A\nabla u - k^2nu\overline{\phi}  \,\text{d}x. \ \label{varform:utov}
\end{align}
Using the assumption on the coefficients that either $A_{\text{min}}-1>0$, $n_{\text{max}}-1<0$, and $\eta_{\text{max}}<0$ or $A_{\text{max}}-1<0$, $n_{\text{min}}-1>0$, and $\eta_{\text{min}}>0$, it can be seen that the left hand side is a coercive sesquilinear form for all $k \in \R_{\geq 0}$. The right hand side is a bounded conjugate linear functional whose norm is bounded by the norm of $u \in H^1_0(D)$. By applying to the Lax-Milgram lemma we have that \eqref{varform:utov} is well-posed, i.e., for any given $u \in H_0^1(D)$, there exists a unique solution $v_u \in H^1(D)$ satisfying \eqref{TE:3}--\eqref{TE:4}. The mapping $u \longmapsto v_u$ from $H_0^1(D)$ to $H^1(D)$ is a bounded linear operator. Now, we want the eigenfunction $v_u$ to satisfy the Helmholtz equation in $D$ i.e. 
$$0 = \int_{D} \nabla v_u \cdot \nabla \overline{\psi} - k^2v_u\overline{\psi} \,\text{d}x \quad \text{for all} \,\, \psi \in H_0^1(D).$$

We define a bounded linear operator $\mathbb{L}_k : H_0^1(D) \to H_0^1(D)$ such that
% line 22
\begin{equation}
\big( \mathbb{L}_ku,\psi \big)_{H^1(D)} := \int_{D} \nabla v_u \cdot \nabla \overline{\psi} - k^2v_u\overline{\psi} \,\text{d}x \quad \text{for all} \,\, \psi \in H_0^1(D). \label{varformL:1}
\end{equation}
If there is a $k$ such that $\mathbb{L}_k u=0$ with $u\neq 0$ then $v_u$ is a solution to the Helmholtz equation in $D$. Letting $w=u+v_u$ we have that $(w,v_u) \in H^1(D) \times H^1(D)$ satisfies \eqref{TE:1}--\eqref{TE:2} with eigenvalue $k$. Conversely, if there is a TEV $k$ then $u=w-v$(nontrivial). In addition, $v$ satisfies \eqref{TE:3}--\eqref{TE:4} and is a solution for Helmholtz equation. From this, we deduce that $k \in \R_{\geq 0}$ is a TEV if and only if $\mathbb{L}_k$ has a nontrivial null space. 
The existence of real TEVs is equivalent to finding values when the null space of $\mathbb{L}_k$ is nontrivial. 
%The next few lemmas will prove that our operator $\mathbb{L}_k$ the analytical properties needed to conclude that there are infinitely many TEVs. 

% L_k self-adjoint 
\begin{lemma}\label{Lkselfadjoint}
The operator $\mathbb{L}_k$ defined by \eqref{varformL:1} is self-adjoint.
\end{lemma}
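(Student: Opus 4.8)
The plan is to establish the symmetry relation $(\mathbb{L}_k u_1, u_2)_{H^1(D)} = (u_1, \mathbb{L}_k u_2)_{H^1(D)}$ for all $u_1, u_2 \in H_0^1(D)$ directly from the definitions \eqref{varform:utov}--\eqref{varformL:1}, exploiting that $A$ is real symmetric while $n$, $\eta$ and (for $k \in \R_{\geq 0}$) $k^2$ are real-valued. I will write $v_i := v_{u_i}$ for the solution of \eqref{varform:utov} with data $u_i$ and set $w_i := u_i + v_i$.

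First I would recast $(\mathbb{L}_k u_1, u_2)_{H^1(D)}$ in a symmetric shape. Since $u_2 \in H_0^1(D) \subset H^1(D)$ it is an admissible test function in \eqref{varform:utov} for $v_1$, and because $u_2$ vanishes on $\partial D$ the conductive boundary term drops out; adding $\int_D \nabla\overline{u_2}\cdot A\nabla v_1 - k^2 n v_1\overline{u_2}\,\text{d}x$ to both sides of the resulting identity collapses the left side to $\int_D \nabla v_1\cdot\nabla\overline{u_2} - k^2 v_1\overline{u_2}\,\text{d}x = (\mathbb{L}_k u_1, u_2)_{H^1(D)}$ and produces
$$(\mathbb{L}_k u_1, u_2)_{H^1(D)} = \int_D \nabla\overline{u_2}\cdot A\nabla w_1 - k^2 n w_1\overline{u_2}\,\text{d}x.$$
Running the same computation with the indices interchanged and then conjugating (this is where the reality and symmetry of $A$ and $n$ enter) gives $(u_1, \mathbb{L}_k u_2)_{H^1(D)} = \overline{(\mathbb{L}_k u_2, u_1)_{H^1(D)}} = \int_D \nabla\overline{w_2}\cdot A\nabla u_1 - k^2 n u_1\overline{w_2}\,\text{d}x$.

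Next, expanding $w_1 = u_1 + v_1$ and $w_2 = u_2 + v_2$ in these two formulas, the pure ``$u_1$--$u_2$'' terms cancel against each other, so self-adjointness reduces to the single cross identity
$$\int_D \nabla\overline{u_2}\cdot A\nabla v_1 - k^2 n v_1\overline{u_2}\,\text{d}x = \int_D \nabla\overline{v_2}\cdot A\nabla u_1 - k^2 n u_1\overline{v_2}\,\text{d}x.$$
To obtain this I would test \eqref{varform:utov} for $v_1$ with $\phi = v_2$, test \eqref{varform:utov} for $v_2$ with $\phi = v_1$, and conjugate the latter. The left-hand sides of the two resulting equations are assembled entirely from the symmetric quantities $\nabla\overline{v_2}\cdot(I-A)\nabla v_1$, $k^2(n-1)v_1\overline{v_2}$ and $\int_{\partial D}\eta v_1\overline{v_2}\,\text{d}s$, hence are literally equal, so their right-hand sides coincide --- and that is precisely the displayed cross identity after one further use of the symmetry of $A$. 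Combining the two steps yields the claim.

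The main obstacle I anticipate is purely the conjugation bookkeeping: one has to track carefully which factor carries the complex conjugate at each step and, in particular, observe that the conductive boundary integral $\int_{\partial D}\eta\,(\cdot)\overline{(\cdot)}\,\text{d}s$ --- which does not vanish when one tests against $v_1$ and $v_2$, neither of which lies in $H_0^1(D)$ --- shows up symmetrically on both sides and cancels exactly because $\eta$ is real-valued. Apart from that, the argument is a routine rearrangement of the integrands.
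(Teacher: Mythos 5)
Your argument is correct, but it takes a different route from the paper. The paper's proof only computes the diagonal: testing \eqref{varform:utov} with $\phi=u$ and $\phi=v_u$, it rewrites $(\mathbb{L}_k u,u)_{H^1(D)}$ as $\int_D \nabla\overline{u}\cdot A\nabla u - k^2 n|u|^2\,\text{d}x + \int_D \nabla\overline{v}\cdot(I-A)\nabla v + k^2(n-1)|v|^2\,\text{d}x + \int_{\partial D}\eta|v|^2\,\text{d}s$, observes this is real because $A$, $n$, $\eta$, $k^2$ are real, and then invokes the standard criterion (Kreyszig, Theorem 3.10-3) that a bounded operator on a complex Hilbert space with real-valued quadratic form is self-adjoint. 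You instead prove the off-diagonal symmetry $(\mathbb{L}_k u_1,u_2)=(u_1,\mathbb{L}_k u_2)$ directly, which is essentially the polarized version of the same computation: your representation $(\mathbb{L}_k u_1,u_2)=\int_D\nabla\overline{u_2}\cdot A\nabla w_1 - k^2 n w_1\overline{u_2}\,\text{d}x$ is correct (the boundary term drops since $u_2\in H^1_0(D)$), the reduction to the cross identity is sound, and the cross identity follows exactly as you say from testing the two variational problems against each other's solutions and conjugating, using that $A$ is real symmetric and $\eta$, $n$, $k^2$ are real. What your approach buys is independence from the quoted Hilbert-space theorem (self-adjointness is read off directly from symmetry of a bounded, everywhere-defined operator); what the paper's approach buys is lighter bookkeeping, and the diagonal identity it derives is reused verbatim in Lemmas \ref{L0coersive} and \ref{Lk:positive}, which is why the paper organizes the proof that way.
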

\begin{proof}
    We shall show that $\big( \mathbb{L}_ku,u \big)_{H^1(D)}$ is real-valued for any $u \in H^1_0(D)$ which implies self-adjointness by Theorem 3:10-3 in \cite{func-analysis}. 
%    We consider rewriting the expression
%    \begin{equation*}
%        \big( \mathbb{L}_ku,u \big)_{H^1(D)} = \int_{D} \nabla v_u \cdot \nabla \overline{u} - k^2v_u\overline{u} \,\text{d}x.  \label{Lself-adjoint:1}
%    \end{equation*}
    Letting $\phi = u$ and $v=v_u$, \eqref{varform:utov} becomes
    \begin{equation}
        \int_{D} \nabla \overline{u} \cdot (I-A)\nabla v + k^2(n-1)v\overline{u} \,\text{d}x\ = \int_{D} \nabla \overline{u} \cdot A\nabla u - k^2n|u|^2   \,\text{d}x.  \label{Lself-adjoint:2}
    \end{equation}
Letting $\phi = v$, \eqref{varform:utov} becomes
\begin{align}
\int_{D} \nabla \overline{v} \cdot (I-A)\nabla v + k^2(n-1)|v|^2 \,\text{d}x\ &+ \int_{\partial D} \eta |v|^2 \,\text{d}s \nonumber\\
&= \int_{D} \nabla \overline{v} \cdot A\nabla u - k^2nu\overline{v}   \,\text{d}x. \label{Lself-adjoint:3}
\end{align}
Note that \eqref{varformL:1} can be written as 
\begin{align*}
\big( \mathbb{L}_ku,u \big)_{H^1(D)} = \int_{D} \nabla \overline{u} \cdot (I-A)\nabla v &+ k^2(n-1)v\overline{u} \,\text{d}x\ \\
&+ \int_{D} \nabla \overline{u} \cdot A\nabla v - k^2nv \overline{u}   \,\text{d}x.
\end{align*}
Using \eqref{Lself-adjoint:2} and \eqref{Lself-adjoint:3}, we have that 
\begin{align*}
\big( \mathbb{L}_ku,u \big)_{H^1(D)}= \int_{D} \nabla \overline{u} \cdot A\nabla u &- k^2n|u|^2   \,\text{d}x\ \\
&\hspace{-0.2in}+ \int_{D} \nabla \overline{v} \cdot (I-A)\nabla v + k^2(n-1)|v|^2 \,\text{d}x+ \int_{\partial D} \eta |v|^2 \,\text{d}s.
\end{align*}
From the fact that $A$ and $I-A$ are symmetric real-valued matrices, we can infer that the above expression is real-valued for any $u \in H^1_0(D)$. Therefore, $\mathbb{L}_k$ is self-adjoint.
\end{proof}

We now show that $\mathbb{L}_0$ is coercive if $A_{\text{max}}-1<0$ and $\eta_{\text{min}}>0$ and $-\mathbb{L}_0$ is coercive if $A_{\text{min}}-1>0$ and $\eta_{\text{max}}<0$. This is consistent with the results in the previous section where a different variational argument is used. 

% +-L_0 is coercive
\begin{lemma}\label{L0coersive}
The operator $\mathbb{L}_0$ is coercive if $A_{\text{max}}-1<0$ and $\eta_{\text{min}}>0$ and $-\mathbb{L}_0$ is coercive if $A_{\text{min}}-1>0$ and $\eta_{\text{max}}<0$.
\end{lemma}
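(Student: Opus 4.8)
The plan is to evaluate the energy identity established in the proof of Lemma~\ref{Lkselfadjoint} at $k=0$ and then treat the two sign regimes separately. Writing $v=v_u\in H^1(D)$ for the unique solution of \eqref{varform:utov} with $k=0$, that identity becomes
$$\big(\mathbb{L}_0u,u\big)_{H^1(D)}=\int_D\nabla\overline{u}\cdot A\nabla u\,\text{d}x+\int_D\nabla\overline{v}\cdot(I-A)\nabla v\,\text{d}x+\int_{\partial D}\eta|v|^2\,\text{d}s,$$
which is real-valued by Lemma~\ref{Lkselfadjoint}. In each case the target is an estimate $\pm\big(\mathbb{L}_0u,u\big)_{H^1(D)}\geq C\|u\|^2_{H^1(D)}$, and the final reduction will in both cases be the Poincar\'e inequality on $H^1_0(D)$, which makes $\|\nabla\,\cdot\,\|_{L^2(D)}$ an equivalent norm there.

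First I would dispatch the easy case $A_{\text{max}}-1<0$ and $\eta_{\text{min}}>0$. Then $I-A$ is uniformly positive definite and $\eta\geq\eta_{\text{min}}>0$, so the two rightmost integrals above are non-negative, while $\int_D\nabla\overline{u}\cdot A\nabla u\,\text{d}x\geq A_{\text{min}}\|\nabla u\|^2_{L^2(D)}$. Poincar\'e then gives $\big(\mathbb{L}_0u,u\big)_{H^1(D)}\geq C\|u\|^2_{H^1(D)}$, i.e. $\mathbb{L}_0$ is coercive.

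The substantive case is $A_{\text{min}}-1>0$ and $\eta_{\text{max}}<0$. Put $M:=A-I$; this is uniformly positive definite with $\sup_{x}\sup_{|\xi|=1}\overline{\xi}\cdot M(x)\xi=A_{\text{max}}-1\in(0,\infty)$, and the identity above rearranges to
$$-\big(\mathbb{L}_0u,u\big)_{H^1(D)}=\|M^{1/2}\nabla v\|^2_{L^2(D)}-\|A^{1/2}\nabla u\|^2_{L^2(D)}-\int_{\partial D}\eta|v|^2\,\text{d}s,$$
whose boundary term is non-negative since $\eta<0$; hence it suffices to show $\|M^{1/2}\nabla v\|^2_{L^2(D)}-\|A^{1/2}\nabla u\|^2_{L^2(D)}\geq C\|u\|^2_{H^1(D)}$. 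The key step is to test \eqref{varform:utov} (at $k=0$) against $\phi=u$: since $u\in H^1_0(D)$ the boundary integral in \eqref{varform:utov} vanishes, leaving $\|A^{1/2}\nabla u\|^2_{L^2(D)}=\int_D\nabla\overline{u}\cdot(I-A)\nabla v\,\text{d}x=-\int_D(M^{1/2}\nabla v)\cdot\overline{(M^{1/2}\nabla u)}\,\text{d}x$. As the left-hand side is real and non-negative, Cauchy--Schwarz in $L^2(D)^d$ yields $\|A^{1/2}\nabla u\|^2_{L^2(D)}\leq\|M^{1/2}\nabla v\|_{L^2(D)}\,\|M^{1/2}\nabla u\|_{L^2(D)}$. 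For $u\neq0$ one has $\|\nabla u\|_{L^2(D)}>0$ by Poincar\'e, so $\|M^{1/2}\nabla u\|_{L^2(D)}>0$ and we may divide to get $\|M^{1/2}\nabla v\|^2_{L^2(D)}\geq\|A^{1/2}\nabla u\|^4_{L^2(D)}/\|M^{1/2}\nabla u\|^2_{L^2(D)}$. Feeding this back, together with the algebraic identity $\|M^{1/2}\nabla u\|^2_{L^2(D)}=\|A^{1/2}\nabla u\|^2_{L^2(D)}-\|\nabla u\|^2_{L^2(D)}$ and the bounds $\|A^{1/2}\nabla u\|^2_{L^2(D)}\geq A_{\text{min}}\|\nabla u\|^2_{L^2(D)}$, $\|M^{1/2}\nabla u\|^2_{L^2(D)}\leq(A_{\text{max}}-1)\|\nabla u\|^2_{L^2(D)}$, gives $\|M^{1/2}\nabla v\|^2_{L^2(D)}-\|A^{1/2}\nabla u\|^2_{L^2(D)}\geq\frac{A_{\text{min}}}{A_{\text{max}}-1}\|\nabla u\|^2_{L^2(D)}$; one final application of Poincar\'e then shows $-\mathbb{L}_0$ is coercive.

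I expect this last case to be the only real obstacle. Unlike the bilinear-form analysis of Section~\ref{section-discr}, the quadratic form $\big(\mathbb{L}_0u,u\big)_{H^1(D)}$ is \emph{not} sign-definite term by term --- the contribution $-\|A^{1/2}\nabla u\|^2_{L^2(D)}$ has the wrong sign --- so coercivity cannot simply be read off. The remedy is to use quantitatively that $v=v_u$ is slaved to $u$ through \eqref{varform:utov}; the Cauchy--Schwarz bound above, combined with the cancellation $\|A^{1/2}\nabla u\|^2_{L^2(D)}-\|M^{1/2}\nabla u\|^2_{L^2(D)}=\|\nabla u\|^2_{L^2(D)}$, is precisely what lets $\|M^{1/2}\nabla v\|^2_{L^2(D)}$ dominate $\|A^{1/2}\nabla u\|^2_{L^2(D)}$ with a margin proportional to $\|\nabla u\|^2_{L^2(D)}$.
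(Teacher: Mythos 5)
Your proof is correct. The first case ($A_{\text{max}}-1<0$, $\eta_{\text{min}}>0$) coincides with the paper's argument: evaluate the quadratic-form identity from Lemma \ref{Lkselfadjoint} at $k=0$ and note that every term except $\int_D\nabla\overline{u}\cdot A\nabla u\,\text{d}x$ is non-negative. For the second case the paper takes a different and shorter route: it changes variables to $w=u+v$ and, by testing \eqref{varform:utov} with $\phi=w$ (using $w=v$ on $\partial D$), rewrites the quadratic form as
$$-\big(\mathbb{L}_0u,u\big)_{H^1(D)}=\int_D\nabla\overline{w}\cdot(A-I)\nabla w\,\text{d}x-\int_{\partial D}\eta|w|^2\,\text{d}s+\int_D|\nabla u|^2\,\text{d}x,$$
in which all three terms are manifestly non-negative under $A_{\text{min}}-1>0$ and $\eta_{\text{max}}<0$, so $-\big(\mathbb{L}_0u,u\big)_{H^1(D)}\geq\|\nabla u\|^2_{L^2(D)}$ falls out immediately. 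You instead stay in the $(u,v)$ variables, where the contribution $-\|A^{1/2}\nabla u\|^2_{L^2(D)}$ has the wrong sign, and compensate by testing \eqref{varform:utov} against $\phi=u$ and running a Cauchy--Schwarz/division argument; this is sound --- the boundary term vanishes for $\phi=u\in H^1_0(D)$, the cancellation $\|A^{1/2}\nabla u\|^2_{L^2(D)}-\|(A-I)^{1/2}\nabla u\|^2_{L^2(D)}=\|\nabla u\|^2_{L^2(D)}$ is exact, and the uniform bounds $A_{\text{min}}$, $A_{\text{max}}-1$ give a legitimate constant --- and it produces the explicit margin $\tfrac{A_{\text{min}}}{A_{\text{max}}-1}\|\nabla u\|^2_{L^2(D)}$ before the final Poincar\'e step. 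What the paper's $w$-substitution buys is not only brevity: the resulting identity in $w$ is reused essentially verbatim in Lemmas \ref{Lk:positive} and \ref{thm-nonpos} and in Theorem \ref{thm-mono1}, so it is the structurally preferred form. Your version, by contrast, makes the quantitative slaving of $v_u$ to $u$ explicit, which is a perfectly valid (if more computational) alternative that the paper never needs elsewhere.
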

\begin{proof}
We first consider $\mathbb{L}_0$. Recall in the proof of the previous lemma that
\begin{align*}
\big( \mathbb{L}_ku,u \big)_{H^1(D)} = \int_{D} \nabla \overline{u} \cdot A\nabla u &- k^2n|u|^2   \,\text{d}x\\
 &+ \int_{D} \nabla \overline{v} \cdot (I-A)\nabla v + k^2(n-1)|v|^2 \,\text{d}x\ + \int_{\partial D} \eta |v|^2 \,\text{d}s.
\end{align*}
It holds that
\begin{align*}
\big( \mathbb{L}_0u,u \big)_{H^1(D)} &= \int_{D} \nabla \overline{u} \cdot A\nabla u \,\text{d}x\ + \int_{D} \nabla \overline{v} \cdot (I-A)\nabla v \,\text{d}x\ + \int_{\partial D} \eta |v|^2 \,\text{d}s\ \\
&\geq A_{\text{min}}||\nabla u||^2_{L^2(D)} + (1-A_{\text{max}})||\nabla v||^2_{L^2(D)} + \eta_{\text{min}}||v||^2_{L^2(\partial D)} \\  
&\geq A_{\text{min}}||\nabla u||^2_{L^2(D)}.
\end{align*} 
Thus, by the Poincar\'{e} inequality, $\mathbb{L}_0$ is coercive if $A_{\text{max}}-1<0$ and $\eta_{\text{min}}>0$. \\

Next we show that $-\mathbb{L}_0$ is coercive. Letting $w=u+v$,  \eqref{varformL:1} becomes
    \begin{equation}
        \big( \mathbb{L}_ku,u \big)_{H^1(D)} = \int_{D} \nabla w \cdot \nabla \overline{u} - k^2w\overline{u} \,\text{d}x\ - \int_{D} |\nabla u|^2-k^2|u|^2 \,\text{d}x. \label{-L0coersive:1}
    \end{equation}
   Taking $\phi = w$ in \eqref{varform:utov}, we have that
$$\int_{D} \nabla \overline{w} \cdot (I-A)\nabla v + k^2(n-1)v\overline{w} \,\text{d}x\ + \int_{\partial D} \eta v\overline{w} \,\text{d}s\ = \int_{D} \nabla \overline{w} \cdot A\nabla u - k^2nu\overline{w}   \,\text{d}x.$$
It implies that
$$\int_{D} \nabla \overline{w} \cdot \nabla v - k^2v\overline{w} \,\text{d}x\ + \int_{\partial D} \eta |w|^2 \,\text{d}s\ = \int_{D} \nabla \overline{w} \cdot A\nabla w -k^2n |w|^2   \,\text{d}x, $$
where we use that $w=u+v$ and $w=v$ on $\partial D$. Then adding   
${\displaystyle \int_D \nabla \overline{w} \cdot \nabla u -k^2u\overline{w} \,\text{d}x}$
to both sides of the above equation and rearranging the terms, one obtains 
\begin{align}
\int_D \nabla \overline{w} \cdot \nabla u -k^2u\overline{w} \,\text{d}x\ = \int_D \nabla \overline{w} \cdot (I-A)\nabla w +k^2(n-1)|w|^2 \,\text{d}x +\int_{\partial D} \eta |w|^2 \,\text{d}s. \label{-L0coersive:2}
\end{align}
Using \eqref{-L0coersive:2}, we can rewrite \eqref{-L0coersive:1} as
\begin{align*}
\big( \mathbb{L}_ku,u \big)_{H^1(D)} = \int_D \nabla \overline{w} \cdot (I-A)\nabla w &+k^2(n-1)|w|^2 \,\text{d}x \nonumber \\
&+ \int_{\partial D} \eta |w|^2 \,\text{d}s\ - \int_{D} |\nabla u|^2-k^2|u|^2 \,\text{d}x. 
\end{align*}
We let $k=0$ and obtain
\begin{align*}
-\big(\mathbb{L}_0u,u \big)_{H^1(D)} &= \int_D \nabla \overline{w} \cdot (A-I)\nabla w \,\text{d}x\ - \int_{\partial D} \eta |w|^2 \,\text{d}s\ + \int_{D} |\nabla u|^2 \,\text{d}x \\
&\geq ||\nabla u||^2_{L^2(D)},
\end{align*}
  where we have used the fact that $A_{\text{min}}-1>0$ and $\eta_{\text{max}}<0$. Hence, by the Poincar\'{e} inequality, $-\mathbb{L}_0$ is coercive.
\end{proof}

% We now continue to study the operator $\mathbb{L}_k$. 

% L_k-L_0 compact
\begin{lemma}\label{Lk-L0compact}
The operator $\mathbb{L}_k - \mathbb{L}_0$ is compact provided that either $A_{max}-1<0$ and $\eta_{min}>0$ or $A_{min}-1>0$ and $\eta_{max}<0$.
\end{lemma}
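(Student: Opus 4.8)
The plan is to display $\mathbb{L}_k-\mathbb{L}_0$ as a sum of operators each of which is a bounded operator composed with a map that is compact by the Rellich--Kondrachov theorem. The first thing to keep in mind is that the solution $v_u$ of \eqref{varform:utov} itself depends on $k$; I will write $v_u^k$ for this solution, so that $\mathbb{L}_k u$ carries $k$ both through the explicit term $-k^2 v_u^k\overline\psi$ and through $v_u^k$. Thus, for $\psi\in H_0^1(D)$,
$$\big((\mathbb{L}_k-\mathbb{L}_0)u,\psi\big)_{H^1(D)} = \int_D \nabla\big(v_u^k - v_u^0\big)\cdot\nabla\overline\psi\,\text{d}x - k^2\int_D v_u^k\,\overline\psi\,\text{d}x .$$

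The second term is straightforward: $u\mapsto v_u^k$ is a bounded linear map $H_0^1(D)\to H^1(D)$ by the Lax--Milgram argument already used for \eqref{varform:utov}, and $H^1(D)\hookrightarrow L^2(D)$ compactly, so $u\mapsto v_u^k$ is compact from $H_0^1(D)$ into $L^2(D)$; composing with the bounded map sending $g\in L^2(D)$ to the $H_0^1(D)$-Riesz representative of $\psi\mapsto (g,\psi)_{L^2(D)}$ shows that $u\mapsto\big(\psi\mapsto\int_D v_u^k\overline\psi\,\text{d}x\big)$ is a compact operator on $H_0^1(D)$. The main work is the gradient term, which is not directly amenable to Rellich since gradients are not compactly embedded. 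The resolution is that $z_u:=v_u^k-v_u^0$ solves a coercive variational problem forced by $L^2(D)$ data: subtracting the $k$- and $0$-versions of \eqref{varform:utov} gives, for all $\phi\in H^1(D)$,
$$\int_D \nabla\overline\phi\cdot(I-A)\nabla z_u\,\text{d}x + \int_{\partial D}\eta\, z_u\overline\phi\,\text{d}s = -k^2\int_D\big[(n-1)v_u^k + n u\big]\overline\phi\,\text{d}x .$$
Under either sign assumption on $A$ and $\eta$ the left-hand form is coercive on $H^1(D)$ (exactly as in the coercivity arguments of Section~\ref{section-discr}), so $z_u=\mathcal{B}^{-1}\ell_u$ with $\mathcal{B}^{-1}\colon (H^1(D))^*\to H^1(D)$ bounded and $\ell_u$ the functional $\phi\mapsto -k^2\int_D[(n-1)v_u^k+nu]\overline\phi\,\text{d}x$. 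Since $u\mapsto (n-1)v_u^k+nu$ is compact from $H_0^1(D)$ into $L^2(D)$ (boundedness of $u\mapsto v_u^k$ into $H^1(D)$ plus Rellich for $v_u^k$ and for $u$), and $L^2(D)\hookrightarrow (H^1(D))^*$ is bounded, the map $u\mapsto\ell_u$ is compact, hence so is $u\mapsto z_u$ from $H_0^1(D)$ into $H^1(D)$; composing with the bounded map that sends $z\in H^1(D)$ to the $H_0^1(D)$-Riesz representative of $\psi\mapsto\int_D\nabla z\cdot\nabla\overline\psi\,\text{d}x$ makes the first term compact as well. Adding the two compact pieces yields compactness of $\mathbb{L}_k-\mathbb{L}_0$.

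The one genuinely delicate point, which I would state carefully, is the observation that the difference $v_u^k-v_u^0$ loses the non-compact part: $v_u^k$ and $v_u^0$ are individually only bounded (not compact) functions of $u$ in $H^1(D)$, but their difference is governed by a coercive elliptic problem whose right-hand side is a purely $L^2$-type functional, and this is precisely what upgrades $u\mapsto z_u$ to a compact map and thereby tames the gradient term. Everything else is the routine ``bounded $\circ$ compact $=$ compact'' bookkeeping together with Rellich--Kondrachov and the compactness of the trace $H^1(D)\to L^2(\partial D)$; in particular the argument is identical in the two parameter regimes, since coercivity of the form on the left-hand side above holds in both.
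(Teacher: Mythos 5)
Your proposal is correct and follows essentially the same route as the paper: both isolate the difference $v_u^k-v_u^0$, observe by subtracting the two instances of \eqref{varform:utov} that it solves a coercive problem forced only by $L^2(D)$ data, and then invoke the compact embedding of $H^1(D)$ into $L^2(D)$ to control both that difference and the residual $-k^2 v_u^k$ term. The paper phrases this sequentially (a weakly null sequence $u^j$ yields $(\mathbb{L}_k-\mathbb{L}_0)u^j\to 0$ strongly) while you phrase it as a bounded-composed-with-compact factorization, but the key estimate $\|v_u^k-v_u^0\|_{H^1(D)}\leq C(\|v_u^k\|_{L^2(D)}+\|u\|_{L^2(D)})$ is identical.
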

\begin{proof}
Assume that there exists a sequence $u^j \rightharpoonup 0$ in $H^1_0(D)$. By \eqref{varform:utov}, there exist sequences $v_k^j \rightharpoonup 0$ and $v_0^j \rightharpoonup 0$ in $H^1(D)$. These sequences correspond to the solutions of \eqref{varform:utov} for each given $k$. We now define $(\mathbb{L}_k - \mathbb{L}_0)u^j$ in terms of $v_k^j$ and $v_0^j$. To this end, using the variational form \eqref{varform:utov}, we have that, for all $\phi \in H^1(D)$,
\begin{align*}
\int_{D} \nabla \overline{\phi} \cdot (I-A)\nabla v_k^j + k^2(n-1)v_k^j\overline{\phi} \,\text{d}x\ &+ \int_{\partial D} \eta v_k^j\overline{\phi} \,\text{d}s\\
&= \int_{D} \nabla \overline{\phi} \cdot A\nabla u^j - k^2nu^j\overline{\phi} \,\text{d}x\
\end{align*}
    and
$$\int_{D} \nabla \overline{\phi} \cdot (I-A)\nabla v_0^j \,\text{d}x\ + \int_{\partial D} \eta v_0^j\overline{\phi} \,\text{d}s\ = \int_{D} \nabla \overline{\phi} \cdot A\nabla u^j \,\text{d}x.$$

We let $\phi = v_k^j-v_0^j$ and subtract the equations to obtain 
\begin{align}
\int_{D} \nabla \overline{(v_k^j-v_0^j)} \cdot (I-A)\nabla (v_k^j-v_0^j) \,\text{d}x\ &+ \int_{\partial D} \eta |v_k^j-v_0^j|^2 \,\text{d}s \nonumber\\
&\hspace{-0.5in}= \int_{D} -k^2nu^j\overline{(v_k^j-v_0^j)}-k^2(n-1)v_k^j\overline{(v_k^j-v_0^j)}   \,\text{d}x. \label{Lk-L0:compact}
\end{align}
The assumption that $A_{\text{max}}-1<0$ and $\eta_{\text{min}}>0$ or $A_{\text{min}}-1>0$ and $\eta_{\text{max}}<0$ implies that the left-hand side of \eqref{Lk-L0:compact} is equivalent to $||v_k^j-v_0^j||^2_{H^1(D)}$. This implies that 
$$||v_k^j-v_0^j||_{H^1(D)} \leq C\left( ||v_k^j||_{L^2(D)} + ||u^j||_{L^2(D)} \right), $$
where $C$ is independent of $j$. By the compact embedding of $H^1(D)$ into $L^2(D)$ it holds that
$$||v_k^j-v_0^j||_{H^1(D)} \longrightarrow 0 \quad \text{as } \quad j \to \infty .$$ 
By the definition of $\mathbb{L}_k$ in \eqref{varformL:1} we obtain that, for all $\psi \in H_0^1(D)$,
\begin{align*}
\big( (\mathbb{L}_k-\mathbb{L}_0)u^j,\psi \big)_{H^1(D)} = \int_{D} \nabla (v_k^j-v_0^j) \cdot \nabla \overline{\psi} - k^2v_k^j\overline{\psi} \,\text{d}x.
\end{align*}
Letting $\psi = (\mathbb{L}_k-\mathbb{L}_0)u^j$ and using the Cauchy-Schwartz inequality, we have that
$$||(\mathbb{L}_k-\mathbb{L}_0)u^j||_{H^1(D)} \leq \left( ||v_k^j-v_0^j||_{H^1(D)} +k^2 ||v_k^j||_{L^2(D)} \right) \longrightarrow 0 \quad \text{as } \quad j \to \infty .$$
Thus, $(\mathbb{L}_k-\mathbb{L}_0)u^j$ strongly converges to $0$ in $H^1(D)$, proving the compactness.
\end{proof}

From \eqref{varform:utov} and \eqref{varformL:1} we see that $\mathbb{L}_k$ is continuous with with respect to $k \in \R_{\geq 0}$. Next we prove that the operator $\pm \mathbb{L}_k$ is positive for certain $k$'s. 

% +-L_k positive for values of k
\begin{lemma}\label{Lk:positive}
Let $\lambda_1(D)$ be the  first Dirichlet eigenvalue of $-\Delta$ in $D$ and $k^2$ be a real transmission eigenvalue. We have the following:
    \begin{enumerate}
        \item If $A_{\text{min}}-1>0$, $n_{\text{max}}-1<0$, and $\eta_{\text{max}}<0$, then $-\mathbb{L}_k$ is positive for $k^2 < \lambda_1(D)$.
        \item If $A_{\text{max}}-1<0$, $n_{\text{min}}-1>0$, and $\eta_{\text{min}}>0$, then $\mathbb{L}_k$ is positive for $k^2 < \frac{A_{\text{min}}}{n_{\text{max}}}\lambda_1(D)$.
    \end{enumerate}
\end{lemma}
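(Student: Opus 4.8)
===

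\textbf{Proof proposal for Lemma~\ref{Lk:positive}.}

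The plan is to evaluate the quadratic form $\big(\mathbb{L}_k u, u\big)_{H^1(D)}$ on a nontrivial $u$ and show it is strictly positive (or strictly negative, in case~1) for the stated range of $k^2$, using the two representations of the form derived in the proofs of Lemmas~\ref{Lkselfadjoint} and \ref{L0coersive}. For case~1 I would start from the identity
\begin{align*}
\big( \mathbb{L}_ku,u \big)_{H^1(D)} = \int_D \nabla \overline{w} \cdot (I-A)\nabla w +k^2(n-1)|w|^2 \,\text{d}x + \int_{\partial D} \eta |w|^2 \,\text{d}s - \int_{D} |\nabla u|^2-k^2|u|^2 \,\text{d}x,
\end{align*}
with $w = u + v_u$, that appears in the proof of Lemma~\ref{L0coersive}. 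Under $A_{\min}-1>0$, $n_{\max}-1<0$, $\eta_{\max}<0$ the first three terms are each $\leq 0$, so
$$-\big( \mathbb{L}_ku,u \big)_{H^1(D)} \geq \int_D |\nabla u|^2 - k^2|u|^2 \,\text{d}x,$$
and the right-hand side is strictly positive for $u \neq 0$ whenever $k^2 < \lambda_1(D)$, by the variational (Rayleigh-quotient) characterization of the first Dirichlet eigenvalue, $\lambda_1(D) = \inf_{0\neq u \in H^1_0(D)} \|\nabla u\|_{L^2(D)}^2 / \|u\|_{L^2(D)}^2$. One must also note $u\neq 0 \Rightarrow \mathbb{L}_k u \neq 0$ is not needed here; what is needed is just $(\mathbb{L}_k u,u) < 0$ for all $u\neq 0$, which is exactly positivity of $-\mathbb{L}_k$.

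For case~2 I would instead use the other representation,
\begin{align*}
\big( \mathbb{L}_ku,u \big)_{H^1(D)} = \int_{D} \nabla \overline{u} \cdot A\nabla u - k^2n|u|^2   \,\text{d}x + \int_{D} \nabla \overline{v} \cdot (I-A)\nabla v + k^2(n-1)|v|^2 \,\text{d}x + \int_{\partial D} \eta |v|^2 \,\text{d}s,
\end{align*}
with $v = v_u$. Under $A_{\max}-1<0$, $n_{\min}-1>0$, $\eta_{\min}>0$ the last three terms are each $\geq 0$, so
$$\big( \mathbb{L}_ku,u \big)_{H^1(D)} \geq \int_D \nabla\overline{u}\cdot A\nabla u - k^2 n|u|^2 \,\text{d}x \geq A_{\min}\|\nabla u\|_{L^2(D)}^2 - k^2 n_{\max}\|u\|_{L^2(D)}^2,$$
and again by the Rayleigh characterization $\|\nabla u\|_{L^2(D)}^2 \geq \lambda_1(D)\|u\|_{L^2(D)}^2$, this bound is strictly positive for $u\neq 0$ as soon as $A_{\min}\lambda_1(D) - k^2 n_{\max} > 0$, i.e. $k^2 < \frac{A_{\min}}{n_{\max}}\lambda_1(D)$.

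The only genuinely delicate point is the strictness: dropping terms gives $\geq$, and one must ensure the surviving term is \emph{strictly} positive for every nonzero $u$, which is where the strict inequality $k^2 < \lambda_1(D)$ (resp. $k^2 < \frac{A_{\min}}{n_{\max}}\lambda_1(D)$) and the fact that $u \in H^1_0(D)$ with $u \neq 0$ forces $\|u\|_{L^2(D)} > 0$ and $\|\nabla u\|_{L^2(D)}>0$ are both used. I would also remark that the hypothesis ``$k^2$ is a real transmission eigenvalue'' is not actually needed for the positivity estimate itself — it is stated because the lemma will be applied only at transmission eigenvalues in the subsequent existence argument — so the proof should simply establish the inequality for \emph{all} $u\in H^1_0(D)\setminus\{0\}$ in the stated $k$-range. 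No compactness or Fredholm theory is required here; it is a direct energy estimate plus the first-eigenvalue Rayleigh quotient.
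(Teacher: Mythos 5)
Your proposal is correct and follows essentially the same route as the paper: both cases start from the same two representations of $\big(\mathbb{L}_k u,u\big)_{H^1(D)}$ (from the proofs of Lemmas \ref{Lkselfadjoint} and \ref{L0coersive}), drop the sign-definite terms, and conclude via the Poincar\'e/Rayleigh-quotient bound $\|\nabla u\|^2_{L^2(D)} \geq \lambda_1(D)\|u\|^2_{L^2(D)}$ for $u \in H^1_0(D)$. Your side remark that the hypothesis ``$k^2$ is a real transmission eigenvalue'' is not used in the estimate itself is also accurate.
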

\begin{proof}
    {1.} Let $A_{\text{min}}-1>0$, $n_{\text{max}}-1<0$, and $\eta_{\text{max}}<0$. From the proof of Lemma \ref{L0coersive},
    \begin{align*}
- \big(\mathbb{L}_ku,u \big)_{H^1(D)} = -\int_D \nabla \overline{w} \cdot (I-A)\nabla w &+k^2(n-1)|w|^2 \,\text{d}x \nonumber \\
 &- \int_{\partial D} \eta |w|^2 \,\text{d}s\ + \int_{D} |\nabla u|^2-k^2|u|^2 \,\text{d}x\ . 
    \end{align*}
    Using the Poincar\'{e} inequality $||u||^2_{L^2(D)} \leq \frac{1}{\lambda_1(D)}||\nabla u||^2_{L^2(D)}, u\in H_0^1(D),$ we obtain that
    \begin{equation*}
        -\big( \mathbb{L}_ku,u \big)_{H^1(D)} \geq \int_{D} |\nabla u|^2-k^2|u|^2 \,\text{d}x\ \geq \left( 1-\frac{k^2}{\lambda_1(D)} \right)||\nabla u||^2_{L^2(D)} .
    \end{equation*}
    Hence $-\mathbb{L}_k$ is positive for values of $k$ such that $k^2<\lambda_1(D) $.

    {2.} Let $A_{\text{max}}-1<0$, $n_{\text{min}}-1>0$, and $\eta_{\text{min}}>0$. From Lemma \ref{Lkselfadjoint}, 
    \begin{align*}
        \big( \mathbb{L}_ku,u \big)_{H^1(D)} = \int_{D} \nabla \overline{u} \cdot A\nabla u &- k^2n|u|^2 \,\text{d}x\\
        &+ \int_{D} \nabla \overline{v} \cdot (I-A)\nabla v + k^2(n-1)|v|^2 \,\text{d}x\ + \int_{\partial D} \eta |v|^2 \,\text{d}s.
    \end{align*}
    Again, using the Poincar\'{e} Inequality, we have that 
    \begin{align*}
        \big( \mathbb{L}_ku,u \big)_{H^1(D)} &\geq \int_{D} \nabla \overline{u} \cdot A\nabla u - k^2n|u|^2 \,\text{d}x\ \geq \int_{D}  A_{\text{min}}|\nabla u|^2 - k^2n_{\text{max}}|u|^2 \,\text{d}x\\
        &\geq \left( A_{\text{min}}-n_{\text{max}} \frac{k^2}{\lambda_1(D)}  \right)||\nabla u||^2_{L^2(D)} .
    \end{align*}
Thus $\mathbb{L}_k$ is positive for values of $k$ such that $k^2 < \frac{A_{\text{min}}}{n_{\text{max}}}\lambda_1(D)$.
\end{proof}

From the above lemma, all real TEVs must satisfy either Faber--Krahn type inequality
$$k^2 \geq \lambda_1(D)  \quad \text{provided that}\quad  A_{\text{min}}-1>0, \,\,\, n_{\text{max}}-1<0, \,\,\,\eta_{\text{max}}<0$$
or  
$$k^2 \geq \frac{A_{\text{min}}}{n_{\text{max}}}\lambda_1(D) \quad \text{ provided that} \quad  A_{\text{max}}-1<0, \,\, \, n_{\text{min}}-1>0, \,\,\,\eta_{\text{min}}>0.$$
It is consistent with the case when $\eta=0$.

% +-L_k non-positive on a subspace of H_0^1(D)
\begin{lemma}
    There exists a $\tau>0$ such that $-\mathbb{L}_\tau$ with $A_{\text{min}}-1>0$, $n_{\text{max}}-1<0$, and $\eta_{\text{max}}<0$ or $\mathbb{L}_\tau$ with $A_{\text{max}}-1<0$, $n_{\text{min}}-1>0$, and $\eta_{\text{min}}>0$ is non-positive on some $N$--dimensional subspace of $H_0^1(D)$ for any $N\in \mathbb{N}$. \label{thm-nonpos}
\end{lemma}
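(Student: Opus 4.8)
The plan is to follow the approach of \cite{On-the-interior-TE} (also used in \cite{ayala2022analysis,periodictevs}): for a given $N$ we compare the quadratic form of $\mathbb{L}_k$ against the anisotropic (non-conductive) interior transmission problem posed on $N$ small disjoint balls with constant coefficients, which on a ball can be solved by separation of variables and admits nontrivial solutions at a sequence of wave numbers tending to infinity. So, fixing $N\in\mathbb{N}$, I will produce $\tau=\tau_N>0$ (with $\tau_N\to\infty$ as $N\to\infty$) together with an explicit $N$-dimensional subspace of $H^1_0(D)$ on which $-\mathbb{L}_{\tau_N}$ (first case) or $\mathbb{L}_{\tau_N}$ (second case) is non-positive. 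Throughout I use the representation of $(\mathbb{L}_k u,u)_{H^1(D)}$ derived in Lemma \ref{Lkselfadjoint}, written in terms of $u$ and the auxiliary field $v_u$ from \eqref{varform:utov}.

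First, a packing argument: since $D$ is open, for every small enough $\rho>0$ there are points $x_1,\dots,x_N\in D$ such that the balls $B_\rho(x_j)$ are pairwise disjoint, contained in $D$ together with their closures, and have $\dist(B_\rho(x_j),\partial D)$ and their mutual distances large compared with $\rho$. On a reference ball $B_\rho$ I consider the interior transmission problem with constant scalar coefficients $\tilde a$ and $\tilde n$ chosen strictly between $1$ and the extremal values of $A$ and $n$ (so $1<\tilde a<A_{\text{min}}$ and $n_{\text{max}}<\tilde n<1$ in the first case, $A_{\text{max}}<\tilde a<1$ and $1<\tilde n<n_{\text{min}}$ in the second) and with no conductive term: by separation of variables into radial Bessel functions and spherical harmonics this has a nontrivial solution $(w_\rho,v_\rho)$ at a first eigenvalue $\tau(\rho)$, and the dilation $x\mapsto x/\rho$ gives $\tau(\rho)=\tau(1)/\rho$. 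Picking $\rho=\rho_N$ small enough and $\tau_N=\tau(\rho_N)$, and noting $w_\rho=v_\rho$ on $\partial B_\rho$, the zero extensions $u_j$ of $w_\rho(\cdot-x_j)-v_\rho(\cdot-x_j)$ belong to $H^1_0(D)$, have pairwise disjoint supports, hence are linearly independent, and span an $N$-dimensional subspace $\mathcal{V}_N$.

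It remains to verify the sign of the quadratic form on all of $\mathcal{V}_N$. For a single generator $u_j$ one has, from Lemma \ref{Lkselfadjoint}, a splitting of $(\mathbb{L}_{\tau_N}u_j,u_j)_{H^1(D)}$ into the local term $\int_{B_\rho(x_j)}\nabla\overline{u_j}\cdot A\nabla u_j-\tau_N^2 n|u_j|^2\,\text{d}x$ and a term involving $v_{u_j}$ over $D$ carrying the sign of $I-A$, $n-1$ and $\eta$; comparing these with the corresponding constant-coefficient expressions on $B_\rho$ (which sum to zero, since there $v_\rho$ satisfies the Helmholtz equation) and using the strict sign gaps in the coefficients should give $(\mathbb{L}_{\tau_N}u_j,u_j)_{H^1(D)}\le-\delta_0\|u_j\|_{H^1(D)}^2$ (respectively $\ge\delta_0\|u_j\|_{H^1(D)}^2$) for some $\delta_0>0$ independent of $j$. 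The step I expect to be the real obstacle is that the auxiliary field $v_u$ defining $\mathbb{L}_k$ solves an elliptic problem on all of $D$, so $v_{u_j}$ is \emph{not} supported in $B_\rho(x_j)$ and the form is not block-diagonal on $\mathcal{V}_N$: one must bound the cross terms $(\mathbb{L}_{\tau_N}u_j,u_i)_{H^1(D)}$ for $i\neq j$, as well as the boundary contribution $\int_{\partial D}\eta|v_{u_j}|^2\,\text{d}s$. These are controlled by noting that outside $B_\rho(x_j)$ the field $v_{u_j}$ satisfies the homogeneous equation $\nabla\cdot(I-A)\nabla v_{u_j}+\tau_N^2(n-1)v_{u_j}=0$, whose zeroth-order coefficient is $\tau_N^2$ times a function of fixed sign, so $v_{u_j}$ is of screened type and decays exponentially on the scale $\rho_N\sim\tau_N^{-1}$ away from its ball; separating the centres as above then renders all these terms exponentially small compared with $\delta_0$, so the matrix $\big((\mathbb{L}_{\tau_N}u_j,u_i)_{H^1(D)}\big)_{i,j}$ is diagonally dominant with the correct sign and $\pm\mathbb{L}_{\tau_N}$ is non-positive on $\mathcal{V}_N$. (Alternatively, one may avoid the explicit cross-term estimate by comparing the coercive, respectively anti-coercive, form $b$ of Lemma \ref{Lkselfadjoint} and its dual min/max characterisation monotonically, coefficient by coefficient, with the constant-coefficient problem on $B_\rho$.)
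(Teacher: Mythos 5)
Your construction is the same as the paper's: pack $N$ pairwise disjoint small balls into $D$, use the constant-coefficient interior transmission problem (built from $A_{\text{min}},n_{\text{max}}$, resp.\ $A_{\text{max}},n_{\text{min}}$, with no conductive term) on each ball to produce eigenpairs at a common value $\tau$, and take the zero extensions $u_j$ of the differences of the eigenfunctions as a basis for the $N$--dimensional subspace. You also correctly flag the one genuinely delicate point: since $u\mapsto v_u$ is non-local, non-positivity of $\pm(\mathbb{L}_\tau u_j,u_j)_{H^1(D)}$ for each generator does not by itself give non-positivity on the span. But your proposed resolution of that point fails. Outside $B_j$ the field $v_{u_j}$ satisfies $\nabla\cdot(I-A)\nabla v_{u_j}+\tau^2(n-1)v_{u_j}=0$, and in \emph{both} admissible sign regimes this normalises to $-\nabla\cdot(A-I)\nabla v=\tau^2(1-n)v$ (resp.\ $-\nabla\cdot(I-A)\nabla v=\tau^2(n-1)v$) with a positive-definite principal part and a \emph{positive} zeroth-order multiplier, i.e.\ a Helmholtz-type equation, not a screened one. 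Its solutions oscillate and decay at best algebraically; since moreover $\tau_N\to\infty$, there is no exponential decay on the scale $\rho_N\sim\tau_N^{-1}$, so the diagonal-dominance argument has no basis (and the extra separation of the centres you impose does not help and is not needed).

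The correct mechanism, which is what underlies the paper's proof, requires no decay at all. Let $B_D(w,\phi):=\int_D\nabla\overline{\phi}\cdot(A-I)\nabla w-\tau^2(n-1)w\overline{\phi}\,\text{d}x-\int_{\partial D}\eta\, w\overline{\phi}\,\text{d}s$, which under the first set of hypotheses is an inner product on $H^1(D)$, and let $B_{\epsilon,j}$ denote the analogous constant-coefficient inner product on $B_j$. The identity \eqref{existTEV:1} says $B_D(w,\phi)=B_{\epsilon,j}(w_{\epsilon,j},\phi)$ for the single-ball datum; by linearity of $u\mapsto v_u$, for $u=\sum_j c_ju_j$ one gets $B_D(w,w)=\sum_j c_jB_{\epsilon,j}(w_{\epsilon,j},w)$. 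Applying the Cauchy--Schwarz inequality in each $B_{\epsilon,j}$, then over $j$, and using $\sum_jB_{\epsilon,j}(w,w)\le B_D(w,w)$ (disjointness of the balls plus the pointwise inequalities $A-I\ge(A_{\text{min}}-1)I$, $-(n-1)\ge-(n_{\text{max}}-1)$, $-\eta\ge0$) yields $B_D(w,w)\le\sum_j|c_j|^2B_{\epsilon,j}(w_{\epsilon,j},w_{\epsilon,j})$. Since $\int_D|\nabla u|^2-\tau^2|u|^2\,\text{d}x$ splits exactly over the disjoint supports, adding it and invoking the single-ball relation gives $-(\mathbb{L}_\tau u,u)_{H^1(D)}\le0$ on the whole span, with no cross-term estimate whatsoever. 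Two smaller points: the strict bound $(\mathbb{L}_{\tau}u_j,u_j)_{H^1(D)}\le-\delta_0\|u_j\|^2_{H^1(D)}$ you aim for is neither established by your sketch nor needed (only non-positivity is claimed), and the parenthetical ``min/max, coefficient by coefficient'' alternative is too vague to substitute for the argument above.
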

\begin{proof}
    We construct a finite-dimensional subspace of $H_0^1(D)$ where $-\mathbb{L}_\tau$ is non-positive (the other case is similar) by considering small disjoint balls $B_{\epsilon} \subset D$.
    Assume that $A_{\text{min}}-1>0$, $n_{\text{max}}-1<0$, and $\eta_{\text{max}}<0$. Consider the ball $B_\epsilon$ of radius $\epsilon > 0$ such that $B_\epsilon \subset D$. Using Theorems 2.5 and 2.6 from \cite{On-the-interior-TE} there exist TEVs for
    \begin{align}
\Delta v_\epsilon + \tau^2v_\epsilon = 0 \quad &\text{ and }\quad \nabla \cdot A_{\text{min}}\nabla w_\epsilon+\tau^2n_{\text{max}}w_\epsilon=0 \quad \text{in $B_\epsilon$}, \label{ballTEV:1}\\
w_\epsilon = v_\epsilon \quad &\text{ and }\quad \nu \cdot A_{\text{min}}\nabla w_\epsilon = \partial_\nu v_\epsilon  \quad \text{on $\partial B_\epsilon$.}\label{ballTEV:2}
    \end{align}
Let $u_\epsilon = w_\epsilon - v_\epsilon$ associated with the auxiliary eigenvalue $\tau$. Similar calculation as in Lemma \ref{L0coersive} leads to
% line 31
\begin{equation}
    0= \int_{B_\epsilon} |\nabla u_\epsilon|^2 - \tau^2|u_\epsilon|^2 \,\text{d}x
    + \int_{B_\epsilon} \nabla \overline{w_\epsilon} \cdot (A_{\text{min}}-1)\nabla w_\epsilon - \tau^2(n_{\text{max}}-1)|w_\epsilon|^2 \,\text{d}x. \label{existTEV:1}
\end{equation}
Since $u_\epsilon \in H_0^1(B_\epsilon)$, we can take an extension by zero to the whole domain $D$ and denote this function as $u \in H_0^1(D)$. Since $A_{\text{min}}-1>0$, $n_{\text{max}}-1<0$, and $\eta_{\text{max}}<0$, we can construct a nontrivial $v\in H^1(D)$ that solves \eqref{varform:utov} with coefficients $A$,$n$, and $\eta$ in $D$ where $w=u+v$.
Adding
$$\int_D \nabla u \cdot \nabla \overline{\phi} - \tau^2u\overline{\phi} \,\text{d}x$$
to both sides of \eqref{varform:utov} and rearranging the terms, we obtain that, for any $\phi \in H^1_0(D)$,
\begin{align}
    & \int_{D} \nabla \overline{\phi} \cdot (A-I)\nabla w - \tau^2(n-1)w\overline{\phi} \,\text{d}x\ - \int_{\partial D} \eta w \overline{\phi} \,\text{d}s \nonumber \\
    =& - \int_D \nabla u\cdot \nabla \overline{\phi} - \tau^2u\overline{\phi} \,\text{d}x \nonumber\\
    =& - \int_{B_\epsilon} \nabla u_\epsilon \cdot \nabla \overline{\phi} - \tau^2u_\epsilon\overline{\phi} \,\text{d}x\nonumber\\
    =& \int_{B_\epsilon} \nabla \overline{\phi} \cdot (A_{\text{min}}-1)\nabla w_\epsilon - \tau^2(n_{\text{max}}-1)w_\epsilon\overline{\phi} \,\text{d}x. \label{existTEV:1}
\end{align}
The last line is obtained by replacing $w_\epsilon$ by $u_\epsilon$ in \eqref{ballTEV:1}--\eqref{ballTEV:2}. Since $A_{\text{min}}-1>0$, $n_{\text{max}}-1<0$, the right-hand side of \eqref{existTEV:1} is an inner product on $H^1(B_\epsilon)$. Let $\phi = w$ in \eqref{existTEV:1} to obtain
\begin{align*}
    \int_{D} \nabla \overline{w} \cdot (A-I)\nabla w &- \tau^2(n-1)|w|^2 \,\text{d}x- \int_{\partial D} \eta |w|^2 \,\text{d}s\\
    &= \int_{B_\epsilon} \nabla \overline{w} \cdot (A_{\text{min}}-1)\nabla w_\epsilon - \tau^2(n_{\text{max}}-1)w_\epsilon\overline{w} \,\text{d}x.
\end{align*}
Using the Cauchy-Schwartz inequality and $\eta_{\text{max}}<0$, we obtain that
\begin{align}
    \int_{D} \nabla \overline{w} \cdot (A-I)\nabla w &- \tau^2(n-1)|w|^2 \,\text{d}x- \int_{\partial D} \eta |w|^2 \,\text{d}s\nonumber \\
    &\leq \int_{B_\epsilon} \nabla \overline{w_\epsilon} \cdot (A_{\text{min}}-1)\nabla w_\epsilon - \tau^2(n_{\text{max}}-1)|w_\epsilon|^2 \,\text{d}x. \label{existTEV:2}
\end{align}
From the proof of Lemma \ref{L0coersive},
\begin{align*}
 -\big(\mathbb{L}_\tau u,u \big)_{H^1(D)} = \int_D \nabla \overline{w} \cdot (A-I)\nabla w &- \tau^2(n-1)|w|^2 \,\text{d}x - \int_{\partial D} \eta |w|^2 \,\text{d}s \\
        &+ \int_{D} |\nabla u|^2-\tau^2|u|^2 \,\text{d}x .
 \end{align*}
By the construction of $u$ and using  \eqref{existTEV:1}-\eqref{existTEV:2} we have that
    \begin{align*}
       - \big(\mathbb{L}_\tau u,u \big)_{H^1(D)} &= \int_D \nabla \overline{w} \cdot (A-I)\nabla w - \tau^2(n-1)|w|^2 \,\text{d}x  - \int_{\partial D} \eta |w|^2 \,\text{d}s \\
        &\hspace{+1in} + \int_{B_{\epsilon}} |\nabla u_\epsilon|^2-\tau^2|u_\epsilon|^2 \,\text{d}x\ \\ 
        &\leq \int_{B_\epsilon} \nabla \overline{w_\epsilon} \cdot (A_{\text{min}}-1)\nabla w_\epsilon - \tau^2(n_{\text{max}}-1)|w_\epsilon|^2 \,\text{d}x\ \\
        &\hspace{+1in} + \int_{B_{\epsilon}} |\nabla u_\epsilon|^2-\tau^2|u_\epsilon|^2 \,\text{d}x = 0.
    \end{align*}
Thus the operator $-\mathbb{L}_\tau$ is non-positive on this one--dimensional subspace. 

To construct an $N$--dimensional subspace for any $N \in \N$ we let $B_j$ be the ball centered at $x_j \in D$ with radius $\epsilon >0$. Let $\epsilon$ be sufficiently small such that $B_i \cap B_j = \emptyset$ for all $i \neq j$. Then, consider \eqref{ballTEV:1}--\eqref{ballTEV:2} in each $B_j$ and let $u_j \in H^1_0(D)$ be the difference of the eigenfunctions extended to $D$ by zero with eigenvalue $\tau$. Note that $\text{span}\{ {u}_j : \, j=1, \cdots , N\}$ is an $N$--dimensional subspace of $H^1_{0}(D)$ due to the disjoint supports. Hence $-\mathbb{L}_{\tau}$ is non-positive on this $N$--dimensional subspace of $H^1_{0}(D)$. The case for $\mathbb{L}_\tau$  can be handled similarly.
\end{proof}

% By the above analysis presented in this section, we get the main result of the section. 
The following theorem states that there exists infinitely many transmission eigenvalues.
\begin{theorem}\label{TEVexist}
    Assume that either $A_{\text{min}}-1>0$, $n_{\text{max}}-1<0$, and $\eta_{\text{max}}<0$, or  $A_{\text{max}}-1<0$, $n_{\text{min}}-1>0$, and $\eta_{\text{min}}>0$. Then there exists infinitely many real transmission eigenvalues $k > 0$. \label{thm-exist}
\end{theorem}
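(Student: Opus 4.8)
The plan is to apply the standard Courant--Fischer / min-max machinery for self-adjoint operator-valued analytic families, exactly in the spirit of \cite{On-the-interior-TE}. All the ingredients have already been assembled in the preceding lemmas, so the proof is essentially a bookkeeping argument that strings them together.

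First I would treat the case $A_{\text{min}}-1>0$, $n_{\text{max}}-1<0$, $\eta_{\text{max}}<0$; the other case is symmetric, with $\mathbb{L}_k$ in place of $-\mathbb{L}_k$ and with the threshold $\lambda_1(D)$ replaced by $\tfrac{A_{\text{min}}}{n_{\text{max}}}\lambda_1(D)$. By Lemma \ref{Lkselfadjoint} the family $\mathbb{L}_k$ is self-adjoint for every $k \in \mathbb{R}_{\geq 0}$, and by Lemma \ref{Lk-L0compact} it is a compact perturbation of $\mathbb{L}_0$; since $\mathbb{L}_k$ is continuous in $k$, the operator $-\mathbb{L}_k = (-\mathbb{L}_0) + (\mathbb{L}_0-\mathbb{L}_k)$ is a continuous family of self-adjoint Fredholm operators, and by Lemma \ref{L0coersive} the reference operator $-\mathbb{L}_0$ is coercive, hence has trivial kernel and positive essential spectrum bounded away from zero. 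Recall that $k>0$ is a transmission eigenvalue precisely when $\mathbb{L}_k$ has nontrivial kernel. Now fix $N \in \mathbb{N}$. By Lemma \ref{Lk:positive}(1), at $k=0$ (indeed for all $k^2<\lambda_1(D)$, but $k=0$ suffices) $-\mathbb{L}_0$ is positive definite, so its $j$-th Courant--Fischer eigenvalue
\[
\mu_j(k) := \max_{\substack{V \subset H_0^1(D) \\ \dim V = j-1}} \ \min_{\substack{u \in V^\perp \\ \|u\|_{H^1(D)}=1}} \big(-\mathbb{L}_k u, u\big)_{H^1(D)}
\]
satisfies $\mu_j(0)>0$ for every $j$. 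On the other hand, by Lemma \ref{thm-nonpos} there is a $\tau>0$ and an $N$-dimensional subspace $W_N \subset H_0^1(D)$ on which $-\mathbb{L}_\tau$ is non-positive; feeding $W_N$ into the min-max characterization forces $\mu_N(\tau) \leq 0$.

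The function $k \mapsto \mu_j(k)$ is continuous on $\mathbb{R}_{\geq 0}$ (this follows from the norm-continuity of $k \mapsto \mathbb{L}_k$ together with the Lipschitz dependence of Courant--Fischer values on the operator), so for each $j \in \{1,\dots,N\}$ with $\mu_j(0)>0 \geq \mu_j(\tau)$ the intermediate value theorem produces some $k_j \in (0,\tau]$ with $\mu_j(k_j)=0$; and $\mu_j(k_j)=0$ together with the fact that $-\mathbb{L}_{k_j}$ has discrete spectrum below its (positive) essential spectrum means exactly that $0$ is an eigenvalue of $\mathbb{L}_{k_j}$, i.e.\ $k_j$ is a real transmission eigenvalue. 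Since $\mu_1 \geq \mu_2 \geq \cdots$ we in fact obtain at least $N$ such values counted with the ordering; more to the point, since $N \in \mathbb{N}$ was arbitrary and Lemma \ref{thm-nonpos} supplies the subspace $W_N$ for every $N$, and since by Lemma \ref{Lk:positive} every real transmission eigenvalue is bounded below by the fixed Faber--Krahn threshold $\lambda_1(D)>0$ (so the eigenvalues cannot accumulate at $0$ and genuinely new ones must appear as $N$ grows), the set of real transmission eigenvalues $k>0$ is infinite.

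I expect the main obstacle to be stated cleanly rather than genuinely hard: one must be careful that the min-max values $\mu_j(k)$ really do detect eigenvalues crossing zero, which requires knowing that $-\mathbb{L}_k$ has a uniform spectral gap above zero coming from its essential spectrum (inherited from the coercive $-\mathbb{L}_0$ through the compact perturbation Lemma \ref{Lk-L0compact}) so that $\mu_j(k)=0$ cannot be an accumulation point of essential spectrum but must be a genuine eigenvalue. The one quantitative point to nail down is that the lower bound $\lambda_1(D)$ (resp.\ $\tfrac{A_{\text{min}}}{n_{\text{max}}}\lambda_1(D)$) on transmission eigenvalues from Lemma \ref{Lk:positive}, combined with the arbitrariness of $N$ in Lemma \ref{thm-nonpos}, rules out the degenerate possibility that only finitely many distinct eigenvalues of high multiplicity occur; this is handled exactly as in \cite{On-the-interior-TE,ayala2022analysis,periodictevs} and we refer the reader there for the details of that counting argument.
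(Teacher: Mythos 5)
Your proposal is correct and takes essentially the same route as the paper: the paper's proof is a one-line appeal to Theorem 2.6 of Cakoni--Kirsch, which is precisely the self-adjoint min-max/intermediate-value argument you spell out, fed with the same ingredients (Lemmas \ref{Lkselfadjoint}, \ref{L0coersive}, \ref{Lk-L0compact}, \ref{Lk:positive}, and \ref{thm-nonpos}). The only small point to tidy is that the passage from ``at least $N$ eigenvalues counted with multiplicity for every $N$'' to ``infinitely many distinct eigenvalues'' rests on the finite-dimensionality of $\ker \mathbb{L}_k$ (zero being an isolated eigenvalue below the positive essential spectrum inherited from $\pm\mathbb{L}_0$), not on the Faber--Krahn lower bound, though you already have that ingredient in hand.
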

\begin{proof}
The theorem is a direct consequence of the lemmas proved in this section and Theorem 2.6 of \cite{On-the-interior-TE} (see also \cite{CCH-book}). 
\end{proof}

\section{Dependence on the parameters}
In this section, we study how the TEVs depend on the material parameters. It has been shown in previous works \cite{cavities,cakoni2014homogenization,te-cbc3} that the TEVs can be used to estimate or determine the material parameters associated with the scattering problem. This falls under the category of inverse spectral problems where one wishes to infer information about the physical model from the knowledge of an associated eigenvalue problem. We begin by proving the monotonicity of the first (i.e. smallest positive) TEV with respect to the coefficients. Then we consider limiting problems as $\eta \to 0$ and $\eta \to \pm \infty.$ Here, the limits are understood in the sense that there are sequences of $\eta_j, j \in \N$ such that 
$$ \| \eta_j \|_{L^\infty (\partial D)} \longrightarrow 0 \quad \text{or } \quad \inf\limits_{\partial D} | \eta_j | \longrightarrow \infty \quad \text{as } \quad j \to \infty .$$ 
See e.g. \cite{ayala2022analysis,te-cbc2,te-cbc3,LKtelimit} for other studies of limiting values for some TEV problems.   

%Monoticity results
\subsection{Monotonicity Results}\label{TEVmono}
We consider the monotonicity of the first TEV, denoted by $k_1$, with respect to the parameters $A,n$ and $\eta$. Note that  $k_1>0$ is the smallest value in $\R_{\geq 0}$ where the function $f(k) = \min_{u \neq 0}(\mathbb{L}_k u , u)_{H^1(D)}$ is non-positive (or non-negative) by Lemma \ref{Lk:positive} and the continuity of the mapping $k \mapsto \mathbb{L}_k$ from $\R_{\geq 0}$ to the set of bounded linear operators from $H^1_0(D)$ into itself. In our analysis, we say that $A_1 \leq A_2$, for matrix valued functions $A_1$ and $A_2$, if $A_2-A_1$ is semi-positive definite.

\begin{theorem}
    Assume that $A_{j}-I>0$, $n_{j}-1<0$, and $\eta_{j}<0$ for $j=1,2$. 
    \begin{enumerate}
        \item[(1)] If $n_1 \leq n_2$ and $\eta_1 \leq \eta_2$ then $k_1(n_1 , \eta_1) \leq k_1(n_2 , \eta_2)$.
        \item[(2)] If $A_1 \leq A_2$  then $k_1(A_2) \leq k_1(A_1)$.
    \end{enumerate}
\label{thm-mono1}
\end{theorem}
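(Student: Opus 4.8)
The plan is to deduce the ordering of $k_1$ from a pointwise (in $u$ and $k$) monotonicity of the quadratic form $u\mapsto-(\mathbb{L}_ku,u)_{H^1(D)}$ in the coefficients, obtained from a concave maximization principle. Under the hypotheses of the theorem we are in the first case of Section~\ref{section-exist} ($A-I>0$, $n-1<0$, $\eta<0$), so the lemmas of that section apply: $-\mathbb{L}_k$ is positive for $k^2<\lambda_1(D)$ (Lemma~\ref{Lk:positive}), is non-positive on subspaces of arbitrarily large dimension at some $\tau>0$ (Lemma~\ref{thm-nonpos}), is self-adjoint, depends continuously on $k$, and is a compact perturbation of the coercive $-\mathbb{L}_0$ (Lemmas~\ref{L0coersive} and~\ref{Lk-L0compact}). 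Hence the continuous function $g(k)$ defined as the infimum of the Rayleigh quotients $-(\mathbb{L}_ku,u)_{H^1(D)}/\|u\|^2_{H^1(D)}$ over $u\in H_0^1(D)\setminus\{0\}$ is positive for small $k\geq 0$, satisfies $g(\tau)\leq 0$, and $k_1$ — the first transmission eigenvalue provided by Theorem~\ref{TEVexist} — is its first zero. Writing $g=g_{(A,n,\eta)}$ to record the dependence, I will show that a favorable change of coefficients makes $g$ pointwise larger or smaller, hence moves its first zero accordingly.

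The crux is a maximization formula for $-(\mathbb{L}_ku,u)_{H^1(D)}$. Substituting $v=w-u$ into \eqref{varform:utov} and using $u\in H_0^1(D)$ shows that $w=u+v_u$ is the unique element of $H^1(D)$ satisfying $\mathcal{B}_k(w,\phi)=\mathcal{L}_{k,u}(\phi)$ for all $\phi\in H^1(D)$, where
\begin{align*}
\mathcal{B}_k(z,\phi)&=\int_D\nabla\overline{\phi}\cdot(I-A)\nabla z+k^2(n-1)z\overline{\phi}\,\text{d}x+\int_{\partial D}\eta z\overline{\phi}\,\text{d}s,\\
\mathcal{L}_{k,u}(\phi)&=\int_D\nabla\overline{\phi}\cdot\nabla u-k^2u\overline{\phi}\,\text{d}x.
\end{align*}
Since $A$ is real symmetric, $\mathcal{B}_k$ is a bounded Hermitian form; under the theorem's hypotheses it is negative definite on all of $H^1(D)$ for every $k\geq 0$ (this is exactly the coercivity used to solve \eqref{varform:utov}), while $\mathcal{L}_{k,u}$ involves neither $A$, $n$, nor $\eta$. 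Consequently $z\mapsto\mathcal{B}_k(z,z)-2\,\mathrm{Re}\,\mathcal{L}_{k,u}(z)$ is strictly concave with unique maximizer $w$, and evaluating at $w$ (using the variational equation with $\phi=w$) gives its maximum as $-\mathcal{B}_k(w,w)$. Combining this with the identity $-(\mathbb{L}_ku,u)_{H^1(D)}=\int_D|\nabla u|^2-k^2|u|^2\,\text{d}x-\mathcal{B}_k(w,w)$, which is read off from the computations in the proof of Lemma~\ref{L0coersive}, I obtain
\[
-(\mathbb{L}_ku,u)_{H^1(D)}=\int_D|\nabla u|^2-k^2|u|^2\,\text{d}x+\max_{z\in H^1(D)}\Big[\mathcal{B}_k(z,z)-2\,\mathrm{Re}\,\mathcal{L}_{k,u}(z)\Big].
\]

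From this the result is immediate. For fixed $z$, $\mathcal{B}_k(z,z)$ is nondecreasing in $n$ and in $\eta$ (the coefficients $k^2(n-1)$ of $|z|^2$ on $D$ and $\eta$ of $|z|^2$ on $\partial D$ both increase), and nonincreasing in $A$ for the ordering $A_1\leq A_2$ (then $I-A_2\leq I-A_1$ as matrices, so $\nabla\overline{z}\cdot(I-A)\nabla z$ decreases pointwise); the remaining term $\int_D|\nabla u|^2-k^2|u|^2$ is coefficient-free, and a pointwise inequality between the integrands is preserved by the unconstrained $\max_z$. Hence for every $u\in H_0^1(D)$ and every $k\geq 0$, $-(\mathbb{L}_ku,u)_{H^1(D)}$ is nondecreasing in $(n,\eta)$ and nonincreasing in $A$; taking the infimum over $u\neq 0$ gives $g_{(A,n_1,\eta_1)}\leq g_{(A,n_2,\eta_2)}$ when $n_1\leq n_2$, $\eta_1\leq\eta_2$, and $g_{(A_2,n,\eta)}\leq g_{(A_1,n,\eta)}$ when $A_1\leq A_2$. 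Since these functions are all positive near $k=0$, the larger of two such functions has its first zero no earlier than the smaller one, which yields $k_1(n_1,\eta_1)\leq k_1(n_2,\eta_2)$ for (1) and $k_1(A_2)\leq k_1(A_1)$ for (2). The step demanding genuine care is the maximization formula — checking that $\mathcal{B}_k$ is negative definite on the whole of $H^1(D)$ for all $k\geq 0$ (so the concave maximization is legitimate and its maximizer really is the $w$ coming from \eqref{varform:utov}) and that the maximum is over an unconstrained $z$ (the conductive term lives inside $\mathcal{B}_k$, so no boundary constraint on $z$ is imposed). Once this is in place, monotonicity follows with no differentiation of eigenvalues, purely from the monotone dependence of the coefficients of $\mathcal{B}_k$.
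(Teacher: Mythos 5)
Your proof is correct, and it reaches the conclusion by a genuinely different route than the paper. The paper argues at the single value $\tau=k_1(A,n_2,\eta_2)$: it takes the eigenfunction difference $u_\tau$, solves the auxiliary problem \eqref{varform:utov} with coefficients $(n_1,\eta_1)$ and the same datum $u_\tau$, and uses the resulting variational identity together with a Cauchy--Schwarz estimate in the inner product induced by the ``larger'' coefficients to show that $-(\mathbb{L}_\tau u_\tau,u_\tau)_{H^1(D)}\le 0$ for the operator built from $(n_1,\eta_1)$; it then concludes from non-positivity on a one-dimensional subspace and the continuity of $k\mapsto\mathbb{L}_k$. You instead upgrade the same underlying structure to a pointwise statement: since the auxiliary solution $w$ is the unique maximizer of the strictly concave functional $z\mapsto\mathcal{B}_k(z,z)-2\,\mathrm{Re}\,\mathcal{L}_{k,u}(z)$, with maximum value $-\mathcal{B}_k(w,w)$, the quantity $-(\mathbb{L}_ku,u)_{H^1(D)}$ becomes a supremum of functionals whose dependence on $(A,n,\eta)$ is manifestly monotone while the data term $\mathcal{L}_{k,u}$ is coefficient-free; this orders the Rayleigh quotients for every $u$ and $k$ at once, and hence the first zeros. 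The paper's Cauchy--Schwarz step is precisely the ``evaluate the concave functional at a suboptimal competitor'' move of your max principle in disguise, so the two proofs share the same engine; what yours buys is that no deformation argument is needed at the end (ordered continuous functions that start positive have ordered first zeros), and the pointwise inequality would also order higher eigenvalues via a max--min characterization. The point you rightly single out --- uniform negative definiteness of $\mathcal{B}_k$ on all of $H^1(D)$ for $k\ge 0$ under $A-I>0$, $n-1<0$, $\eta<0$ --- follows from the equivalence of $\|\cdot\|^2_{H^1(D)}$ with $\|\nabla\cdot\|^2_{L^2(D)}+\|\cdot\|^2_{L^2(\partial D)}$ already used in Section~\ref{section-discr}, and is the same coercivity the paper invokes to solve \eqref{varform:utov}, so there is no gap there.
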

\begin{proof}
    We shall prove (1). Claim (2) can be proved in a similar way. If $n_1 \leq n_2$, then $-(n_1-1) \geq -(n_2-1) > 0$. Let  $\tau=k_1(A,n_2,\eta_2)$ and assume that $A>I$ with $\eta_{2}<0$, and $n_2-1<0$. Letting $v_\tau$ and $w_\tau$ denote the transmission eigenfunctions corresponding to the transmission eigenvalue $\tau$. From the proof of Lemma \ref{L0coersive} we have that 
    \begin{align*}
    0&=\int_{D} |\nabla u_\tau|^2 - \tau^2|u_\tau|^2 \,\text{d}x + \int_{D} \nabla \overline{w_\tau} \cdot (A-I)\nabla w_\tau - \tau^2(n_2-1)|w_\tau|^2 \, \text{d}x\ - \int_{\partial D} \eta_2 |w_\tau|^2 \,\text{d}s
    \end{align*}
    where $u_\tau = w_\tau-v_\tau$. 
    
By the well-posedness of \eqref{varform:utov}, we have the existence of $v \in H^1(D)$ with $u=u_\tau$, $n=n_1$, $\eta=\eta_1$ and $k=\tau$ where $w=v+u_\tau$. From \eqref{varform:utov} along with similar calculations as in Lemma \ref{thm-nonpos}, we have 
\begin{align*}
\int_{D} \nabla \overline{\phi} \cdot (A-I)\nabla w &- \tau^2(n_1-1)w\overline{\phi} \,\text{d}x- \int_{\partial D} \eta_1 w \overline{\phi} \,\text{d}s\\
        & = - \int_D \nabla u_\tau \cdot \nabla \overline{\phi} - \tau^2 u_\tau \overline{\phi} \,\text{d}x\\
        &= \int_{D} \nabla \overline{\phi} \cdot (A-I)\nabla w_\tau - \tau^2(n_2-1)w_\tau\overline{\phi} \,\text{d}x- \int_{\partial D} \eta_2 w_\tau \overline{\phi} \,\text{d}s.
    \end{align*}
    The right-hand side of the above equality is an inner-product on $H^1(D)$ due to the assumptions on the coefficients. Letting $\phi = w$, it holds that
    \begin{align*}
        \int_{D} \nabla \overline{w} \cdot (A-I)\nabla w &- \tau^2(n_1-1)|w|^2\,\text{d}x - \int_{\partial D} \eta_1 |w|^2 \,\text{d}s\\
        &= \int_{D} \nabla \overline{w} \cdot (A-I)\nabla w_\tau - \tau^2(n_2-1)w_\tau \overline{w} \,\text{d}x- \int_{\partial D} \eta_2 w_\tau \overline{w} \,\text{d}s.
    \end{align*}
We employ the Cauchy-Schwartz inequality to obtain
    \begin{align*}
        \int_{D} \nabla \overline{w} \cdot (A-I)\nabla w &- \tau^2(n_1-1)|w|^2\,\text{d}x- \int_{\partial D} \eta_1 |w|^2 \,\text{d}s\\
        &\leq \int_{D} \nabla \overline{w_\tau} \cdot (A-I)\nabla w_\tau - \tau^2(n_2-1)|w_\tau|^2 \,\text{d}x- \int_{\partial D} \eta_2 |w_\tau|^2 \,\text{d}s.
    \end{align*}
We have used that $-(n_1-1) \geq -(n_2-1) > 0$ as well as $-\eta_1 \geq - \eta_2>0$.

Denote by $\mathbb{L}_\tau$ the operator with coefficients $A,n_1$ and $\eta_1$. It holds that
    \begin{align*}
        -\big(\mathbb{L}_{\tau}u_\tau ,u_\tau \big)_{H^1(D)} &= \int_D \nabla \overline{w} \cdot (A-I)\nabla w - \tau^2(n_1-1)|w|^2 \,dx - \int_{\partial D} \eta_1 |w|^2 \,ds\\
        &\hspace{1in}+ \int_{D} |\nabla u_\tau|^2-\tau^2|u_\tau|^2 \,dx\ \\
        &\leq \int_D \nabla \overline{w_\tau} \cdot (A-I)\nabla w_\tau - \tau^2(n_2-1)|w_\tau|^2 \,dx - \int_{\partial D} \eta_2 |w_\tau|^2 \,ds\ \\
        &\hspace{1in}+ \int_{D} |\nabla u_\tau|^2-\tau^2|u_\tau|^2 \,dx\ = 0 .
    \end{align*}
This implies that $-\mathbb{L}_{\tau}$ associated with $A,n_1$ and, $\eta_1$ is nonpositive on this one-dimensional subspace spanned by $u_\tau$. Then the continuous dependence of $k \mapsto \mathbb{L}_{k}$ implies the existence of a TEV corresponding to $A,n_1$ and $\eta_1$ in $(0, \tau]$ with $\tau=k_1(A,n_2,\eta_2)$. The proof is complete. 
\end{proof}

Similar monotonicity results hold for $A_{\text{max}}-1<0$, $n_{\text{min}}-1>0$, and $\eta_{\text{min}}>0$. 
\begin{theorem}
    Assume that $A_{j}-I<0$, $n_{j}-1>0$, and $\eta_{j}>0$ for $j=1,2$. 
    \begin{enumerate}
        \item If $n_1 \leq n_2$ and $\eta_1 \leq \eta_2$ then we have that  $k_1(n_2 , \eta_2) \leq k_1(n_1 , \eta_1)$.
        \item If $A_1 \leq A_2$ then we have that  $k_1(A_1) \leq k_1(A_2)$.
    \end{enumerate}
    where $k_1$ corresponds to the first transmission eigenvalue. \label{thm-mono2}
\end{theorem}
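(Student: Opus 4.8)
The plan is to run, essentially verbatim, the argument in the proof of Theorem \ref{thm-mono1}, with all coefficient inequalities reversed: in the regime $A_{\max}-1<0$, $n_{\min}-1>0$, $\eta_{\min}>0$ it is $\mathbb{L}_k$ itself (rather than $-\mathbb{L}_k$) that is positive for small $k$ by part (2) of Lemma \ref{Lk:positive}, so $k_1$ of a given coefficient triple is the smallest $k>0$ at which $\mathbb{L}_k$ stops being positive. Recall from the proof of Lemma \ref{L0coersive} that, for $u\in H_0^1(D)$ and $w=u+v_u$ with $v_u$ the solution of \eqref{varform:utov},
\[
\big(\mathbb{L}_k u,u\big)_{H^1(D)}=b_k(w,w)-\int_D |\nabla u|^2-k^2|u|^2\,\text{d}x,
\]
where $b_k(w,\phi):=\int_D \nabla\overline{\phi}\cdot(I-A)\nabla w+k^2(n-1)w\overline{\phi}\,\text{d}x+\int_{\partial D}\eta w\overline{\phi}\,\text{d}s$ is, under the standing assumptions, a Hermitian positive-definite sesquilinear form on $H^1(D)$, and, by rearranging \eqref{varform:utov} as in the proof of Lemma \ref{thm-nonpos} (add $\int_D \nabla\overline{\phi}\cdot\nabla u-k^2u\overline{\phi}\,\text{d}x$ to both sides and use $w=u+v$ with $w=v$ on $\partial D$), $w$ is characterised by
\[
b_k(w,\phi)=\int_D \nabla\overline{\phi}\cdot\nabla u-k^2 u\overline{\phi}\,\text{d}x \qquad \text{for all }\phi\in H^1(D).
\]
The crucial observation is that this right-hand side depends only on $u$ and $k$, not on $A$, $n$ or $\eta$. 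Hence, by continuity of $k\mapsto\mathbb{L}_k$, coercivity of $\mathbb{L}_0$ (Lemma \ref{L0coersive}), and Theorem 2.6 of \cite{On-the-interior-TE}, to prove $k_1\le\tau$ for one coefficient set it suffices to produce a nonzero $u_\tau\in H_0^1(D)$ with $\big(\mathbb{L}_\tau u_\tau,u_\tau\big)_{H^1(D)}\le0$ for that set, $\tau$ being the first transmission eigenvalue of the other set.

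For part (1) I set $\tau=k_1(n_1,\eta_1)$, take a corresponding transmission eigenpair $(w_\tau,v_\tau)$, and put $u_\tau=w_\tau-v_\tau$, which is nonzero as it lies in the nontrivial nullspace of $\mathbb{L}_\tau$. Let $b_\tau^{(1)},b_\tau^{(2)}$ denote the form $b_\tau$ formed with $(n_1,\eta_1)$ and $(n_2,\eta_2)$ (same $A$), and let $\widetilde{w}=u_\tau+\widetilde{v}$ with $\widetilde{v}$ the solution of \eqref{varform:utov} for data $u_\tau$ and coefficients $A,n_2,\eta_2$. Since $\tau$ is a transmission eigenvalue for the first set, $b_\tau^{(1)}(w_\tau,w_\tau)=\int_D |\nabla u_\tau|^2-\tau^2|u_\tau|^2\,\text{d}x$; since $w_\tau$ and $\widetilde{w}$ obey the characterisation above with identical right-hand side, $b_\tau^{(2)}(\widetilde{w},\phi)=b_\tau^{(1)}(w_\tau,\phi)$ for all $\phi$, so with $\phi=\widetilde{w}$ we get $b_\tau^{(2)}(\widetilde{w},\widetilde{w})=b_\tau^{(1)}(w_\tau,\widetilde{w})$. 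As $n_1\le n_2$ and $\eta_1\le\eta_2$ give $b_\tau^{(1)}(\phi,\phi)\le b_\tau^{(2)}(\phi,\phi)$ for all $\phi$, Cauchy--Schwarz for $b_\tau^{(1)}$ yields
\[
b_\tau^{(2)}(\widetilde{w},\widetilde{w})=b_\tau^{(1)}(w_\tau,\widetilde{w})\le b_\tau^{(1)}(w_\tau,w_\tau)^{1/2}\,b_\tau^{(1)}(\widetilde{w},\widetilde{w})^{1/2}\le b_\tau^{(1)}(w_\tau,w_\tau)^{1/2}\,b_\tau^{(2)}(\widetilde{w},\widetilde{w})^{1/2},
\]
so $b_\tau^{(2)}(\widetilde{w},\widetilde{w})\le b_\tau^{(1)}(w_\tau,w_\tau)$. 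Hence the operator $\mathbb{L}_\tau$ with coefficients $A,n_2,\eta_2$ satisfies $\big(\mathbb{L}_\tau u_\tau,u_\tau\big)_{H^1(D)}=b_\tau^{(2)}(\widetilde{w},\widetilde{w})-\int_D|\nabla u_\tau|^2-\tau^2|u_\tau|^2\,\text{d}x\le0$, giving $k_1(n_2,\eta_2)\le\tau=k_1(n_1,\eta_1)$.

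Part (2) is handled identically with $\tau=k_1(A_2)$, $n$ and $\eta$ held fixed, and $\widetilde{w}$ now the solution of \eqref{varform:utov} for data $u_\tau$ and coefficients $A_1,n,\eta$. The only change is the direction of the form comparison: $A_1\le A_2$ gives $I-A_1\ge I-A_2$, hence $b_\tau^{(A_2)}(\phi,\phi)\le b_\tau^{(A_1)}(\phi,\phi)$; deriving $b_\tau^{(A_1)}(\widetilde{w},\widetilde{w})=b_\tau^{(A_2)}(w_\tau,\widetilde{w})$ and applying Cauchy--Schwarz for $b_\tau^{(A_2)}$ then yields $b_\tau^{(A_1)}(\widetilde{w},\widetilde{w})\le b_\tau^{(A_2)}(w_\tau,w_\tau)$, so the operator built from $A_1$ has $\big(\mathbb{L}_\tau u_\tau,u_\tau\big)_{H^1(D)}\le0$ and $k_1(A_1)\le\tau=k_1(A_2)$. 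I do not expect a genuine obstacle, since everything reduces to the machinery of Theorem \ref{thm-mono1}; the point requiring care is the bookkeeping — one must initialise $\tau$ at the coefficient set with the \emph{smaller} form $b_\tau$ (so that the other set is recovered as the upper bound) and order the two comparison steps (Cauchy--Schwarz, then monotonicity of $b_\tau$) so they reinforce, which is delicate precisely because $A$ enters $b_\tau$ through $I-A$ while $n$ and $\eta$ enter with a plus sign, so the monotone directions in (1) and (2) are opposite.
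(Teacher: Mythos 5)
Your argument is correct and is exactly the route the paper intends: its proof of this theorem is simply ``similar to Theorem \ref{thm-mono1} and thus omitted,'' and you have carried out that mirrored argument faithfully, with the sign reversals ($\mathbb{L}_k$ positive for small $k$, the form $b_k$ built from $I-A$, $n-1$, $\eta$ all positive) and the initialisation of $\tau$ at the coefficient set with the smaller form handled correctly. The only cosmetic difference is that you isolate the coefficient-independence of the right-hand side $\int_D \nabla\overline{\phi}\cdot\nabla u - k^2 u\overline{\phi}\,\mathrm{d}x$ as the organising principle, which is a clean restatement of the same Cauchy--Schwarz-plus-form-monotonicity mechanism used in the paper's proof of Theorem \ref{thm-mono1}.
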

\begin{proof}
    The proof is similar to that of Theorem \ref{thm-mono1} and thus omitted.
\end{proof}

%%%%%%%%%%%%%%%%%%%%%%%%%%%%%%%%%%%%%%%%%%%%%%%%%%%%
% Transmission eigenvalue problem with \eta = 0
\subsection{Convergence of the TEVs as $\eta \longrightarrow 0$} \label{sec-etato0}
Now we give a more in depth study of the TEVs on the dependence on $\eta$. In particular, we are interested in the limiting behavior as $\eta \to 0$. Let $k_\eta$ correspond to  a TEV for $\eta \neq 0$ and assume that it forms a bounded set. Then $k_\eta$ has a limit point, denoted by $k_0$, as $\eta \to 0$. By Lemma \ref{Lk:positive} and Theorems \ref{thm-mono1} and \ref{thm-mono2}, $k_\eta$ converges as $\eta \to 0$ monotonically. 

Define the Hilbert space 
$$X(D):= \left\{ (\phi_1 , \phi_2) \in H^1(D) \times H^1(D) \,\, : \,\, \phi_1 = \phi_2 \,\, \text{on} \,\, \partial D \right\}$$
with the standard norm.
Assume that the non-trivial eigenfunctions $(w_\eta , v_\eta )\in X(D)$ satisfying \eqref{TE:1}--\eqref{TE:2} are normalized in the $L^2(D)\times L^2(D)$ norm such that 
 $$||w_\eta||^2_{L^2(D)} + ||v_\eta||^2_{L^2(D)} = 1$$
for any $\eta \neq 0$. We show that the eigenfunctions are bounded in $X(D)$ as $\eta \to 0$.

\begin{lemma} \label{bounded:etato0}
Assume that either $A_{\text{min}}-1>0$ or $A_{\text{max}}-1<0$ and that $k_\eta$ is bounded then $(w_\eta , v_\eta ) \in X(D)$ is bounded as $\eta \to 0$.
\end{lemma}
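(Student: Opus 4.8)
The plan is to test the weak formulation \eqref{varform:utov} with well-chosen test functions and extract an a priori bound on $\|(w_\eta,v_\eta)\|_{X(D)}$ uniform in $\eta$ as $\eta\to 0$. Recall that $u_\eta := w_\eta - v_\eta \in H^1_0(D)$ and that $(w_\eta,v_\eta)$ solves \eqref{TE:1}--\eqref{TE:2} with $k=k_\eta$. Since $k_\eta$ is bounded, I may pass to a subsequence along which $k_\eta \to k_0$ and all the $k_\eta^2$-dependent coefficients are uniformly bounded; this reduces the problem to a coercivity estimate with constants independent of $\eta$.

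First I would take $\phi = w_\eta$ in the identity \eqref{varform:utov} (with $u=u_\eta$, $v=v_\eta$), producing, after the manipulations in the proof of Lemma~\ref{L0coersive} (the derivation of \eqref{-L0coersive:2}), the relation
\begin{align*}
\int_D \nabla \overline{w_\eta}\cdot\nabla u_\eta - k_\eta^2 u_\eta\overline{w_\eta}\,\text{d}x = \int_D \nabla\overline{w_\eta}\cdot(I-A)\nabla w_\eta + k_\eta^2(n-1)|w_\eta|^2\,\text{d}x + \int_{\partial D}\eta|w_\eta|^2\,\text{d}s.
\end{align*}
In the case $A_{\text{max}}-1<0$ (so $1-A>0$) together with whatever sign condition on $\eta$ is in force near $0$, the right side controls $\|\nabla w_\eta\|^2_{L^2(D)}$ up to the $L^2$ mass of $w_\eta$, which is $\leq 1$ by normalization, and up to the boundary term $\int_{\partial D}\eta|w_\eta|^2\,\text{d}s$. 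That boundary term is the delicate piece: it must be absorbed. I would estimate it via the trace inequality $\|w_\eta\|^2_{L^2(\partial D)} \leq C\|w_\eta\|_{L^2(D)}\|w_\eta\|_{H^1(D)}$ (or $\epsilon\|w_\eta\|^2_{H^1(D)} + C_\epsilon\|w_\eta\|^2_{L^2(D)}$), so that for $\|\eta\|_{L^\infty(\partial D)}$ small enough the $\epsilon\|w_\eta\|^2_{H^1(D)}$ contribution is swallowed by the coercive term on the left after rearrangement; the remaining $C_\epsilon\|w_\eta\|^2_{L^2(D)} \leq C_\epsilon$ is harmless. The left side is handled by Cauchy--Schwarz and Poincar\'e on $u_\eta \in H^1_0(D)$, giving $\|u_\eta\|_{H^1(D)} \leq C(\|w_\eta\|_{H^1(D)} + 1)$ from testing \eqref{varform:utov} with $\phi=u_\eta$ and using $v_\eta = w_\eta - u_\eta$. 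Combining these, one solves for $\|w_\eta\|_{H^1(D)}$ and then $\|u_\eta\|_{H^1(D)}$, hence $\|v_\eta\|_{H^1(D)} \leq \|w_\eta\|_{H^1(D)} + \|u_\eta\|_{H^1(D)}$, all bounded uniformly in $\eta$.

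For the other sign regime $A_{\text{min}}-1>0$ with $\eta_{\text{max}}<0$, I would instead use the computation behind Lemma~\ref{L0coersive}'s second half, in which $-\big(\mathbb{L}_{k_\eta}u_\eta,u_\eta\big)_{H^1(D)}=0$ at a TEV yields
\begin{align*}
0 = \int_D \nabla\overline{w_\eta}\cdot(A-I)\nabla w_\eta - k_\eta^2(n-1)|w_\eta|^2\,\text{d}x - \int_{\partial D}\eta|w_\eta|^2\,\text{d}s + \int_D |\nabla u_\eta|^2 - k_\eta^2|u_\eta|^2\,\text{d}x.
\end{align*}
Here $A-I>0$, $-\eta>0$, so dropping the (sign-favourable) boundary term and the positive gradient terms after isolating them shows $\|\nabla w_\eta\|^2_{L^2(D)}$ and $\|\nabla u_\eta\|^2_{L^2(D)}$ are bounded by $k_\eta^2(\|w_\eta\|^2_{L^2(D)} + \|u_\eta\|^2_{L^2(D)}) \leq C$, using Poincar\'e for $u_\eta$ and the normalization for $w_\eta$; in this case the boundary term actually helps rather than hurts, so no trace estimate is even needed. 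In both regimes, the conclusion $(w_\eta,v_\eta)$ bounded in $X(D)$ follows.

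The main obstacle is controlling $\int_{\partial D}\eta|w_\eta|^2\,\text{d}s$ in the case where it has the ``wrong'' sign, i.e. showing it can be absorbed for $\eta$ small rather than merely bounded; this is exactly why the hypothesis is $\|\eta\|_{L^\infty(\partial D)}\to 0$ and not just $\eta$ bounded. Once the trace inequality with the $\epsilon$--$C_\epsilon$ splitting is set up, the absorption goes through provided $\|\eta\|_{L^\infty(\partial D)}$ is smaller than a threshold depending only on $A_{\text{max}}$, the trace constant, and the (bounded) range of $k_\eta^2$; since we are taking $\eta\to 0$ this is automatic for $\eta$ in the tail of the sequence. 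A secondary point to be careful about is that the bound on $\|u_\eta\|_{H^1(D)}$ feeds back into the estimate for $w_\eta$ only through lower-order ($L^2$) terms, so there is no circularity — one first gets $\|u_\eta\|_{H^1(D)}\lesssim \|w_\eta\|_{H^1(D)}+1$, substitutes, and closes the estimate.
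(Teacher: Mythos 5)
Your argument for the regime $A_{\text{min}}-1>0$ (with $\eta\le 0$) is sound: at a transmission eigenvalue $\big(\mathbb{L}_{k_\eta}u_\eta,u_\eta\big)_{H^1(D)}=0$, and in the representation used in Lemma \ref{L0coersive} the terms $\int_D\nabla\overline{w_\eta}\cdot(A-I)\nabla w_\eta\,\text{d}x$, $\int_D|\nabla u_\eta|^2\,\text{d}x$ and $-\int_{\partial D}\eta|w_\eta|^2\,\text{d}s$ all carry favourable signs, so the gradients are controlled by $k_\eta^2$ times $L^2$ quantities, which the normalization $\|w_\eta\|^2_{L^2(D)}+\|v_\eta\|^2_{L^2(D)}=1$ bounds. (Note the lemma imposes no sign on $\eta$; if you drop that assumption the boundary term is still absorbable because $\|\eta\|_{L^\infty(\partial D)}\to 0$, exactly as you argue in the other case.)

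The gap is in the regime $A_{\text{max}}-1<0$: the identity \eqref{-L0coersive:2} is the wrong one to use there. It yields $(1-A_{\text{max}})\|\nabla w_\eta\|^2_{L^2(D)}\le \big|\int_D\nabla\overline{w_\eta}\cdot\nabla u_\eta\,\text{d}x\big|+(\text{lower order})+(\text{boundary})$, and the cross term is of the \emph{same} order as the coercive term, not lower order as you assert in your closing paragraph. Your bootstrap $\|\nabla u_\eta\|_{L^2(D)}\le C\big(\|\nabla w_\eta\|_{L^2(D)}+1\big)$, obtained by testing \eqref{varform:utov} with $\phi=u_\eta$ as in \eqref{Lself-adjoint:2}, produces a constant essentially equal to $\|I-A\|_{L^\infty}$; substituting back, closing the estimate requires $\|I-A\|_{L^\infty}<1-A_{\text{max}}$, and already for $A=aI$ with constant $a<1$ this reads $1-a<1-a$, which fails. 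So the estimate is genuinely circular. The fix within your framework is to use instead the other representation of $\big(\mathbb{L}_{k}u,u\big)_{H^1(D)}$ from Lemma \ref{Lkselfadjoint}, containing $\int_D\nabla\overline{u}\cdot A\nabla u\,\text{d}x+\int_D\nabla\overline{v}\cdot(I-A)\nabla v\,\text{d}x+\int_{\partial D}\eta|v|^2\,\text{d}s$: when $A_{\text{max}}<1$ both gradient terms are simultaneously coercive and only the boundary term needs absorbing via the trace inequality and $\|\eta\|_{L^\infty(\partial D)}\to 0$. For comparison, the paper's proof avoids the auxiliary $u$-formulation altogether: it writes the eigenvalue problem variationally on $X(D)$ and tests with the pairs $(w_\eta,-v_\eta+2w_\eta)$, resp. $(w_\eta-2v_\eta,-v_\eta)$, obtaining via Young's inequality a coercivity constant $\alpha>0$ independent of $\eta$ for both components at once, and then absorbs the $\eta$ boundary term exactly as you propose.
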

\begin{proof}
    We assume that $(w_\eta, v_\eta)$ satisfies \eqref{TE:1}--\eqref{TE:2} and let $(\phi_1, \phi_2 ) \in X(D)$. Then, by Green's first theorem, we have that  
\begin{align}
\int_{D} A\nabla w_\eta \cdot \nabla \overline{\phi_1} - k_\eta^2nw_\eta \overline{\phi_1} \,\text{d}x\ &= \int_{\partial D} \overline{\phi_1}(\nu \cdot A\nabla w_\eta) \, \text{d}s, \quad \text{and}  \label{varform:bounded1}\\
\int_{D} \nabla v_\eta \cdot \nabla \overline{\phi_2} - k_\eta^2v_\eta \overline{\phi_2} \,\text{d}x &= \int_{\partial D} \overline{\phi_2}\partial_\nu v_\eta \,\text{d}s \label{varform:bounded2}
\end{align}
Subtracting the equations \eqref{varform:bounded1}--\eqref{varform:bounded2}  and using the boundary conditions \eqref{TE:2} we get that 
$$\int_{D} A\nabla w_\eta \cdot \nabla \overline{\phi_1} - \nabla v_\eta \cdot \nabla \overline{\phi_2} - k_\eta^2nw_\eta \overline{\phi_1} + k_\eta^2v_\eta \overline{\phi_2} \,\text{d}x = \int_{\partial D} \eta v_\eta  \overline{\phi_2} \,\text{d}s.$$

(1) Consider the case when $A_{\text{min}}-1>0$. Write the above variational formulation as 
$$a\left( (w_\eta, v_\eta) ; (\phi_1 , \phi_2) \right) = b\left( (w_\eta, v_\eta) ;(\phi_1 , \phi_2) \right) \quad \text{for any} \quad (\phi_1 , \phi_2)  \in X(D)$$
where 
\begin{equation*}
a\left( (w_\eta, v_\eta) ; (\phi_1 , \phi_2) \right) = \int_{D} A\nabla w_\eta \cdot \nabla \overline{\phi_1} +A_{\text{min}}w_\eta \overline{\phi_1} - \nabla v_\eta \cdot \nabla \overline{\phi_2} - v_\eta \overline{\phi_2} \,\text{d}x\ 
\end{equation*}
and 
\begin{equation*}
b\left( (w_\eta, v_\eta) ;(\phi_1 , \phi_2)  \right) = \int_{D} ( nk_\eta^2+A_{\text{min}} )w_\eta \overline{\phi_1} - ( k_\eta^2+1 )v_\eta \overline{\phi_2} \,\text{d}x\ + \int_{\partial D} \eta v_\eta \overline{\phi_2} \,\text{d}s.
\end{equation*}
We first show the boundedness. To this end, if $\phi_1 =w_\eta$ and $\phi_2 = -v_\eta +2w_\eta $, then $(\phi_1 , \phi_2)  \in X(D)$. Hence
\begin{align*}
\big| a\left( (w_\eta, v_\eta) ; (w_\eta , -v_\eta +2w_\eta) \right) \big| &\geq \int_{D} A\nabla w_\eta \cdot \nabla \overline{w_\eta} +A_{\text{min}}|w_\eta|^2 + |\nabla v_\eta|^2 + |v_\eta|^2 \,\text{d}x\ \\
        &\hspace{1in} - 2\Bigg| \int_{D} \nabla v_\eta \cdot \nabla \overline{w_\eta} +v_\eta \overline{w_\eta} \,\text{d}x\ \Bigg| \\
        &\geq \Big( 1-\frac{1}{\delta} \Big)||v_\eta||^2_{H^1(D)} + \Big( A_{\text{min}} -\delta \Big)||w_\eta||^2_{H^1(D)}, 
\end{align*}
where the last line uses Young's inequality.  So for any $\delta \in (1, A_{\text{min}})$ we have that 
$$\big|a\left( (w_\eta, v_\eta) ; (w_\eta , -v_\eta +2w_\eta) \right)\big| \geq \alpha \left( ||v_\eta||^2_{H^1(D)} +||w_\eta||^2_{H^1(D)} \right),$$
where $\alpha>0$ is independent of $\eta$. For $b(\cdot, \cdot)$, it holds that
\begin{align*}
\big| b\left( (w_\eta, v_\eta) ; ( w_\eta , -v_\eta +2w_\eta) \right) \big| & \leq C_1 \Big[ ||v_\eta||^2_{L^2(D)} + ||w_\eta||^2_{L^2(D)} \Big] + \big|(\eta v_\eta, v_\eta)_{L^2(\partial D)} \big|\\
&\leq C_1 + C_2 \|\eta \|_{L^\infty (\partial D)} \left( ||v_\eta||^2_{H^1(D)} +||w_\eta||^2_{H^1(D)} \right),
\end{align*}
where the constants $C_j>0$ for $j=1,2$ are independent of $\eta$. Note that we have used the normalization of the eigenfunctions and the Trace Theorem. He it holds that  
$$\alpha \left( ||v_\eta||^2_{H^1(D)} +||w_\eta||^2_{H^1(D)} \right) \leq C_1 + C_2 \|\eta \|_{L^\infty (\partial D)} \left( ||v_\eta||^2_{H^1(D)} +||w_\eta||^2_{H^1(D)} \right),$$
which implies that 
$$ ||w_\eta||^2_{H^1(D)} + || v_\eta||^2_{H^1(D)} \leq \frac{C_1}{\alpha - C_2\|\eta \|_{L^\infty (\partial D)} } \quad \text{for}  \quad \eta \ll1.$$
We conclude that $||w_\eta||^2_{H^1(D)} + ||v_\eta||^2_{H^1(D)}$ is bounded as $\eta \to 0$.
    
(2) For  $A_{\text{max}}-1<0$, we rewrite \eqref{varform:bounded1}--\eqref{varform:bounded2} as
 \begin{align*}
\int_{D} A\nabla w_\eta \cdot \nabla \overline{\phi_1} +A_{\text{max}}w_\eta \overline{\phi_1} &- \nabla v_\eta \cdot \nabla \overline{\phi_2} - v_\eta \overline{\phi_2} \,\text{d}x\\
        &\hspace{-0.5in}= \int_{D} ( nk_\eta^2+A_{\text{max}} )w_\eta \overline{\phi_1} - ( k_\eta^2+1 )v_\eta \overline{\phi_2} \,\text{d}x\ + \int_{\partial D} \eta v_\eta \overline{\phi_2} \,\text{d}s.
    \end{align*}
Then the rest of the proof is similar to the previous case by taking $\phi_1 = w_\eta-2v_\eta$ and $\phi_2 = -v_\eta $ such that  $(\phi_1 , \phi_2)  \in X(D)$.
\end{proof}

We now have that $\left\{ k_\eta ; (w_\eta,v_\eta) \right\}  \in \mathbb{R}_{\geq 0} \times X(D)$ is bounded and thus there exist $\left\{ k_0 ; (w_0,v_0) \right\}  \in \mathbb{R}_{\geq 0} \times X(D)$ such that, up to a subsequence,
$$k_\eta \longrightarrow k_0 \quad \text{ as } \quad \eta \to 0$$
and
$$w_\eta \rightharpoonup w_0 \hspace{.2cm} \text{ and } \hspace{.2cm} v_\eta \rightharpoonup v_0 \hspace{.2cm} \text{ in $H^1(D)$ } \quad \text{as $\eta \to 0$. }$$
This implies that $(v_0,w_0)$ is nontrivial by appealing to the weak convergence and the compact embedding of $H^1(D)$ into $L^2(D)$. Moreover,  
$||w_0||^2_{L^2(D)} + ||v_0||^2_{L^2(D)} = 1.$

We now show that $\left\{ k_0 ; (w_0 ,v_0) \right\}$ satisfies \eqref{TE:1}--\eqref{TE:2}. 
Note that, by the weak convergence of the eigenfunctions, for any $\phi \in H^1(D)$,
$$\int_{\partial D} \overline{\phi}\Big[ \nu \cdot A\nabla w_\eta - \partial_\nu v_\eta \Big] \,\text{d}s \longrightarrow \int_{\partial D} \overline{\phi}\Big[ \nu \cdot A\nabla w_0 - \partial_\nu v_0 \Big] \,\text{d}s  \quad \text{ as } \quad \eta \to 0.$$
Hence
\begin{align*}
    \Bigg| \int_{\partial D} \overline{\phi}\Big[ \nu \cdot A\nabla w_\eta - \partial_\nu v_\eta \Big] \,\text{d}s\ \Bigg|  & =  \Bigg| \int_{\partial D} \eta v_\eta \overline{\phi} \,\text{d}s \Bigg| \\
    &\leq  \| \eta \|_{L^{\infty}(\partial D)} ||v_\eta||_{H^{1/2}(\partial D)}||\phi||_{H^{1/2}(\partial D)}\\
    &\leq C\| \eta \|_{L^{\infty}(\partial D)} ||\phi||_{H^{1/2}(\partial D)},
    \end{align*}
where we have used the boundedness of the eigenfunctions. This implies that 
$$\| \nu \cdot A\nabla w_\eta - \partial_\nu v_\eta \|_{H^{-1/2}(\partial D)} \longrightarrow 0  \quad \text{ as } \quad \eta \to 0$$
and we can conclude that $\nu \cdot A\nabla w_0 = \partial_\nu v_0 $ on $\partial D$. Due to the fact that the eigenfunctions $(w_\eta,v_\eta) \in X(D)$ satisfy \eqref{TE:1}--\eqref{TE:2} and the weak convergence as $\eta \to 0$, it holds that 
$$ \nabla \cdot A\nabla w_0 + k_0^2nw_0 = 0 \quad \text{and} \quad \Delta v_0 + k_0^2v_0 = 0 \quad \text{ in $D$} .$$
Consequently, $(w_0 ,v_0) \in X(D)$ is a non-trivial solution to \eqref{TE:1}--\eqref{TE:2} with eigenvalue $k_0$. We thus have shown the main result of this section. 

\begin{theorem}
Assume that either $A_{\text{min}}-1>0$ or $A_{\text{max}}-1<0$ and that $k_\eta$ is bounded then for $k_\eta \to k_0$ as $\eta \to 0$ it holds that $(w_\eta,v_\eta)\rightharpoonup  (w_0,v_0)$ in $X(D)$ as $\eta \to 0$ such that 
\begin{align*}
    \Delta v_0 + k_0^2v_0 = 0 \quad &\text{ and } \quad \nabla \cdot A\nabla w_0+k_0^2nw_0=0 \quad \text{ in } D, \\ 
    w _0= v_0 \quad &\text{ and } \quad \nu \cdot A\nabla w_0 = \partial_\nu v_0  \quad \text{on $\partial D$} 
\end{align*}
with $(w_0,v_0)$ being nontrivial. 
\label{thm-etato0}
\end{theorem}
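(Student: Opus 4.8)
The plan is to run the compactness-and-weak-limit scheme that has been set up in the discussion preceding the statement. I start from a sequence $\eta_j$ with $\|\eta_j\|_{L^\infty(\partial D)} \to 0$, with associated eigenpairs $\{k_{\eta_j}; (w_{\eta_j},v_{\eta_j})\} \in \R_{\geq 0} \times X(D)$ solving \eqref{TE:1}--\eqref{TE:2} and normalized so that $\|w_{\eta_j}\|_{L^2(D)}^2 + \|v_{\eta_j}\|_{L^2(D)}^2 = 1$. By hypothesis $k_{\eta_j}$ is bounded, and by Lemma \ref{bounded:etato0} the eigenfunctions are bounded in $X(D)$ uniformly in $j$. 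Hence, along a subsequence (not relabeled), $k_{\eta_j} \to k_0 \in \R_{\geq 0}$ and $w_{\eta_j} \weakc w_0$, $v_{\eta_j} \weakc v_0$ in $H^1(D)$. Since $X(D)$ is a closed subspace of $H^1(D)\times H^1(D)$, it is weakly closed, so $(w_0,v_0)\in X(D)$; in particular $w_0 = v_0$ on $\partial D$. The compact embedding $H^1(D)\hookrightarrow L^2(D)$ upgrades the weak convergence to strong $L^2$ convergence, so the normalization passes to the limit, giving $\|w_0\|_{L^2(D)}^2 + \|v_0\|_{L^2(D)}^2 = 1$ and hence $(w_0,v_0)\neq (0,0)$.

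Next I would pass to the limit in the interior equations. Each eigenpair satisfies, for every $\phi \in H^1_0(D)$,
$$\int_D A\nabla w_{\eta_j}\cdot\nabla\overline{\phi} - k_{\eta_j}^2 n w_{\eta_j}\overline{\phi} \,\text{d}x = 0 \qquad \text{and} \qquad \int_D \nabla v_{\eta_j}\cdot\nabla\overline{\phi} - k_{\eta_j}^2 v_{\eta_j}\overline{\phi}\,\text{d}x = 0.$$
The gradient terms are linear in the eigenfunction with $\phi$ fixed, so they converge under weak $H^1$ convergence; the zeroth-order terms converge because $w_{\eta_j}\to w_0$, $v_{\eta_j}\to v_0$ strongly in $L^2(D)$ and $k_{\eta_j}^2\to k_0^2$. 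Thus $(w_0,v_0)$ is a weak solution of $\nabla\cdot A\nabla w_0 + k_0^2 n w_0 = 0$ and $\Delta v_0 + k_0^2 v_0 = 0$ in $D$.

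The remaining — and most delicate — point is recovering the conductive boundary relation in the limit. For $\phi\in H^1(D)$, Green's first theorem applied to the two interior equations together with the boundary condition in \eqref{TE:2} yields
$$\int_{\partial D}\overline{\phi}\big(\nu\cdot A\nabla w_{\eta_j} - \partial_\nu v_{\eta_j}\big)\,\text{d}s = \int_D A\nabla w_{\eta_j}\cdot\nabla\overline{\phi} - \nabla v_{\eta_j}\cdot\nabla\overline{\phi} - k_{\eta_j}^2 n w_{\eta_j}\overline{\phi} + k_{\eta_j}^2 v_{\eta_j}\overline{\phi}\,\text{d}x = \int_{\partial D}\eta_j v_{\eta_j}\overline{\phi}\,\text{d}s,$$
where the left-hand side is understood as the $H^{-1/2}$--$H^{1/2}$ duality pairing. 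The middle expression converges, by the same reasoning as above, to the corresponding expression with $(w_0,v_0,k_0)$, i.e. to $\int_{\partial D}\overline{\phi}(\nu\cdot A\nabla w_0 - \partial_\nu v_0)\,\text{d}s$; the right-hand side is bounded by $\|\eta_j\|_{L^\infty(\partial D)}\|v_{\eta_j}\|_{H^{1/2}(\partial D)}\|\phi\|_{H^{1/2}(\partial D)} \leq C\|\eta_j\|_{L^\infty(\partial D)}\|\phi\|_{H^{1/2}(\partial D)} \to 0$, using the trace theorem and the uniform bound on $v_{\eta_j}$ in $H^1(D)$. Since the trace map $H^1(D)\to H^{1/2}(\partial D)$ is onto, letting $j\to\infty$ gives $\nu\cdot A\nabla w_0 = \partial_\nu v_0$ in $H^{-1/2}(\partial D)$. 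Collecting the pieces, $(w_0,v_0)\in X(D)$ is a nontrivial solution of the limiting system, which is the assertion.

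The main obstacle is exactly this last step: weak $H^1$ convergence of $w_{\eta_j}$ gives no direct control on the boundary trace of the conormal derivative $\nu\cdot A\nabla w_{\eta_j}$, so one cannot pass to the limit on $\partial D$ naively. The device that resolves it is to re-express that conormal trace through the interior variational identity (legitimate precisely because $w_{\eta_j}$ solves the PDE), reducing everything to weak convergence of gradients tested against a fixed $\phi$ plus strong $L^2$ convergence of lower-order terms. I would also remark, using the Faber--Krahn-type lower bounds of Lemma \ref{Lk:positive}, that $k_0$ is bounded away from $0$, so the limiting value is a genuine transmission eigenvalue and not the trivial $k=0$.
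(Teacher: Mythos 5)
Your proposal is correct and follows essentially the same route as the paper: uniform bounds from Lemma \ref{bounded:etato0}, weak compactness and extraction of a convergent subsequence, nontriviality via the compact embedding $H^1(D)\hookrightarrow L^2(D)$ together with the $L^2$ normalization, and recovery of $\nu\cdot A\nabla w_0=\partial_\nu v_0$ by estimating $\int_{\partial D}\eta\, v_\eta\overline{\phi}\,\text{d}s$ in the $H^{-1/2}$--$H^{1/2}$ duality. Your explicit justification of why the conormal traces converge (re-expressing them through the interior variational identity rather than appealing to weak convergence of traces directly) is a sound clarification of a step the paper states more tersely, and does not change the argument.
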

%\begin{proof}
%    The proof is a consequence of the analysis presented in this subsection.
%\end{proof}

%Transmission eigenvalue problem as \eta goes to infinity
\subsection{Convergence of the TEVs as $\eta \longrightarrow \pm \infty$} \label{sec-etatoinfty}
In this section, we consider the case when $\eta \to \pm \infty$. One will see that there is an interesting bifurcation for this limit. We shall make additional assumptions that the boundary $\partial D$ is of class $\mathscr{C}^2$ or is polygonal with no reentrant corners and $A \in \mathscr{C}^1(\overline{D},\R^{d \times d})$ so that we can use the standard elliptic regularity \cite{salsa}. Let $k_\eta$ denote the TEV with respect to $\eta$. We assume that $k_\eta$ is bounded as $\eta \to \pm \infty$ and therefore has a limit point. Again, by Lemma \ref{Lk:positive} as well as Theorems \ref{thm-mono1} and \ref{thm-mono2}, in some cases, $k_\eta$ has a limit point as $\eta \to \pm \infty$ monotonically.

Assume that the eigenfunctions $(w_\eta , v_\eta )$ satisfying \eqref{TE:1}--\eqref{TE:2} are normalized in $X(D)$ such that
$$||v_\eta||^2_{H^1(D)} + ||w_\eta||^2_{H^1(D)} = 1$$
for any $|\eta|$ finite.  
We have that $\left\{ k_\eta ; (w_\eta,v_\eta) \right\}  \in \mathbb{R}_{\geq 0} \times X(D)$ is bounded and thus there exists $\left\{ k_\infty ; (w_\infty , v_\infty) \right\}  \in \mathbb{R}_{\geq 0} \times X(D)$ such that, up to a subsequence,
$$k_\eta \longrightarrow k_\infty \quad \text{ as } \quad \eta \to \pm \infty$$
and
$$w_\eta \rightharpoonup w_\infty \hspace{.2cm} \text{ and } \hspace{.2cm} v_\eta \rightharpoonup v_\infty \hspace{.2cm} \text{ in $H^1(D)$ } \quad \text{as $\eta \to \pm \infty$. }$$
As in section \ref{sec-etato0}, the weak limit $(w_\infty , v_\infty) \in X(D)$ satisfies
\begin{equation*}
    \Delta v_\infty + k_\infty^2v_\infty = 0 \quad \text{ and }\quad \nabla \cdot A\nabla w_\infty+k_\infty^2nw_\infty =0 \quad \text{in $D$} 
\end{equation*}
by the weak convergence and Green's first theorem. 

To determine the limiting boundary value problem, we first show that the weak limits have zero trace on $\partial D$. By the conductivity condition in \eqref{TE:2}, we have that 
\begin{equation*}
\int_{\partial D} \overline{\phi}\Big[ \nu \cdot A\nabla w_\eta - \partial_\nu v_\eta \Big] \, \text{d}s\ = \int_{\partial D} \eta v_\eta \overline{\phi} \,\text{d}s\quad \text{ for all } \quad \phi \in H^1(D).
\end{equation*}
Letting $\phi = {v_\eta}$, the following estimate holds 
\begin{equation*}
\inf\limits_{\partial D} | \eta |  \int_{\partial D} |v_\eta|^2 \,\text{d}s \leq \left| \int_{\partial D} \overline{v_\eta}\left[ \nu \cdot A\nabla w_\eta - \partial_\nu v_\eta \right] \,\text{d}s\ \right|,
\end{equation*}
which implies that 
\begin{equation*}
\| v_\eta \|^2_{L^2(\partial D)} \leq \frac{1}{\inf\limits_{\partial D} | \eta |} ||\nu \cdot A\nabla w_\eta - \partial_\nu v_\eta||_{H^{-1/2}(\partial D)} ||v_\eta ||_{H^{1/2}(\partial D)}.
\end{equation*}
By the trace theorems and the normalization of the transmission eigenfunctions, we have that there exist $C_j>0$, $j=1,2$, independent of $\eta$ such that
\begin{equation*}
     ||\nu \cdot A\nabla w_\eta - \partial_\nu v_\eta||_{H^{-1/2}(\partial D)} \leq C_1 \quad \text{ and }\quad  ||v_\eta ||_{H^{1/2}(\partial D)} \leq C_2.
\end{equation*}
Therefore, we obtain that 
\begin{equation*}
    ||v_\eta ||^2_{L^2(\partial D)} \leq \frac{C}{ \inf\limits_{\partial D} | \eta |} \longrightarrow 0  \quad \text{ as }\quad \eta \longrightarrow \pm \infty. 
\end{equation*}
By the compact embedding of $H^{1/2}(\partial D)$ into $L^2(\partial D)$ we have that $v_\infty = 0$ and $w_\infty = 0$ on $\partial D$ since $(w_\infty , v_\infty) \in X(D)$.

Unlike the previous case, due to the normalization in $X(D)$, we can not conclude that the eigenfunctions are nontrivial yet. To prove this, we need to make an extra assumption on the boundary values for the eigenfunctions. In our investigation, we are unable to avoid this technicality just as in \cite{ayala2022analysis}. With this in mind, we now prove a lemma to show that the corresponding limits $(w_\infty , v_\infty)$ are nontrivial.

\begin{lemma}\label{etatoinftyH2:bounded}
If $\partial D$ is of class $\mathscr{C}^2$ or is polygonal with no reentrant corners, $A \in \mathscr{C}^1(\overline{D},\R^{d \times d})$, $v_\eta$ on $\partial D$ is bounded in  $H^{3/2}(\partial D)$ and that $k_\eta$ is bounded,  then $v_\eta$ and $w_\eta$ are bounded in $H^2(D)$.
\end{lemma}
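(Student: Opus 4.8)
The plan is to derive uniform $H^2(D)$ bounds for $v_\eta$ and $w_\eta$ from the $H^2$ elliptic shift estimate for the Dirichlet problem, using that both functions already satisfy a uniform $H^1(D)$ bound because of the normalization $\|v_\eta\|_{H^1(D)}^2 + \|w_\eta\|_{H^1(D)}^2 = 1$.

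First I would treat $v_\eta$. It solves the inhomogeneous Dirichlet problem $\Delta v_\eta = -k_\eta^2 v_\eta$ in $D$ with $v_\eta = v_\eta|_{\partial D}$ on $\partial D$. Under the standing hypothesis that $\partial D$ is of class $\mathscr{C}^2$ or polygonal with no reentrant corners, the Laplacian enjoys the $H^2$ shift property: lifting the boundary data by a bounded right inverse of the trace map $H^2(D) \to H^{3/2}(\partial D)$ and subtracting reduces the problem to a homogeneous Dirichlet problem with right-hand side in $L^2(D)$, and standard elliptic regularity \cite{salsa} then gives
$$\|v_\eta\|_{H^2(D)} \le C\big( \|k_\eta^2 v_\eta\|_{L^2(D)} + \|v_\eta\|_{H^{3/2}(\partial D)} \big),$$
with $C = C(D)$. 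Since $k_\eta$ is bounded, $\|v_\eta\|_{L^2(D)} \le \|v_\eta\|_{H^1(D)} \le 1$, and $\|v_\eta\|_{H^{3/2}(\partial D)}$ is bounded by assumption, the right-hand side is bounded uniformly in $\eta$, so $v_\eta$ is bounded in $H^2(D)$.

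Next I would treat $w_\eta$. Because $A \in \mathscr{C}^1(\overline D, \R^{d\times d})$, I can write the divergence-form operator in nondivergence form,
$$\nabla\cdot A\nabla w_\eta = \sum_{i,j} A_{ij}\,\partial_i\partial_j w_\eta + (\nabla\cdot A)\cdot\nabla w_\eta ,$$
so that $w_\eta$ solves $\sum_{i,j} A_{ij}\partial_i\partial_j w_\eta = -k_\eta^2 n w_\eta - (\nabla\cdot A)\cdot\nabla w_\eta =: g_\eta$ in $D$ with $w_\eta = v_\eta$ on $\partial D$, using the first condition in \eqref{TE:2}. The leading coefficients $A_{ij}$ are continuous on $\overline D$ and uniformly elliptic and the first-order coefficients $\nabla\cdot A$ lie in $L^\infty(D)$, so the same $H^2$ shift estimate applies on this class of domains and yields
$$\|w_\eta\|_{H^2(D)} \le C\big( \|g_\eta\|_{L^2(D)} + \|v_\eta\|_{H^{3/2}(\partial D)} \big).$$
Here $\|g_\eta\|_{L^2(D)} \le k_\eta^2 n_{\text{max}} \|w_\eta\|_{L^2(D)} + \|\nabla\cdot A\|_{L^\infty(D)} \|\nabla w_\eta\|_{L^2(D)}$ is bounded uniformly in $\eta$ by the $H^1$ normalization and the boundedness of $k_\eta$, and $\|v_\eta\|_{H^{3/2}(\partial D)}$ is bounded by hypothesis, so $w_\eta$ is bounded in $H^2(D)$.

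The main obstacle is justifying the $H^2$ shift estimate in the polygonal case: for a polygon with no reentrant corners every interior angle is strictly below $\pi$, which is exactly the regime in which the Dirichlet problem for a second-order uniformly elliptic operator with Lipschitz coefficients maps $L^2$ data into $H^2$ with no corner-singularity contribution, so there is no obstruction. The $\mathscr{C}^1$ assumption on $A$ is used precisely to ensure the lower-order term $(\nabla\cdot A)\cdot\nabla w_\eta$ belongs to $L^2(D)$; apart from this, the argument is a routine combination of trace lifting, the shift estimate, and the uniform $H^1$ bound.
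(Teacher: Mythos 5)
Your proposal is correct and follows essentially the same route as the paper: both apply the $H^2$ elliptic shift estimate to $\Delta v_\eta = -k_\eta^2 v_\eta$ and $\nabla\cdot A\nabla w_\eta = -k_\eta^2 n w_\eta$ with Dirichlet data $v_\eta = w_\eta$ on $\partial D$ bounded in $H^{3/2}(\partial D)$, and close the argument using the uniform $H^1$ normalization and the boundedness of $k_\eta$. Your added details (trace lifting, rewriting the divergence-form operator in nondivergence form to invoke the shift estimate, and the remark on convex corners) are elaborations of what the paper compresses into a single citation of standard elliptic regularity.
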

\begin{proof}
Recall that the eigenfunctions $v_\eta$ and $w_\eta$ satisfy
\begin{equation*}
 \Delta v_\eta + k_\eta^2v_\eta = 0 \quad \text{ or }\quad \nabla \cdot A\nabla w_\eta+k_\eta^2nw_\eta =0 \quad \text{in $D$}.
\end{equation*}
Since $v_\eta , w_\eta \in H^1(D) \subset L^2(D)$ and $n \in L^\infty(D)$, we have
\begin{equation*}
\Delta v_\eta = -k_\eta^2v_\eta \in L^2(D) \quad \text{ and }\quad    \nabla \cdot A\nabla w_\eta = -k_\eta^2nw_\eta \in L^2(D) .
\end{equation*}
Due to the regularity of the boundary and coefficient matrix, standard elliptic regularity \cite{salsa} gives that $v_\eta,w_\eta \in H^2(D)$ such that
\begin{equation*}
    ||v_\eta||_{H^2(D)} \leq C_1 \Big[  ||v_\eta||_{L^2(D)} + k_\eta^2||v_\eta||_{L^2(D)} + ||v_\eta||_{H^{3/2}(\partial D)} \Big],
\end{equation*}
\begin{equation*}
    ||w_\eta||_{H^2(D)} \leq C_2 \Big[  ||w_\eta||_{L^2(D)} + k_\eta^2n_{\text{max}}||w_\eta||_{L^2(D)} + ||v_\eta||_{H^{3/2}(\partial D)} \Big],
\end{equation*}
where we have used the fact that $w_\eta=v_\eta$ on $\partial D$. Note that the constants $C_j>0$ for $j=1,2$ are independent of $\eta$. Since $v_\eta$ and $w_\eta$ are bounded in $H^1(D)$ and  $k_\eta$ is bounded, $||v_\eta||_{H^2(D)}$ and  $||w_\eta||_{H^2(D)}$ are bounded.
\end{proof}

The compact embedding of $H^2(D)$  into $H^1(D)$ implies that 
$$||v_\infty||^2_{H^1(D)} + ||w_\infty||^2_{H^1(D)} = 1.$$
Hence the limiting functions are nontrivial. Also, note that Lemma \ref{etatoinftyH2:bounded} still holds provided that $\partial_\nu v_\eta$ on $\partial D$ is bounded in $H^{1/2}(\partial D)$. Put the results together from this subsection, we have proved the following theorem. 

\begin{theorem}\label{tev:etatoinfty}
If $\partial D$ is class $\mathscr{C}^2$ or is polygonal with no reentrant corners as well as $A \in \mathscr{C}^1(\overline{D},\R^{d \times d})$ with $v_\eta$ on $\partial D$ is bounded in  $H^{3/2}(\partial D)$ and that $k_\eta$ is bounded then for $k_\eta \to k_\infty$ as $\eta \to \pm \infty$ we have that $(w_\eta, v_\eta) \to (w_\infty, v_\infty)$ as $\eta \to \pm \infty$ in $X(D)$ such that 
 $$ \Delta v_\infty + k_\infty^2v_\infty = 0 \quad \text{ and }\quad \nabla \cdot A\nabla w_\infty+k_\infty^2nw_\infty =0 \quad \text{in $D$} $$
where $(w_\infty, v_\infty) \in H^1_0(D) \times H^1_0(D)$ nontrivial.
\end{theorem}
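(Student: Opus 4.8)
The plan is to argue by compactness, following the scheme set up in this subsection. First I would use the hypothesis that $\{k_\eta\}$ is bounded in $\R_{\geq 0}$ together with the normalization $\|v_\eta\|_{H^1(D)}^2 + \|w_\eta\|_{H^1(D)}^2 = 1$ to extract a subsequence (not relabeled) along which $k_\eta \to k_\infty \in \R_{\geq 0}$ and $w_\eta \rightharpoonup w_\infty$, $v_\eta \rightharpoonup v_\infty$ weakly in $H^1(D)$; since $X(D)$ is a closed subspace of $H^1(D)\times H^1(D)$, the limit $(w_\infty, v_\infty)$ lies in $X(D)$. Testing the equations \eqref{TE:1} against a fixed $\phi \in H^1_0(D)$ and passing to the weak limit — the lower-order products $k_\eta^2 n w_\eta$ and $k_\eta^2 v_\eta$ converge in $L^2(D)$ by the compact embedding $H^1(D)\hookrightarrow L^2(D)$ and the convergence $k_\eta \to k_\infty$ — yields $\nabla\cdot A\nabla w_\infty + k_\infty^2 n w_\infty = 0$ and $\Delta v_\infty + k_\infty^2 v_\infty = 0$ in $D$.

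Next I would pin down the boundary behaviour of the limit. Using the conductivity condition in \eqref{TE:2} with test function $\phi = v_\eta$, together with the trace theorem and the uniform bounds on $\|v_\eta\|_{H^{1/2}(\partial D)}$ and on $\|\nu\cdot A\nabla w_\eta - \partial_\nu v_\eta\|_{H^{-1/2}(\partial D)}$ (both controlled by the $H^1$-normalization and the equations), I obtain
\[
\inf_{\partial D}|\eta|\,\|v_\eta\|_{L^2(\partial D)}^2 \leq \Big|\int_{\partial D}\overline{v_\eta}\big[\nu\cdot A\nabla w_\eta - \partial_\nu v_\eta\big]\,\text{d}s\Big| \leq C,
\]
so $\|v_\eta\|_{L^2(\partial D)}^2 \to 0$ as $\eta \to \pm\infty$. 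Since $v_\eta \rightharpoonup v_\infty$ in $H^1(D)$ forces $v_\eta|_{\partial D} \rightharpoonup v_\infty|_{\partial D}$ in $H^{1/2}(\partial D)$ and hence strongly in $L^2(\partial D)$ by compact embedding, I conclude $v_\infty = 0$ on $\partial D$, and therefore $w_\infty = 0$ on $\partial D$ because $(w_\infty, v_\infty) \in X(D)$; that is, $(w_\infty, v_\infty) \in H^1_0(D)\times H^1_0(D)$.

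The main obstacle is showing the limit is nontrivial. Because the normalization is imposed in the $H^1$-norm and the convergence obtained so far is only weak in $H^1$, one cannot directly transfer $\|v_\eta\|_{H^1(D)}^2 + \|w_\eta\|_{H^1(D)}^2 = 1$ to the limit. This is precisely where the extra hypotheses enter: under the $\mathscr{C}^2$ (or polygonal with no reentrant corners) regularity of $\partial D$, the $\mathscr{C}^1$ regularity of $A$, and the assumed uniform $H^{3/2}(\partial D)$-bound on $v_\eta|_{\partial D}$, Lemma \ref{etatoinftyH2:bounded} provides a uniform bound $\|v_\eta\|_{H^2(D)} + \|w_\eta\|_{H^2(D)} \leq C$. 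The compact embedding $H^2(D)\hookrightarrow H^1(D)$ then upgrades the weak $H^1$-convergence of the subsequence to strong $H^1$-convergence, so $\|v_\infty\|_{H^1(D)}^2 + \|w_\infty\|_{H^1(D)}^2 = 1$ and $(w_\infty, v_\infty)$ is nontrivial. Combining this with the PDEs and the zero boundary trace established above finishes the proof, and the convergence $(w_\eta, v_\eta) \to (w_\infty, v_\infty)$ holds in the norm of $X(D)$ exactly because it is strong in $H^1(D)\times H^1(D)$.
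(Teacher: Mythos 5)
Your proposal is correct and follows essentially the same route as the paper: weak compactness from the $X(D)$-normalization and boundedness of $k_\eta$, passage to the limit in the PDEs, the $\phi = v_\eta$ boundary estimate forcing $v_\infty = w_\infty = 0$ on $\partial D$, and Lemma \ref{etatoinftyH2:bounded} plus the compact embedding $H^2(D)\hookrightarrow H^1(D)$ to upgrade to strong convergence and preserve the normalization, hence nontriviality. No gaps to report.
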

%\begin{proof}
%    The proof is a consequence of the analysis presented in this subsection.
%\end{proof}

The above theorem states that the TEVs limit points as $\eta \to \pm \infty$ are either the Dirichlet eigenvalues for $-\Delta$ or $-n^{-1}\nabla \cdot A\nabla$. The analysis in this section works for any TEV with a limit point as $\eta \to \pm \infty$. Therefore, we conclude that the complex TEVs (provided they exist) limit points  as $\eta \to \pm \infty$ all lie on the real line. To summarize, we have shown the discreteness, existence, and dependence on the parameters for TEVs.

%%%%%%%%%%%%%%%%%%%%%%%%%%%%%%%%%%%%%%%%%%%%%%%%%%%%%%%
% Numerical Results
\section{Numerical Validation} \label{sec-numerical}
In this section, we provide numerical examples that demonstrate the theoretical results in the previous sections. We shall focus on the dependence of the TEVs with respect to the coefficients $A,n,$ and $\eta$. From Section \ref{TEVmono}, the first TEV is monotone with respect to the coefficients. Also, in sections \ref{sec-etato0} and \ref{sec-etatoinfty} we established the limiting behavior of the TEVs as $\eta \longrightarrow 0$ and $\eta \longrightarrow \pm \infty$. The following numerical examples demonstrate the theory. 

\begin{figure}[!h]
\centering 
\includegraphics[scale=0.13]{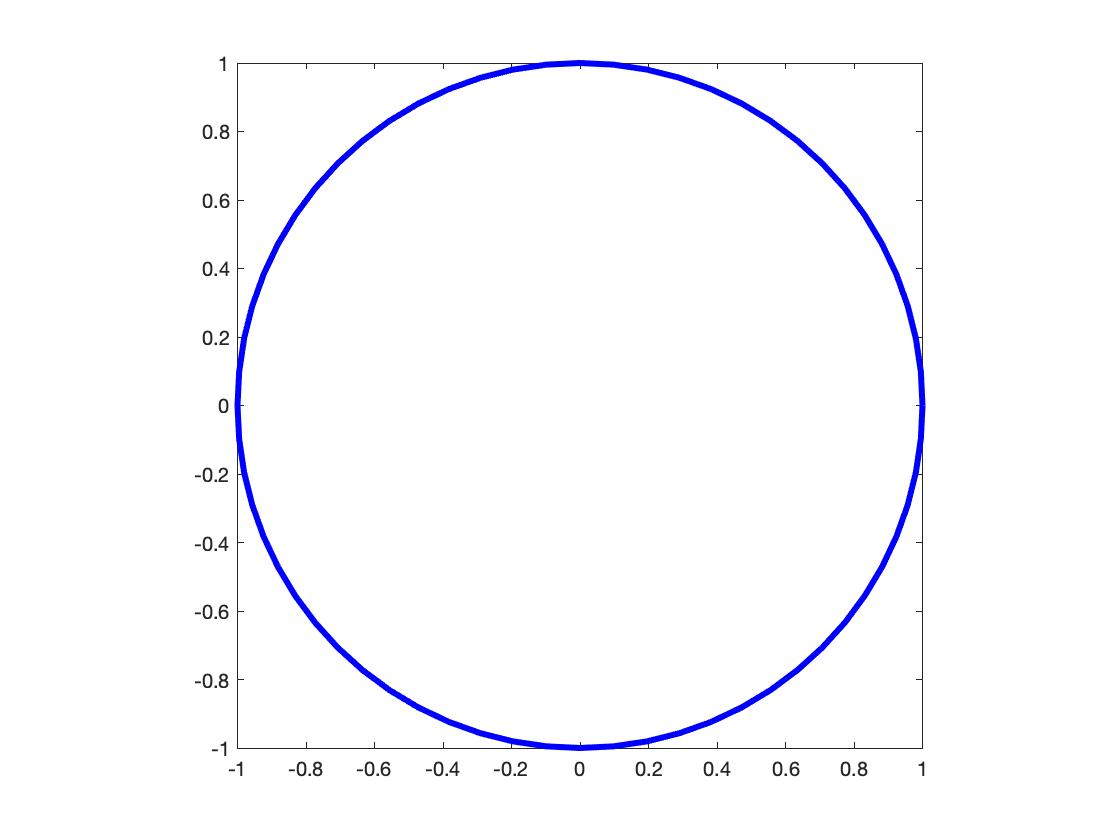} \hspace{-0.2in} \includegraphics[scale=0.13]{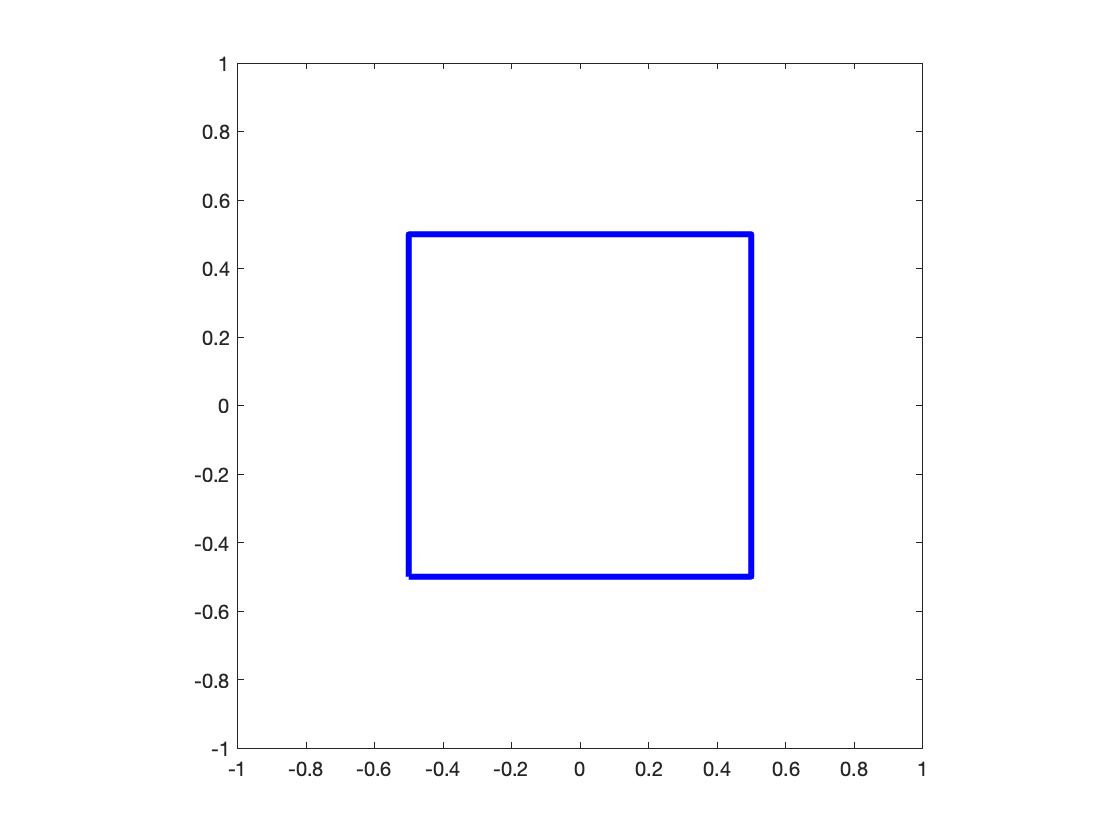}  \hspace{-0.2in} \includegraphics[scale=0.13]{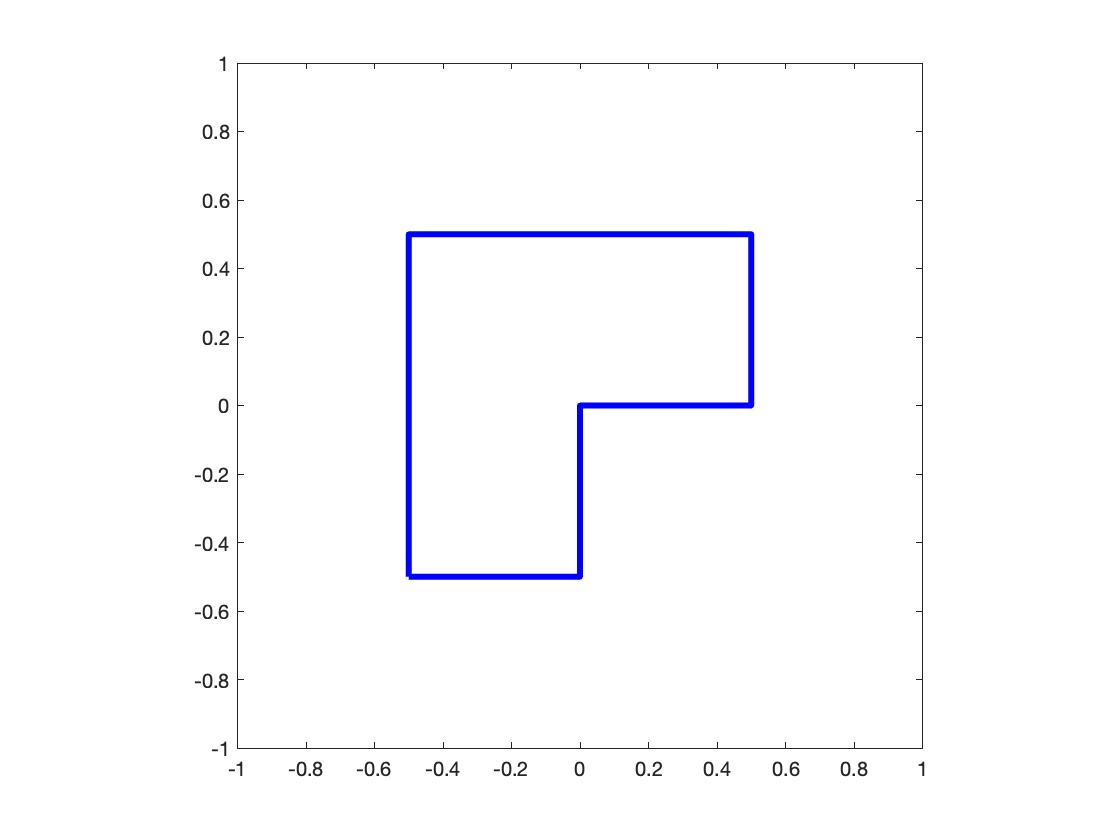}
\caption{ The unit circle, unit square and L--Shaped domains}
\label{TEVdomains}
\end{figure}

We consider three domains in $\R^2$: the unit disk, the unit square, and an L--shaped domain (see Figure \ref{TEVdomains}). For simplicity, we shall assume that $A= a I$ where $a$ is constant and $I$ is the $2 \times 2$ identity matrix.

%%%%%%%%%%%%%%%%%%%%%%%%%%%%%%%%%%%%%%%%%%%%%%%%%%%%%%%
\subsection{Unit Disk} \label{sec-numericalunitdisk}
For the unit disk, by separation of variables, $k$ is a TEV if and only if $d_m(k) = 0$ for some $m \in \N$, where
\begin{equation}
% d_m(k) :=k\sqrt{{\color{black}na}}J'_m\big(\sqrt{{\color{black}{n}/{a}}}k \big)J_m(k) - J_m\big(\sqrt{{\color{black}{n}/{a}}}k \big)\big( kJ'_m(k) +{\color{black}\eta} J_m(k) \big).
    d_m(k) := \text{det} 
    \begin{pmatrix}
        J_m\Big(\sqrt{\frac{n}{a}}k \Big) & -J_m(k) \\
        k\sqrt{na}J'_m\Big(\sqrt{\frac{n}{a}}k \Big) & -\Big[ kJ'_m(k) +\eta J_m(k) \Big]
    \end{pmatrix}. 
    \label{TEVdetfunc}
\end{equation}
where $n$ and $\eta$ are constant and $J_m$ denotes the Bessel function of the first kind of order $m$. 

We consider the case when $\eta \to 0$. By theorem \ref{TEVexist} there exist infinitely many real eigenvalues. Theorem \ref{thm-etato0} gives that the limit point as $\eta \to 0$ is the TEV with $\eta=0$, i.e. the TEVs are continuous at $\eta=0$. To verify the convergence we compute the first root of $d_0(k)$ given by \eqref{TEVdetfunc}. We remark that the first root of $d_0(k)$ in general is be the first TEV. The estimated rate of convergence (EOC) is defined as
$$\text{EOC} = \frac{1}{\log(2)} \log \Bigg( \frac{|k_{\eta_p}-k_0|}{|k_{\eta _{p+1}}-k_0|} \Bigg) \quad \text{with  $\eta_p = \pm \frac{1}{2^p}$ for $p=0,1,2,3,...$}$$
The numerical results are given in Table \ref{EOC:1}. 
\begin{table}[!ht]
\begin{center}
\begin{tabular}{c|c|c||c|c|c }
        $\eta$ & $k_{\eta}$ & EOC & $\eta$ & $k_\eta$ & EOC\\
        \hline
        1 & 1.6010 & N/A & -1 & 5.8032 & N/A \\
        1/2 & 1.7353 & 1.7003 &  -1/2 & 5.8143 & 0.9223 \\
        1/4 & 1.7697 & 1.2386 &-1/4 & 5.8203 & 0.9542 \\
        1/8 & 1.7832 & 1.1004 & -1/8 & 5.8234 & 0.9556 \\
        1/16 & 1.7893 & 1.0498 &-1/16 & 5.8250 & 0.9569 \\
        1/32 & 1.7922 & 1.0255 & -1/32 & 5.8258 & 0.9175 \\
        1/64 & 1.7936 & 1 & -1/64 & 5.8262 & 0.8480 \\
        1/128 & 1.7943 & 1 & -1/128 & 5.8264 & 0.7370 \\
        1/256 & 1.7946 & 1 &-1/256 & 5.8265 & 0.5850 \\
        1/512 & 1.7948 & 1 & -1/512 & 5.8266 & 1 \\
        \hline
    \end{tabular}
\end{center}
\caption{Convergence with respect to $\eta \to 0$ for the unit disk ($A=0.4 I$ and $n=3$ for $\eta>0$; $A=3 I$ and $n=0.7$ for $\eta<0$).}\label{EOC:1}
\end{table}

Note that the limit for $A=0.4 I$ and $n=3$ is $k_0=1.7950$ and the limit for $A=3 I$ and $n=0.7$ is $k_0=5.8267$. The values $k_0$'s are given by the first root of $d_0(k; \eta = 0)$. The numerical results in Table \ref{EOC:1} verify the convergence in the previous sections and indicate that the EOC is of first order, i.e., $|k_\eta - k_0| = \mathcal{O}(|\eta|)$ as $\eta \to 0$.

For $\eta \to \pm \infty$, from Theorem \ref{tev:etatoinfty} the limiting value $k_\infty$ is such that either 
$$ \Delta v_\infty + k_\infty^2v_\infty = 0 \quad \text{ or }\quad \nabla \cdot A\nabla w_\infty+k_\infty^2nw_\infty =0 \quad \text{in $D$},$$
where $v_\infty=w_\infty =0$ on $\partial D$. The limiting values are given by 
$$  k_\infty = \sqrt{\lambda_j(D)} \quad \text{ or }\quad k_\infty = \sqrt{\lambda_j(D) \frac{a}{n}} \quad \text{ for some $j \in \N$},$$
where $\lambda_j(D)$ is a Dirichlet eigenvalues of the negative Laplacian in $D$. For the unit disk, $ \sqrt{\lambda_1(D)} =2.4048$. For $a=0.4 $ and $n=3$,
$k_\infty =2.4048$ or $k_\infty = 0.8781$ are possible limit points. 
The EOC is defined by
$$\text{EOC} = \frac{1}{\log(2)} \log \Bigg( \frac{|k_{\eta_p}-k_\infty|}{|k_{\eta _{p+1}}-k_\infty|} \Bigg) \quad \text{ with  $\eta_p = \pm {2^p}$ for $p=0,1,2,3,...$}.$$

\begin{table}[!ht]
\begin{center}
\begin{tabular}{c|c|c||c|c|c }
        $\eta$ & $k_{\eta}$ & EOC & $\eta$ & $k_\eta$ & EOC\\
        \hline
        1 & 1.6010 & N/A &  -1 & 5.8032 & N/A \\
        2 & 1.1415 & 1.4565 &  -2 & 5.7839 & 0.00822 \\
        4 & 0.9828 & 1.3310 &-4 & 5.7537 & 0.01295 \\
        8 & 0.9258 & 1.1342 &  -8 & 5.7124 & 0.0179 \\
        16 & 0.9010 & 1.0586 &-16 & 2.5944 & 4.1248 \\
        32 & 0.8893 & 1.0318  & -32 & 2.4881 & 1.1866 \\
        64 & 0.8837 & 1 &  -64 & 2.4443 & 1.0765 \\
        128 & 0.8809 & 1 & -128 & 2.4241 & 1.0333 \\
        256 & 0.8795 & 1 &-256 & 2.4143 & 1.0266 \\
        512 & 0.8788 & 1 & -512 & 2.4095 & 1.0153 \\
        \hline
    \end{tabular}
\end{center}
\caption{Convergence with respect to $\eta \to \pm \infty$ for the unit disk. ($A=0.4 I$ and $n=3$ for $\eta>0$ and $A=3 I$ and $n=0.7$ for $\eta<0$).}\label{EOC:2}
\end{table}

The results are shown in Table \ref{EOC:2}, which verifies the convergence in the previous sections. Similarly, it seems that EOC is first order i.e. $|k_\eta - k_\infty| = \mathcal{O}\left(1/|\eta|\right)$ as $\eta \to \pm \infty$. Note that by Table \ref{EOC:2} we see that in the limit $k_\eta$ can tend to either a Dirichlet eigenvalues of the negative Laplacian or a Dirichlet eigenvalues for $-n^{-1} \nabla \cdot A \nabla$ in $D$.

%%%%%%%%%%%%%%%%%%%%%%%%%%%%%%%%%%%%%%%%%%%%%%%%%%%%%%%
\subsection{Validation for the Unit Square and L--Shaped Domain} \label{sec-numericalother}
In this section, we consider two Lipschitz domains, for which a continuous finite element method is used  \cite{ColtonMonkSun2010IP,JiSun2013, Sun2011SIAMNA}. 
The two domains are respectively given by
$$(-1/2, 1/2) \times (-1/2, 1/2) \quad \text{and} \quad (-1/2, 1/2) \times (-1/2, 1/2) \setminus [0,1/2] \times [0,1/2].$$
Again, for simplicity, $A= a I$ where $a$ is constant and $n$ and $\eta$ are constant.

\begin{table}[!ht]
\begin{center}
\begin{tabular}{c|c||c|c }
        $a$ & $k_{1}(a) $ & $a$ & $k_{1}(a) $ \\
        \hline
        $1.5$&$18.3370$ &$0.3$&$3.3113$ \\
        $2$& $14.1549$ & $0.4$& $4.2377$ \\
        $2.5$ &$11.8953$ & $0.5$ &$5.4361$ \\
        $3$ & $10.7253$ & $0.6$ & $6.5176$ \\
        $3.5$& $10.2221$ & $0.7$& $7.6965$ \\
        $4$&$9.9292$ & $0.8$&$9.2953$ \\
        \hline
    \end{tabular}
\end{center}
\caption{Monotonicity with respect to $A=aI$ for the unit square. Here, we take $n=0.75$ and $\eta = -2$ for $a>1$ where as $n=2$ and $\eta=2$ for $a<1$.}\label{mono:1}
\end{table}
In Table \ref{mono:1} we see the monotonicity for the unit square with respect to $A=aI$. 
The monotonicity with respect to $n$ for the L-shaped domain can be found in Table \ref{mono:2}. For both cases Theorems \ref{thm-mono1} and \ref{thm-mono2} are validated. 
\begin{table}[!ht]
\begin{center}
\begin{tabular}{c|r||c|r }
        $n$ & $k_{1}(n)$ & $n$ & $k_1(n)$ \\
        \hline
        $1.5$&$12.9868$ &$0.3$&$10.5159$ \\
        $2$& $9.0795$ & $0.4$& $11.0014$ \\
        $2.5$ &$7.2272$ & $0.5$ & $11.5661$ \\
        $3$ & $6.0632$ & $0.6$ & $12.2386$ \\
        $3.5$& $5.3701$ & $0.7$& $12.9390$ \\
        $4$&$4.8812$ & $0.8$&$13.5370$ \\
        \hline
    \end{tabular}
\end{center}
\caption{Monotonicity with respect to $n$ for the L--Shaped domain. Here, we take $A=0.7 I$ and $\eta=2$ for $n>1$  where as $A=3 I$ and $\eta = -2$ for $n<1$.} \label{mono:2}
\end{table}

Lastly, consider the case when $\eta \to \pm \infty$ for the L-shaped domain. Note that this domain is not covered by Theorem \ref{tev:etatoinfty} due to the reentrant corner. The calculations here are to see if the smoothness of the boundary in Theorem \ref{tev:etatoinfty} is an essential assumption or technical one that can possibly be removed. The results are shown in Table \ref{EOC:3}.
\begin{table}[!ht]
\begin{center}
\begin{tabular}{c|r||c|r }
        $\eta$ & $k_{\eta}$& $\eta$ & $k_\eta$\\
        \hline
        $1$& $3.9995$  &  $-1$& $13.0503$  \\
        $2$ &$3.8281$ & $-2$ &$12.9390$ \\
        $4$ & $3.4828$ & $-4$ & $12.6623$  \\
        $8$& $2.7805$ &  $-8$& $11.7693$   \\
        $16$&$2.4898$  &$-16$&$10.1467$  \\
        $32$&$2.3732$  & $-32$&$7.2242$  \\
        $64$&$2.3196$ &  $-64$&$6.6044$  \\
        $128$&$2.2936$  & $-128$&$6.3938$  \\
        $256$&$2.2807$  &$-256$&$6.3000$  \\
        $512$&$2.2742$   & $-512$&$6.2549$  \\
        \hline
    \end{tabular}
\end{center}
\caption{Convergence with respect to $\eta \to \pm \infty$ for the L-shaped domain ($A=0.4 I$ and $n=3$ for $\eta>0$ and $A=3 I$ and $n=0.7$ for $\eta<0$).}\label{EOC:3}
\end{table}

From Table \ref{EOC:3}, the first TEV does indeed seem to have a limit as $\eta  \to \pm \infty$. The limiting value is given by  
$k_\infty = \sqrt{\lambda_1(D)}$ or $k_\infty = \sqrt{\lambda_1(D) \frac{a}{n}}$ where $\lambda_1(D) \approx 38.5674$.
This is consistent with $k_\infty \approx 6.2103$ or $2.2677$ (i.e. for the case $a= 0.4$ and $n=3$). Therefore, we conjecture that Theorem \ref{tev:etatoinfty} is valid for polygonal domains with reentrant corners. Also, with this we see the `bifurcation' numerically for the limiting case as $\eta \to \pm \infty$ since we have seen for multiple examples that both possible limiting values are valid. 

%%%%%%%%%%%%%%%%%%%%%%%%%%%%%%%%%%%%%%%%%%%%%%%%%%%%%%

\section{Conclusions}
We study the scattering problem for an anisotropic material with a conductive boundary. We prove that there exists infinitely transmission eigenvalues and that they are at most a discrete set. We then study the dependence of the transmission eigenvalues on the physical parameters and proved that the first transmission eigenvalue is monotone with respect to each of these parameters. We also prove that the transmission eigenvalues converge as the conductive boundary parameter $\eta$ goes to $0$ or $\infty$ in magnitude. Lastly, we provide some numerical examples for three different shapes, validating our theoretical results for monotonicity and convergence. For the L-shaped domain, the convergence result as $\eta \longrightarrow \pm \infty$ is consistent with the theory, though it isn't covered under our assumptions.
\\

\noindent{\bf Acknowledgments:} The research of I. Harris and V. Hughes is partially supported by the NSF DMS Grant 2107891.\\

%%%%%%%%%%%%%%%%%%%%%%%%%%%%%%%%%%%%%%%%%%%%%%%%%%%%%%%%%%%

\end{document}